\documentclass[a4paper, 10pt]{article}
\usepackage[latin1]{inputenc}
\usepackage{amsthm}
\usepackage{amsmath,amsfonts}
\usepackage{color}
\usepackage{subfigure}
\usepackage{graphicx}
\usepackage{bm}
\usepackage{bbm}
\usepackage{epstopdf}
\usepackage{indentfirst}
\usepackage[margin=0.8in]{geometry}
\newtheorem{theorem}{Theorem}[section]
\newtheorem{proposition}{Proposition}[section]
\newtheorem{definition}{Definition}[section]
\newtheorem{lemma}{Lemma}[section]
\newtheorem{notation}{Notation}[section]
\newtheorem{remark}{Remark}[section]

\numberwithin{equation}{section} 

\newcommand\ds{\displaystyle}

\def\e{\varepsilon}
\def\d{\delta}

\def\O{\Omega}

\def\R{{\mathbb{R}}}
\def\Z{{\mathbb{Z}}}
\def\V{{\mathbb{V}}}

\begin{document}
\begin{center}
{\bf \large Homogenization via unfolding in domains separated by the thin layer of the thin beams}
\end{center} 
  \centerline{Georges Griso, Anastasia Migunova, Julia Orlik}

\begin{abstract}
We consider a thin heterogeneous layer consisted of the thin beams (of radius $r$) and we study the limit behavior of this problem as the periodicity $\e$, the thickness $\delta$ and the radius $r$ of the beams tend to zero. The decomposition of the displacement field in the beams developed in \cite{griso} is used, which allows to obtain a priori estimates. Two types of the unfolding operators are introduced to deal with the different parts of the decomposition. In conclusion we obtain the limit problem together with the transmission conditions across the interface.
\end{abstract}

\section{Introduction}
In this paper a system of elasticity equations in the domains separated by a thin heterogeneous layer is considered. The layer is composed of periodically distributed vertical thin, compared to their length, beams, whose diameter and height tend to zero together with the period of the structure. The structure is clamped on the bottom. We consider the case of an isotropic linearized elasticity system.

The elasticity problems involving thin layers of periodic fiber--networks appear in many technical applications, where special constraints on stiffness of technical textiles or composites are required, depending on a type of the application. For example, drainage and protective wear, working for outer--plane compression, should provide certain stiffness against external mechanical loading. 

Thin layers were considered in number of papers (see e.g. \cite{onofr, nrj, geym, past, zhpast, pan}). In particular, \cite{onofr} deals with a layer composed of the holes scaled with additional small parameter; \cite{nrj, geym} consider a case of a soft layer, whose stiffness is scaled by the thickness of the layer. Thin beams and their junction with 3D structures were  studied in \cite{griso, curved, griso2, griso3}: \cite{griso} deals with the homogenization of a single thin body; in \cite{curved} a structure made of these bodies is considered. \cite{griso2}, \cite{griso3} study the limit behavior of structures composed of rods in junction with a plate.

Our problem contains  3 small parameters: the thickness $\d$ of the layer (and the height of the beams at the same time), the radius $r$ of the rods and the period of the layer $\e$. Obviously, the choice of an appropriate scaling of the problem defines what limit will be reached. For example, in \cite{past, zhpast, pan} 3D periodic fiber--networks were considered and it is investigated, that if $r$ is of the order $\e^2$ the bending moments in beams enter the homogenized macroscopic equation as micro--polar rotational degrees of freedom.

Considering the structure made of thin beams the first difficulty arises when we obtain estimates on the displacements. To overcome this problem, we use the decomposition of thin beam's displacement into a displacement of a mean line and a rotation of its cross--section, introduced in \cite{griso}. After deriving the estimates on the decomposed components, we get bounds for the minimizing sequence which depend on $\e, r, \d$. 3 critical cases were obtained with different ratios between small parameters. Two of them are considered in the present paper and lead to the same kind of the limit problem. The third one corresponds no longer to the thin beams but to the small inclusions and therefore is not studied in the present paper. 

In order to obtain the limit problem, periodic unfolding method,\cite{unfold}, is applied to the components of the decomposition. Two additional types of unfolding operators are introduced to deal with the mean displacements and rotations, depending only on component $x_3$, and the warping depending on all $(x_1, x_2, x_3)$. In the limit, a 3D elasticity problem for two domains is obtained, where the domains are separated by the interface with an inhomogeneous Robin--type condition. The coefficients in the Robin condition are obtained from an auxiliary 1D bending problem for a beam. An important result is that the displacements are continuous in a direction normal to the interface and have a jump only in a tangential direction.

The paper is organized as follows. In Section 2, geometry and weak and strong formulations of the problem are introduced. Section 3 presents decomposition of a single beam and the preliminary estimates. Section 4 is devoted to derivation of a priori estimate in all subdomains of $\Omega_{r, \e, \d}$. In Section 5, the periodic unfolding operators are introduced and their properties are defined. Also the limit fields for the beams based on the estimates from  Section 4 are defined. Section 6 deals with passing to the limit and obtaining the variational formulation for the limit problem. In Section 7, the results are summarized: the strong formulation for the limit problem is given and the final result on the convergences of the solutions is introduced. Section 8 contains additional information. Section 9 provides an auxiliary lemma, used in the proofs.

\section{The statement of the problem}
\subsection{Geometry}
In the Euclidean space $\R^2$ let $\omega$ be a connected domain with Lipschitz boundary and let $L > 0$ be a fixed real number. Define the reference domains: 
\begin{equation*}
\begin{aligned}
&\Omega^- &= &\, \omega \times ( -L, 0 ),&\\
&\Omega^+ &= &\, \omega \times ( 0, L ),&\\
&\Sigma &= &\, \omega \times \{ 0 \}.&
\end{aligned}
\end{equation*}
Moreover, $\Omega$ (see Figure \ref{limfig}b) is defined by 
\begin{equation}
\Omega = \Omega^+ \cup \Omega^- \cup \Sigma = \omega \times (-L, L).
\end{equation}

For the domains corresponding to the structure with the layer of thickness $\delta$ introduce the following notations:
\begin{equation*}
\begin{aligned}
&\Omega^+_{\delta} &= &\, \omega \times ( \delta, L ),&\\
&\Sigma^+_{\delta} &= &\, \omega \times \{ \delta \}.&
\end{aligned}
\end{equation*}

In order to describe the configuration of the layer, for any  $(d,r)\in (0,+\infty)^2$  we define the rod $B_{r,d}$ by
$$
B_{r, d} = D_r \times (0, d)$$ where $D_r = D(O , r)$ is the disc of center $O$ and radius $r$.
\smallskip

The set of  rods is
\begin{equation}
\Omega_{r, \e,\delta}^i = \bigcup\limits_{{\bf i}\in \widehat{\Xi}_\e\times\{0\}} \left\{ x\in \R^3\;\;|\;\; x \in {\bf i}\varepsilon+B_{r, \delta}\right\},
\end{equation}
where
\begin{equation}
\widehat{\Xi}_\varepsilon=\Big\{ \xi \in {\mathbb Z}^2\;\; |\; \varepsilon(\xi +Y)\subset \omega\Big\}, \quad Y =\left( -\frac{1}{2}; \frac{1}{2}\right)^2.
\end{equation}
Moreover, we set:
\begin{equation}
\widehat{\omega}_\e = \hbox{interior}\bigcup_{{\bf i}\in \widehat{\Xi}_\e}\e\big({\bf i}+\overline{Y}\big).
\end{equation}

The physical reference configuration (see Figure \ref{limfig}a) is defined by $\Omega_{r, \e,\delta}$:
\begin{equation}
\Omega_{r, \e,\delta}=\hbox{interior}\Big(\overline{\Omega^-}\cup\overline{\Omega_{r, \e,\delta}^i }\cup \overline{\Omega^+_\delta }\Big).
\end{equation}

\begin{figure}[ht!]
\centering
\subfigure[The domain with the thin layer]{\includegraphics[width=0.43\textwidth]{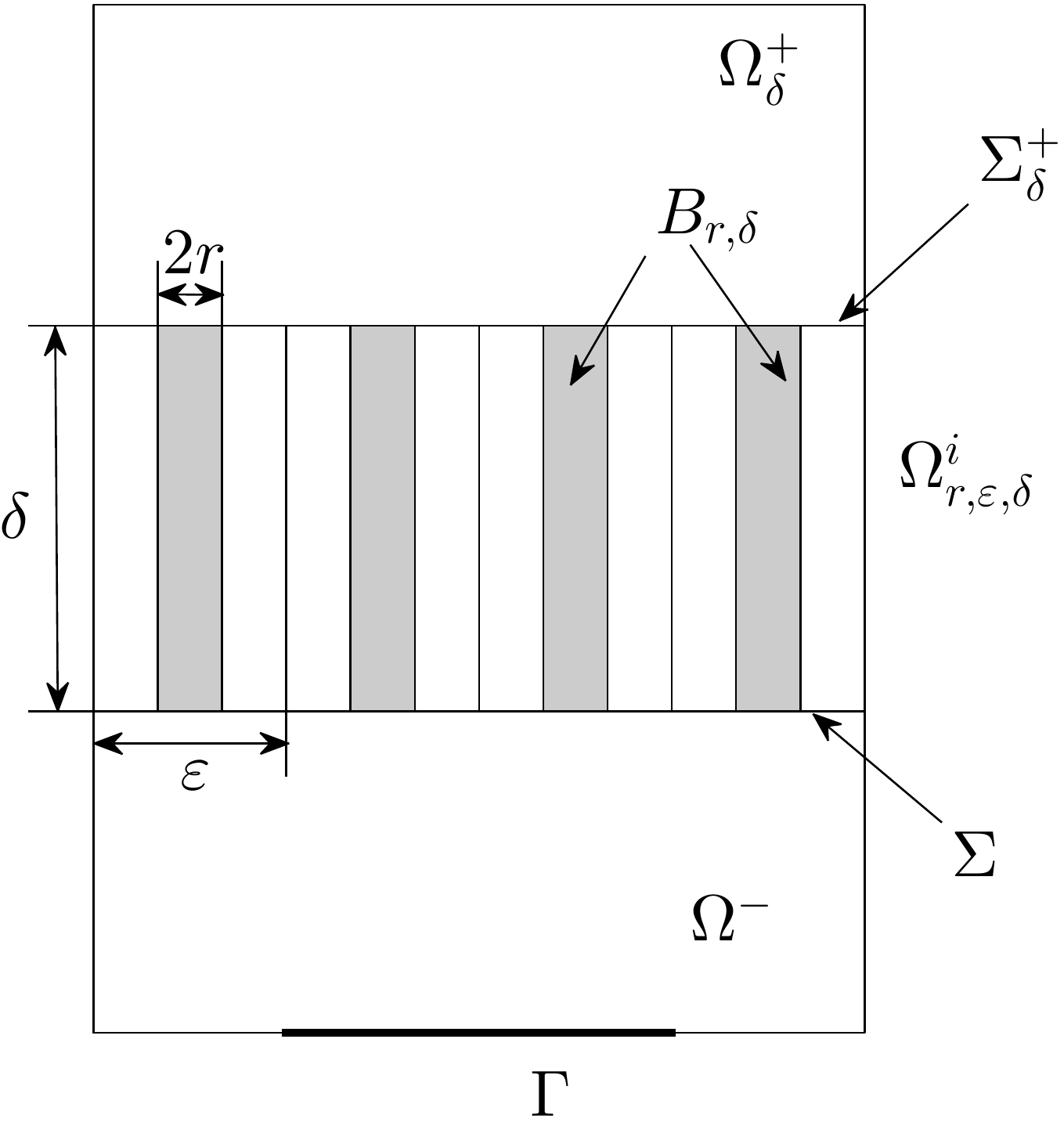}}
\hspace{2cm}
\subfigure[The limit problem]{\includegraphics[width=0.38\textwidth]{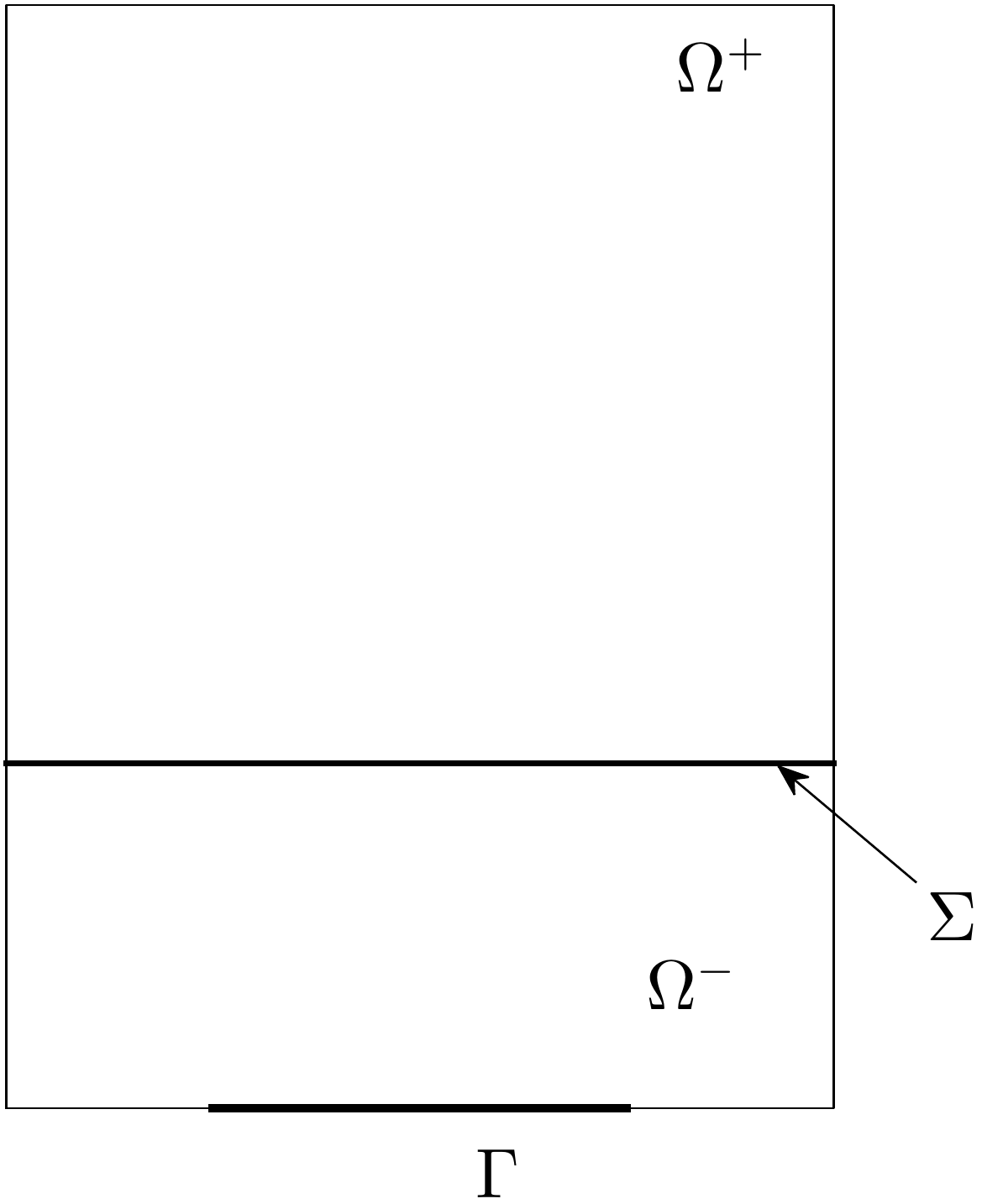}}
\caption{The reference configuration}
\label{limfig}
\end{figure}

The structure is fixed on a part $\Gamma$ with non null measure of the boundary $\partial \Omega^-\setminus \Sigma$.
\medskip

We make the following assumptions:
\begin{equation}
\label{initassump}
r < \frac{\e}{2}, \qquad \frac{r}{\delta} \leq C.
\end{equation}
Here, the first assumption \eqref{initassump}$_1$ is a non penetration condition for the beams while with the second one, we want  to eliminate the case $\displaystyle \frac{\delta}{r} \to 0$ which needs the use of tools for   plates (see \cite{griso}).

\subsection{Strong formulation}\label{SF}
Choose an isotropic material with Lam\'e constants $\lambda^m, \mu^m$ for the beams and another isotropic material with Lam\'e constants $\lambda^b, \mu^b$ for $\O^-$ and $\O^+_\d$. Then we have the following values for the Poisson's coefficient of the material and Young's modulus:
\begin{gather*}
 \nu^m = \frac{\lambda^m}{2(\lambda^m + \mu^m)}, \quad \nu^b = \frac{\lambda^b}{2(\lambda^b + \mu^b)},\\
 E^m = \frac{\mu^m (3 \lambda^m + 2\mu^m)}{\lambda^m + \mu^m}, \quad E^b = \frac{\mu^b (3 \lambda^b + 2\mu^b)}{\lambda^b + \mu^b}.
\end{gather*}

The symmetric deformation field is defined by
$$
(\nabla u)_S = \frac{\nabla u + \nabla^{T} u}{2}.
$$ 

The Cauchy stress tensor in $\Omega_{r, \e, \delta}$ is linked to $(\nabla u_{r, \e, \delta})_S$ through the standard Hooke's law:
\begin{gather*}
 \sigma_{r, \e, \delta} = \left \{
\begin{array}{ll}
 \lambda^b (\mathrm{Tr} \, (\nabla u_{r, \e, \delta})_S) I + 2 \mu^b (\nabla u_{r, \e, \delta})_S &\quad \text{ in } \Omega^- \cup \Omega_{\delta}^+,\\
 \lambda^m (\mathrm{Tr} \, (\nabla u_{r, \e, \delta})_S) I + 2 \mu^m (\nabla u_{r, \e, \delta})_S &\quad \text{ in } \Omega_{r, \e, \delta}^i.
\end{array}
 \right.
\end{gather*}

We consider the standard linear equations of elasticity in $\Omega_{r, \e,\delta}$. The unknown displacement $u_{r, \e,\delta} : \Omega_{r, \e,\delta} \rightarrow \R^3$ satisfies the following problem:
\begin{equation}
\label{strf}
\left\{
\begin{array}{ll}
 \nabla \cdot \sigma_{r, \e, \delta}= -f_{r, \e,\delta} & \text{in } \Omega_{r, \e,\delta}, \\
 u_{r, \e,\delta} = 0 & \text{on } \Gamma,\\
 \sigma_{r, \e, \delta} \cdot \nu = 0 & \text{on } \partial \Omega_{r, \e,\delta} \setminus \Gamma.
 \end{array}
 \right.
\end{equation}

\subsection{Weak formulation}
If $\V_{r,\e,\d}$ denotes the space
\begin{equation*}
\V_{r,\e,\d} = \left\{v \in H^1(\Omega_{r, \e,\delta},\R^3) \;|\; v = 0 \text{ on } \Gamma \right\},
\end{equation*}
the variational formulation of \eqref{strf} is
\begin{equation}
\label{wf}
\left\{
\begin{array}{l}
\text{Find } u_{r, \e,\delta} \in \V_{r,\e,\d},\\[3mm]
\ds \int_{\Omega_{r, \e,\delta}} \sigma_{r, \e, \delta} : (\nabla \varphi)_S dx = \int_{\Omega_{r, \e,\delta}} f_{r, \e,\delta} \cdot \varphi dx, \quad \forall \varphi \in \V_{r,\e,\d}.
\end{array}
\right.
\end{equation}

Throughout the paper and for any $v\in \V_{r,\e,\d}$ we denote by
$$
\begin{aligned}
 \sigma(v) = \lambda(\mathrm{Tr} \, (\nabla v)_S) I + 2 \mu (\nabla v)_S =
 \left \{
\begin{array}{ll}
 \lambda^b (\mathrm{Tr} \, (\nabla v)_S) I + 2 \mu^b (\nabla v)_S &\quad \text{ in } \Omega^- \cup \Omega_{\delta}^+,\\[2mm]
 \lambda^m (\mathrm{Tr} \, (\nabla v)_S) I + 2 \mu^m (\nabla v)_S &\quad \text{ in } \Omega_{r, \e, \delta}^i.
\end{array}
 \right.
\end{aligned}
$$
and 
$${\cal E}(v)=\int_{\Omega_{r, \e,\delta}}\sigma(v): (\nabla v)_S\, dx$$ 
the total elastic energy of the displacement $v$. Indeed choosing $v=u_{r, \e,\delta}$  in \eqref{wf} leads
to the usual energy relation
\begin{equation}\label{(2.9)}
{\cal E}(u_{r, \e,\delta})=\int_{\Omega_{r, \e,\delta}} f_{r, \e,\delta} \cdot u_{r, \e,\delta}\, dx.
\end{equation}

We equip the space $\V_{r, \e, \delta}$ with the following norm:
\begin{equation*}
\| u \|_V = \| (\nabla u)_S \|_{L^2 (\Omega_{r, \e, \delta})}.
\end{equation*}
It follows from the 3D--Korn inequality for domain $\Omega^-$:
\begin{equation}\label{Eq19}
\| u \|_{H^1 (\Omega^-)} \leq C \| (\nabla u)_S \|_{L^2 (\Omega^-)}.
\end{equation}

\section{Decomposition of the displacements in $\Omega_{r, \e,\delta}^i$}
\subsection{Displacement of a single beam.  Preliminary estimates}
To obtain a priori estimates on $u_{r, \e,\delta}$ and $(\nabla u_{r, \e,\delta})_S$ we will need Korn's inequalities for this type of domain. However, for a multi-structure like this, it is not convenient to estimate the constant in a Korn's type inequality, because the order of each component of the displacement field may be very different. To overcome this difficulty, we will use a decomposition for the displacements of beams. A displacement of the beam $B_{r, d}$  is decomposed as the sum of three fields,  the first one  stands for the displacement of the center line, the second stands for the rotations of the cross sections  and the last one is the warping, it takes into account the deformations of the cross sections.

We recall the definition of the elementary displacement from \cite{griso}.
\begin{definition}
\label{eldispl}
The elementary displacement $U_e$, associated to $u \in L^1 (B_{r, d} , \R^3)$, is given by
\begin{equation}
\label{d1}
U_{e} (x_1, x_2, x_3) = \mathcal U(x_3) + \mathcal R(x_3) \wedge (x_1 e_1 + x_2 e_2), \quad \hbox{for a.e. }\; x=(x_1, x_2,x_3) \in B_{r,d},
\end{equation}
where 
\begin{equation}
\label{d3}
\left\{
\begin{aligned}
&\mathcal U = \frac{1}{\pi r^2} \int_{D_r} u(x_1, x_2, \cdot)\, dx_1 dx_2,\\
&\mathcal R_3 = \frac{2}{\pi r^4} \int_{D_r} \left(x_1u_2(x_1, x_2, \cdot)-x_2 u_1(x_1, x_2, \cdot)  \right)  \,dx_1 dx_2,\\
&\mathcal R _{\alpha} = \frac{4(-1)^{3-\alpha}}{\pi r^4} \int_{D_r} x_{3-\alpha}u_3(x_1, x_2, \cdot) \, dx_1 dx_2,\quad \alpha=1,2.
\end{aligned}
\right.
\end{equation}
We write 
\begin{equation}
\label{d2}
\bar u = u - U_e.
\end{equation}
The displacement $\bar u$ is the warping. Note that
\begin{equation}\label{prop-warp}
\begin{aligned}
&\int_{D_r} \bar u(x_1, x_2, \cdot)\,dx_1 dx_2=0,\\
&\int_{D_r} \left(x_1\bar u_2(x_1, x_2, \cdot)-x_2 \bar u_1(x_1, x_2, \cdot)  \right)\,dx_1 dx_2=0,\\
&\int_{D_r} x_1\bar u_3(x_1, x_2, \cdot) \; dx_1 dx_2=\int_{D_r} x_2\bar u_3(x_1, x_2, \cdot)\,dx_1 dx_2=0.
\end{aligned}
\end{equation}
\end{definition}

The following theorem is proved in \cite{griso}.
\begin{theorem}
\label{th1}
Let $u$ be in $H^1(B_{r, d} ; \R^3)$  and $u = U_e + \bar u$ the decomposition of $u$ given by  \eqref{d1}--\eqref{d2}. There exists a constant $C$ independent of $d$ and  $r$ such that the following estimates hold:
\begin{gather}
\label{eq6}
\begin{aligned}
&\| \bar u \|_{L^2 (B_{r, d})} \leq C r \| (\nabla u)_S \|_{L^2 (B_{r, d})}, \qquad \| \nabla \bar u \|_{L^2 (B_{r, d})} \leq C \| (\nabla u)_S \|_{L^2 (B_{r, d})},\\
&\left \| \frac{d\mathcal R}{dx_3} \right\|_{L^2(0, d)} \leq \frac{C}{r^2} \| (\nabla u)_S\|_{L^2(B_{r, d})},\\
&\left \| \frac{d \mathcal U}{dx_3} - \mathcal R \wedge e_3 \right\|_{L^2(0, d)} \leq \frac{C}{r} \| (\nabla u)_S\|_{L^2(B_{r, d})}.
\end{aligned}
\end{gather}
\end{theorem}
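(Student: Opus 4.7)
The plan is to prove the four estimates in two stages: first establish the pair of warping bounds in the top line of \eqref{eq6}, then extract the bounds on $d\mathcal R/dx_3$ and on $d\mathcal U/dx_3-\mathcal R\wedge e_3$ by inverting the moment formulae \eqref{d3}. The starting point is a slice-by-slice analysis on each cross section $D_r$ at height $x_3$. The orthogonality relations \eqref{prop-warp} make $(\bar u_1,\bar u_2)(\cdot,\cdot,x_3)$ orthogonal in $L^2(D_r)$ to planar rigid motions, so a 2D Korn inequality on $D_r$ (with the factor $r$ obtained by scaling from the unit disc) controls both its $L^2$-norm and its in-plane gradient by the in-plane symmetric gradient. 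A direct computation from \eqref{d1} shows $(\nabla U_e)_{S,\alpha\beta}=0$ for $\alpha,\beta\in\{1,2\}$, so this in-plane symmetric gradient coincides with the corresponding block of $(\nabla u)_S$, and integration in $x_3$ gives the $(1,2)$-portion of the warping bounds. For $\bar u_3$, the last two identities of \eqref{prop-warp} give $L^2(D_r)$-orthogonality to affine functions and hence $\|\bar u_3\|_{L^2(D_r)}\leq Cr\|\nabla_{(x_1,x_2)}\bar u_3\|_{L^2(D_r)}$ by Poincar\'e-Wirtinger; since $\partial_\alpha\bar u_3$ is not itself a symmetric-gradient component one uses the identity $\partial_\alpha\bar u_3 = 2(\nabla\bar u)_{S,\alpha 3}-\partial_3\bar u_\alpha$ and closes the coupling between $\|\bar u\|_{L^2}$ and $\|\nabla\bar u\|_{L^2}$ simultaneously, or equivalently applies a 3D Korn to $\bar u$ on $B_{r,d}$ in which the six slice-wise orthogonality relations make the Korn constant uniform in $r,d$.

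Once $\bar u$ is under control, the remaining two estimates follow from the moment formulae. Differentiating the third line of \eqref{d3} in $x_3$ and using $\partial_3 U_{e,3} = (\nabla u)_{S,33} - \partial_3 \bar u_3$ one gets
\[
-\tfrac{\pi r^4}{4}\mathcal R'_2(x_3) = \int_{D_r} x_1\big((\nabla u)_{S,33}-\partial_3\bar u_3\big)\,dx_1 dx_2,
\]
and Cauchy-Schwarz on $D_r$ combined with the gradient bound on $\bar u$ yields $\|\mathcal R'_2\|_{L^2(0,d)}\leq Cr^{-2}\|(\nabla u)_S\|_{L^2(B_{r,d})}$; the components $\mathcal R'_1$ and $\mathcal R'_3$ are obtained analogously from the moment $\int x_2\partial_3 U_{e,3}$ and from $\int(x_1\partial_3 U_{e,2}-x_2\partial_3 U_{e,1})$. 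For the last estimate, averaging $(\nabla U_e)_{S,13}$ over $D_r$ produces $(\mathcal U'_1-\mathcal R_2)/2$ (the $\mathcal R'_3 x_2$ term integrates out by symmetry of the disc); writing $(\nabla U_e)_{S,13} = (\nabla u)_{S,13}-(\nabla\bar u)_{S,13}$, Cauchy-Schwarz and the gradient bound on $\bar u$ yield the required bound with factor $1/r$. The components $\mathcal U'_2+\mathcal R_1$ and $\mathcal U'_3$ are handled identically via $(\nabla U_e)_{S,23}$ and $(\nabla U_e)_{S,33}$.

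The hard part will be the Korn closure in the first stage: the classical 3D Korn constant on the thin beam $B_{r,d}$ is not uniform in $(r,d)$, so uniformity can only come from the fact that $\bar u$ satisfies six orthogonality conditions on every cross section. A clean way to prove the required uniform Korn for the warping is to rescale to the beam $B_{1,d/r}$ and verify that the slice-wise orthogonality kills precisely the slow bending/translation modes of the beam that would otherwise cause the Korn constant to blow up, leaving a constant depending only on the shape of the disc (and hence independent of $r$ and $d$).
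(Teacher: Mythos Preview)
The paper does not actually prove this theorem: it states the result and immediately says ``The following theorem is proved in \cite{griso}.'' So there is no in-paper argument to compare your proposal against; the reference is to Griso's 2008 paper on decompositions of thin-structure displacements.

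Your outline is in the right spirit and the second stage is essentially correct. Two small simplifications: for $\mathcal R'_1,\mathcal R'_2$ you do not need to pass through $U_e$ or the warping at all, since differentiating the moment formula gives directly $\mathcal R'_2=-\tfrac{4}{\pi r^4}\int_{D_r}x_1(\nabla u)_{S,33}$ (because $\partial_3u_3=(\nabla u)_{S,33}$), and Cauchy--Schwarz finishes. For $\mathcal R'_3$ one uses $\partial_3 u_\alpha=2(\nabla u)_{S,\alpha 3}-\partial_\alpha u_3$ and observes that the angular-derivative contribution $\int_{D_r}(x_1\partial_2u_3-x_2\partial_1u_3)$ vanishes after integration by parts on the disc (the boundary term is zero because $x_1n_2-x_2n_1=0$ on $\partial D_r$). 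So the $\mathcal R'$ bound is independent of the warping bound, contrary to your ordering. The estimate on $\mathcal U'-\mathcal R\wedge e_3$, on the other hand, does genuinely use the warping gradient bound, as you indicate.

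The real content is your first stage, and here the proposal is only a plan, not a proof. You assert that after rescaling to $B_{1,d/r}$ the six slice-wise orthogonality relations ``kill precisely the slow bending/translation modes'' so that a Korn constant independent of $d/r$ survives. This is the correct heuristic---the bad modes on a long beam are the Bernoulli--Navier bendings $u_\alpha=f(x_3)$, $u_3=-f'(x_3)x_\alpha$ and the torsion $u=\mathcal R_3(x_3)(-x_2,x_1,0)$, and each of these violates one of the orthogonality conditions---but turning that heuristic into a uniform estimate requires work you have not sketched. The argument in \cite{griso} does this concretely: it partitions the beam into cubes $D_r\times(kr,(k{+}1)r)$, applies the standard Korn inequality on each cube to produce a rigid motion, controls the jumps between consecutive rigid motions by the symmetric gradient on overlapping cubes, and then compares this piecewise-rigid field to $U_e$ using the moment definitions. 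That chaining argument is what makes the constant independent of the aspect ratio $d/r$, and it is the step your proposal leaves open.
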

We set 
$$  Y_{\e} = \e Y, \quad V_{\e} = Y_{\e} \times (-\e, 0),\quad B'_{r,\e}=D_r\times (-\e,0),\quad V'_{r,\e,\d}=V_{\e}\cup D_r\times (-\e,\d).$$

\begin{lemma}\label{lem1}
Let $u$ be in $H^1(V'_{r,\e,\d} , \R^3)$ and $u = U_e + \bar u$ the decomposition of the restriction of $u$ to the rod $B'_{r,\e}$ given by  \eqref{d1}--\eqref{d2}. There exists a constant $C$ independent of $\d$, $\e$ and  $r$ such that the following estimates hold:
\begin{gather}
\label{Eq25}
\begin{aligned}
&| \mathcal R (0)|^2\leq \frac{C}{r^3} \| \nabla u \|^2_{L^2 (V_{\e})},\\
&\| \mathcal R \|^2_{L^2 (0, \delta)} \leq C \frac{\delta}{r^3} \| \nabla u \|^2_{L^2 (V_{\e})} + C \frac{\delta^2}{r^4} \| (\nabla u)_S \|^2_{L^2 (B_{r, \delta})},\\
&\left\| \frac{d \mathcal U_{\alpha}}{dx_3} \right\|_{L^2 (0, \delta)}^2 \leq C \frac{\delta}{r^3} \| \nabla u \|^2_{L^2 (V_{\e})} + C \frac{\delta^2}{r^4} \| (\nabla u)_S \|^2_{L^2 (B_{r, \delta})},\\
&\| \mathcal U_3 - \mathcal U_3 (0)\|^2_{L^2(0, \delta)} \leq C \frac{\delta^2}{r^2} \| (\nabla u)_S \|^2_{L^2(B_{r, \delta})},\\
&\| \mathcal U_{\alpha} - \mathcal U_{\alpha} (0)\|^2_{L^2 (0, \delta)} \leq C \frac{\delta^3}{r^3} \| \nabla u \|^2_{L^2 (V_{\e})} + C \frac{\delta^4}{r^4} \| (\nabla u)_S \|^2_{L^2 (B_{r, \delta})},\\
&\| u_{\alpha}(\cdot, \cdot, 0) - \mathcal U_{\alpha}(0) \|^2_{L^2 (Y_{\e})} \leq C \e \| \nabla u \|^2_{L^2 (V_{\e})} + C \frac{\e^2}{r} \| (\nabla u)_S \|^2_{L^2 (V_{\e})},\\
&\| u_{3}(\cdot, \cdot, 0) - \mathcal U_{3}(0) \|^2_{L^2 (Y_{\e})} \leq C \e \| \nabla u_3 \|^2_{L^2 (V_{\e})} + C \frac{\e^2}{r} \| (\nabla u)_S \|^2_{L^2 (V_{\e})}.
\end{aligned}
\end{gather}
\end{lemma}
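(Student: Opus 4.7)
The plan is to treat the first estimate on $|\mathcal R(0)|^2$ as the new ingredient, derive estimates two through five from it by routine manipulations, and handle the last two (the trace estimates on $Y_\e$) with one additional Poincar\'e-between-square-and-disc argument.

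For the first estimate I would apply Korn's inequality on the cube $V_\e$ to extract a rigid motion $R_\e(x)=a_\e+\omega_\e\wedge x$ satisfying
\[
\|\nabla(u-R_\e)\|_{L^2(V_\e)}+\frac{1}{\e}\|u-R_\e\|_{L^2(V_\e)}\le C\|(\nabla u)_S\|_{L^2(V_\e)}.
\]
The identity $\|\nabla R_\e\|^2_{L^2(V_\e)}\sim |\omega_\e|^2\e^3$ combined with the triangle inequality yields $|\omega_\e|^2\le(C/\e^3)\|\nabla u\|^2_{L^2(V_\e)}\le(C/r^3)\|\nabla u\|^2_{L^2(V_\e)}$ using the hypothesis $r<\e/2$. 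Since $R_\e$ is a rigid motion, its beam decomposition on $B'_{r,\e}$ satisfies $\mathcal R(R_\e)\equiv\omega_\e$, hence $\mathcal R(u)(0)=\omega_\e+\mathcal R(u-R_\e)(0)$; it remains to estimate the remainder by the same right-hand side. For this I would combine the integration-by-parts identity
\[
\mathcal R_3(x_3)=\frac{1}{\pi r^4}\int_{D_r}(r^2-|x|^2)(\partial_1 u_2-\partial_2 u_1)(x_1,x_2,x_3)\,dx_1dx_2
\]
(and its analogues for $\mathcal R_1,\mathcal R_2$) with a trace inequality on the interval $(-\e,0)$, the bound $\|d\mathcal R/dx_3\|^2_{L^2(-\e,0)}\le(C/r^4)\|(\nabla u)_S\|^2_{L^2(B'_{r,\e})}$ from Theorem \ref{th1}, and the Korn $L^2$-bound on $u-R_\e$.

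Estimates two through five then follow in sequence: for the second, write $\mathcal R(x_3)=\mathcal R(0)+\int_0^{x_3}\mathcal R'(s)\,ds$, take $L^2(0,\d)$-norms, and plug in the first estimate together with Theorem \ref{th1} applied to $B_{r,\d}$; for the third split $d\mathcal U_\alpha/dx_3=(\mathcal R\wedge e_3)_\alpha+(d\mathcal U/dx_3-\mathcal R\wedge e_3)_\alpha$ and bound each summand by the second estimate and Theorem \ref{th1}, absorbing the residual $C/r^2$ coefficient into $C\d^2/r^4$ via $r\le C\d$; the fourth is simpler because $(\mathcal R\wedge e_3)_3=0$; the fifth follows from the third by Poincar\'e on $(0,\d)$. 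For the last two I would again decompose $u=R_\e+v$ on $V_\e$: the rigid-motion piece $R_{\e,\alpha}(x_1,x_2,0)-\mathcal U^{R_\e}_\alpha(0)$ equals $\pm\omega_{\e,3}\,x_{3-\alpha}$, whose $L^2(Y_\e)$-norm squared is bounded by $C|\omega_\e|^2\e^4\le C\e\|\nabla u\|^2_{L^2(V_\e)}$, producing the first right-hand-side term; the $v$-piece is controlled by a Poincar\'e inequality comparing functions on $Y_\e$ to their average over the small disc $D_r$ (whose constant has the characteristic scale $\e^2/r$ due to the area ratio $\pi r^2/\e^2$), applied slice by slice on $V_\e$ and combined with a trace from $V_\e$ to $Y_\e\times\{0\}$. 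The principal obstacle throughout is the sharp bound on the remainder $|\mathcal R(u-R_\e)(0)|^2$: a naive trace argument yields an extra factor $\e/r$ beyond the claimed $1/r^3$, and closing this gap requires careful pairing of Korn's $L^2$-estimate with the integration-by-parts formula for $\mathcal R$.
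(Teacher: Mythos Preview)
Your plan is correct, but the route you take for the first estimate is needlessly roundabout compared with the paper's, and the ``principal obstacle'' you flag is in fact not there.

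For $|\mathcal R(0)|^2$ the paper never invokes Korn on $V_\e$. It works directly on the extended rod $B'_{r,\e}=D_r\times(-\e,0)$: the 2D Poincar\'e--Wirtinger inequality on the disc gives $\|u-\mathcal U\|_{L^2(B'_{r,\e})}\le Cr\|\nabla u\|_{L^2(B'_{r,\e})}$, whence by Cauchy--Schwarz in the defining integrals one obtains $\|\mathcal R\|_{L^2(-\e,0)}\le (C/r)\|\nabla u\|_{L^2(B'_{r,\e})}$ and $\|d\mathcal R/dx_3\|_{L^2(-\e,0)}\le (C/r^2)\|\nabla u\|_{L^2(B'_{r,\e})}$. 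A one-dimensional trace inequality with window $a=r$ then gives $|\mathcal R(0)|^2\le (C/r^3)\|\nabla u\|^2_{L^2(B'_{r,\e})}\le (C/r^3)\|\nabla u\|^2_{L^2(V_\e)}$. No rigid-motion subtraction, no integration by parts, no gap to close. Your approach via $u=R_\e+v$ does also work, but the remainder $|\mathcal R(v)(0)|^2$ is bounded by exactly the same three-line argument applied to $v$ (using the \emph{gradient} Korn bound $\|\nabla v\|_{L^2(V_\e)}\le C\|(\nabla u)_S\|_{L^2(V_\e)}$, not the $L^2$ one you mention), so the detour through $R_\e$ buys nothing here.

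For the last two estimates the two approaches essentially coincide, though the paper makes the mechanism explicit: Korn on $V_\e$ produces $\mathbf r=\mathbf a+\mathbf b\wedge(x+\tfrac{\e}{2}e_3)$; the scale mismatch between $D_r$ and $Y_\e$ is handled by the Sobolev trace $H^1(V_\e)\hookrightarrow L^4(Y_\e)$ followed by H\"older on $D_r$, which yields $\bigl|\mathcal U(0)-\mathbf a-\mathbf b\wedge\tfrac{\e}{2}e_3\bigr|\le Cr^{-1/2}\|(\nabla u)_S\|_{L^2(V_\e)}$ and hence the $\e^2/r$ term after multiplying by $|Y_\e|=\e^2$. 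Your ``Poincar\'e between square and disc with constant $\e^2/r$'' is exactly this $L^4$ argument in disguise; stating it that way is fine, but the $L^4$ embedding is what produces the correct power of $r$ in two dimensions.
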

\begin{proof} Applying the 2D-Poincar\'e-Wirtinger's inequality we obtain the following estimate:
\begin{equation}
\label{eq26}
\| u - \mathcal U \|_{L^2 (B'_{r,\e})} \leq Cr \| \nabla u \|_{L^2 (B'_{r,\e})}
\end{equation}
The constant does not depend on $r$ and $\e$.
\medskip

\noindent{\it Step 1. } Estimate of $\mathcal R(0)$.
\smallskip

 Recalling the definition of $\mathcal R$ from \eqref{d3} and since $\displaystyle \int_{D_r} x_1 dx_1 dx_2 = \int_{D_r}  x_2 dx_1 dx_2 =0$, we can write 
\begin{equation*}
\forall x_3\in [-\e,0], \qquad \mathcal R_1 (x_3) = \frac{4}{\pi r^4} \int_{D_r} x_2\big(u_3(x) - \mathcal U_3(x_3)\big) dx_1 dx_2.
\end{equation*}
By Cauchy's inequality
\begin{equation*}
\begin{aligned}
\forall x_3\in [-\e,0], \qquad |\mathcal R_1(x_3)|^2 &\leq \frac{16}{\pi^2r^8} \int_{D_r} x^2 dx_1dx_2 \times \int_{D_r} (u_3(x) - \mathcal U_3(x_3))^2 dx_1dx_2  \\
&\leq \frac{C}{r^4} \int_{D_{r}} (u_3(x) - \mathcal U_3(x_3))^2 dx_1 dx_2.
\end{aligned}
\end{equation*}
Integrating with respect to $x_3$  gives
\begin{equation*}
\int_{-\e}^0 |\mathcal R_1(x_3)|^2 dx_3 \leq \frac{C}{r^4} \int_{B_{r,\e}} (u(x) - \mathcal U(x_3))^2 dx.
\end{equation*}
Using \eqref{eq26} we can write
\begin{equation}\label{R1L2}
\| \mathcal R_1 \|_{L^2 (-\e,0)} \leq \frac{C}{r} \| \nabla u \|_{L^2 (B'_{r,\e})}.
\end{equation}
The derivative of $\mathcal R_1$ is equal to $\displaystyle \frac{d \mathcal R_1}{dx_3} (x_3) = \frac{4}{\pi r^4} \int_{D_r} x_2  \frac{\partial u_3(x)}{\partial x_3} dx_1 dx_2$ for a.e. $x_3\in (-\e,0)$. Then proceeding as above we obtain for a.e. $x_3\in (-\e,0)$
$$
\left| \frac{d \mathcal R_1}{dx_3} (x_3) \right|^2 \leq \frac{C}{r^4} \int_{D_r} \left|\frac{\partial u_3(x)}{\partial x_3} \right|^2 dx_1 dx_2.
$$
Hence
\begin{equation}\label{dR1L2}
\left\| \frac{d \mathcal R_1}{dx_3} \right\|_{L^2(-\e,0)} \leq \frac{C}{r^2} \left\|\frac{\partial u_3}{\partial x_3} \right\|_{L^2(B'_{r,\e})}\leq \frac{C}{r^2} \| \nabla u \|_{L^2 (B'_{r,\e})}.
\end{equation}
We recall the following classical estimates for $\phi\in H^1(-a,0)$ ($a>0$)
\begin{equation}\label{REC}
\begin{aligned}
&|\phi(0)|^2\le \frac{2}{a}\|\phi\|^2_{L^2(-a,0)}+\frac{a}{2}\|\phi'\|^2_{L^2(-a,0)},\\
&\|\phi\|^2_{L^2(-a,0)}\le {2 a}|\phi(0)|^2+{a^ 2}\|\phi'\|^2_{L^2(-a,0)}.
\end{aligned}
\end{equation}
Due to \eqref{R1L2}-\eqref{dR1L2}, \eqref{REC}$_1$ with $a=r$ and since $\e > r$ that gives  for $\mathcal R_1(0)$
\begin{gather*}
| \mathcal R_1 (0)|^2 \leq \frac{C}{r^3} \| \nabla u \|^2_{L^2 (B'_{r,\e})}. 
\end{gather*}
The estimates for $\mathcal R_2(0)$, $\mathcal R_3(0)$ are obtained in the same way.  Hence we get \eqref{Eq25}$_1$.
\medskip

\noindent{\it Step 2. } Estimate of $\| \mathcal R \|_{L^2 (0, \delta)}$.
\smallskip

The Poincar\'e's inequality leads to
$$\| \mathcal R - \mathcal R(0)\|_{L^2 (0, \delta)} \leq \delta \left \| \frac{d \mathcal R}{dx_3} \right\|_{L^2(0, \delta)}.$$
From \eqref{eq6}$_3$, \eqref{REC}$_2$ and \eqref{Eq25}$_1$ we get
\begin{equation}
\label{estr}
\| \mathcal R \|^2_{L^2 (0, \delta)} \leq 2\delta | \mathcal R(0) |^2 + \delta^2 \left \| \frac{d \mathcal R}{dx_3} \right\|^2_{L^2(0, \delta)} \leq C \frac{\delta}{r^3} \| \nabla u \|^2_{L^2 (B'_{r,\e})} + C \frac{\delta^2}{r^4} \| (\nabla u)_S \|^2_{L^2 (B_{r, \delta})}.
\end{equation}
Hence \eqref{Eq25}$_2$ is proved.
\medskip

\noindent{\it Step 3. } Estimate of $\mathcal U-\mathcal U(0)$. 
\smallskip

Applying inequality \eqref{eq6}$_4$ from Theorem \ref{th1} the following estimates on $\mathcal U$ hold:
\begin{gather}
\label{estu0}
\begin{aligned}
\left\| \frac{d \mathcal U_3}{dx_3} \right\|_{L^2 (0, \delta)} & \leq \frac{C}{r} \| (\nabla u)_S \|_{L^2(B_{r, \delta})},\\
\left\| \frac{d \mathcal U_{\alpha}}{dx_3} \right\|_{L^2 (0, \delta)} & \leq \| \mathcal R \|_{L^2 (0, \delta)} + \frac{C}{r} \| (\nabla u)_S \|_{L^2(B_{r, \delta})}.
\end{aligned}
 \end{gather}
Combining \eqref{estu0}$_2$ with \eqref{estr} gives
\begin{equation*}
\left\| \frac{d \mathcal U_{\alpha}}{dx_3} \right\|_{L^2 (0, \delta)}^2 \leq C \frac{\delta}{r^3} \| \nabla u \|^2_{L^2 (B'_{r,\e})} + C \frac{\delta^2}{r^4} \| (\nabla u)_S \|^2_{L^2 (B_{r, \delta})} + \frac{C}{r^2} \| (\nabla u)_S \|^2_{L^2(B_{r, \delta})}.
 \end{equation*}
Taking into account the assumption \eqref{initassump}$_2$, we obtain \eqref{Eq25}$_3$.
Then by \eqref{Eq25}$_3$, \eqref{estu0}$_1$ and the Poincar\'e's inequality \eqref{Eq25}$_4$, \eqref{Eq25}$_5$ follow.
\medskip

\noindent{\it Step 4.} We prove the estimates \eqref{Eq25}$_6$-\eqref{Eq25}$_7$.
\smallskip

By Korn inequality there exists rigid displacement ${\bf r}$
\begin{gather*}
\begin{aligned}
&{\bf r}(x) = {\bf a} + {\bf b} \wedge \left( x + \frac{\e}{2} e_3 \right),\\
&{{\bf a} = \frac{1}{\e^3}\int_{V_{\e}} u(x) \, dx},\\
&{\bf b} = \frac{6}{\e^5} \int_{V_{\e}} \left( x + \frac{\e}{2} e_3 \right) \wedge u(x) \, dx.
\end{aligned}
\end{gather*}
such that
\begin{gather}
\label{korncons}
\begin{aligned}
\| u - {\bf r} \|_{L^2 (V_{\e})} \leq C \e \| (\nabla u)_S \|_{L^2 (V_{\e})},\\
\| \nabla(u - {\bf r}) \|_{L^2 (V_{\e})} \leq C \| (\nabla u)_S \|_{L^2 (V_{\e})}.
\end{aligned}
\end{gather}
Besides by Poincar\'e-Wirtinger inequality we have
\begin{equation}
\label{pwa}
\| u_i - {\bf a}_i \|_{L^2 (V_{\e})} \leq C \e \| \nabla u_i \|_{L^2 (V_{\e})},\qquad i=1,2,3.
\end{equation}

The Sobolev embedding theorems give ($V=Y\times (-1,0)$)
\begin{equation*}
\| \varphi \|_{L^4 (Y)} \leq C \| \varphi \|_{H^{1/2} (Y)} \leq C \left( \| \varphi \|_{L^2 (V)} + \| \nabla \varphi \|_{L^2 (V)} \right), \quad \forall \varphi \in H^1 (V).
\end{equation*}
By a change of variables we obtain
\begin{equation*}
\| \varphi \|_{L^4 (Y_{\e})} \leq C \Big( \frac{1}{\e}\| \varphi \|_{L^2 (V_{\e})} + \e \| \nabla \varphi \|_{L^2 (V_{\e})} \Big), \quad \forall \varphi \in H^1 (V_{\e}).
\end{equation*}
Therefore,  \eqref{korncons} and the above inequality lead to
\begin{equation}
\label{inequr}
\| u - {\bf r} \|_{L^4 (Y_{\e})} \leq C \| (\nabla u)_S \|_{L^2 (V_{\e})}.
\end{equation}
From the identity
\begin{equation*}
\frac{1}{\pi r^2} \int_{D_r} (u(x', 0) - {\bf r} (x', 0)) dx' = \mathcal U (0) - {\bf a} - {\bf b} \wedge \frac{\e}{2} e_3,
\end{equation*}
estimate \eqref{inequr} and the H\"older inequality we get
\begin{equation}
\label{inequ}
\left|\mathcal U (0) - {\bf a} - {\bf b} \wedge \frac{\e}{2} e_3\right| \leq \frac{1}{\pi r^2} \left(\int_{D_r} 1^{4/3} dx' \right)^{3/4} \left( \int_{D_r} |u(x', 0) - {\bf r} (x', 0)|^4 dx' \right)^{1/4} \leq \frac{C }{r^{1/2}} \| (\nabla u)_S \|_{L^2 (V_{\e})}.
\end{equation}
As a first consequence, we obtain
\begin{equation}
\label{inequ11}
\left|\mathcal U_3(0) - {\bf a}_3\right| \leq  \frac{C }{r^{1/2}} \| (\nabla u)_S \|_{L^2 (V_{\e})}.
\end{equation}
From the Cauchy-Schwarz inequality and taking into account \eqref{pwa}, we derive
\begin{multline}
\label{ineqcs}
|{\bf b}| \leq \frac{C}{\e^5} \left( \int_{V_{\e}} \left| x + \frac{\e}{2} e_3 \right|^2 dx \right)^{1/2}  \left( \int_{V_{\e}} | u(x) - {\bf a} |^2 dx \right)^{1/2}  \\
\leq \frac{C}{\e^5} \cdot \e \cdot \e^{3/2} \| u - {\bf a} \|_{L^2 (V_{\e})} \leq \frac{C}{\e^{5/2}} \e \| \nabla u \|_{L^2 (V_{\e})} \leq \frac{C}{\e^{3/2}} \| \nabla u \|_{L^2 (V_{\e})}.
\end{multline}
Using \eqref{inequ} and \eqref{ineqcs} we have
\begin{equation}
\label{inequ2}
\left| \mathcal U(0) - {\bf a} \right| \leq \left| \mathcal U (0) - {\bf a} - {\bf b} \wedge \frac{\e}{2} e_3 \right| + \left| {\bf b} \wedge \frac{\e}{2} e_3 \right| \leq \frac{C }{r^{1/2}} \| (\nabla u)_S \|_{L^2 (V_{\e})} + \frac{C}{\e^{1/2}} \| \nabla u \|_{L^2 (V_{\e})}.
\end{equation}
Estimates \eqref{REC} and \eqref{pwa} yield
\begin{equation}
\label{utr}
\| u_i(\cdot, \cdot, 0) - {\bf a}_i \|^2_{L^2(Y_{\e})} \leq C \e \| \nabla u_i \|^2_{L^2 (V_{\e})},\qquad i=1,2,3.
\end{equation}
Combining \eqref{inequ2}, \eqref{utr} gives
\begin{equation*}
\begin{aligned}
\| u_{\alpha}(\cdot, \cdot, 0) - \mathcal U_{\alpha}(0) \|^2_{L^2 (Y_{\e})} &\leq C (\| u_{\alpha} (\cdot, \cdot, 0) - {\bf a}_{\alpha}\|^2_{L^2 (Y_{\e})} + \| \mathcal U_{\alpha}(0) - {\bf a}_{\alpha} \|^2_{L^2 (Y_{\e})}) \\
&\leq C \e \| \nabla u \|^2_{L^2 (V_{\e})} + C \frac{\e^2}{r} \| (\nabla u)_S \|^2_{L^2 (V_{\e})} + C\e \| \nabla u_{\alpha} \|^2_{L^2 (V_{\e})} \\
&\leq C \e \| \nabla u\|^2_{L^2 (V_{\e})} + C \frac{\e^2}{r} \| (\nabla u)_S \|^2_{L^2 (V_{\e})}, 
\end{aligned}
\end{equation*}
and from \eqref{inequ11} and again \eqref{utr} we obtain
\begin{equation*}
\| u_{3}(\cdot, \cdot, 0) - \mathcal U_{3}(0) \|^2_{L^2 (Y_{\e})} \leq C \e \| \nabla u_{3} \|^2_{L^2 (V_{\e})} + C \frac{\e^2}{r} \| (\nabla u)_S \|^2_{L^2 (V_{\e})}, 
\end{equation*}
Hence we get \eqref{Eq25}$_6$-\eqref{Eq25}$_7$. 
\end{proof}

\section{A priori estimates} 

\noindent In this section all the constants do not depend on $\e, \delta$ and $r$. We denote $x' = (x_1, x_2)$ the running point of $\R^2$.

\subsection{Decomposition of the displacements in  $\Omega_{r, \e,\delta}^i$ }

We decompose the displacement $u\in\V_{r,\e,\d}$ in each beam $\e{\bf i}+B_{r,\delta}$, ${\bf i} \in \widehat{\Xi}_\e \times \{ 0 \}$ as in the Definition \ref{eldispl}.  The components of the elementary displacement are denoted ${\cal U}_\xi$, ${\cal R}_\xi$, where $\xi =\displaystyle \left[ \frac{x'}{\e} \right]_{Y}$.

Now we define the fields $ \widetilde{\mathcal U}$, $ \widetilde{\mathcal R}$ and $\widetilde{\bar{u}}$ for a.e. $x \in B_{r, \delta}, s \in \omega$ by
\begin{equation*}
\widetilde{\mathcal U}(s_1, s_2, x_3) = \left\{\begin{aligned}
 & \mathcal U_{\xi} (x_3), &\hbox{if}\;\; \xi = \left[ \frac{s}{\e} \right]\in \widehat{\Xi}_\e\\
 & 0, &\hbox{if}\;\; \xi\not \in \widehat{\Xi}_\e\\
 \end{aligned}\right.
,\qquad \widetilde{\mathcal R}(s_1, s_2, x_3) = \left\{\begin{aligned}
 & \mathcal R_{\xi} (x_3), &\hbox{if}\;\; \xi = \left[ \frac{s}{\e} \right]\in \widehat{\Xi}_\e\\
 & 0, &\hbox{if}\;\; \xi\not \in \widehat{\Xi}_\e
 \end{aligned}\right.
 ,
\end{equation*}
\begin{equation*}
\widetilde{\bar{u}}(s_1, s_2, x) = \left\{\begin{aligned}
 & \bar u_{\xi} (x), &\hbox{if}\;\; \xi = \left[ \frac{s}{\e} \right]\in \widehat{\Xi}_\e\\
 & 0, &\hbox{if}\;\; \xi\not \in \widehat{\Xi}_\e
 \end{aligned}\right.
 .
\end{equation*}
We have
$$ \widetilde{\mathcal U}, \widetilde{\mathcal R} \in L^2(\omega , H^1((0,\delta) , \R^3)), \quad \widetilde{\bar{u}} \in L^2 (\omega, H^1 (B_{r, \delta}, \R^3)).$$
Moreover,
\begin{gather*}
\| \widetilde {\mathcal U} \|^2_{L^2 (\omega \times (0, \delta))} = \e^2 \sum_{\xi \in \widehat{\Xi}_\e} \| \mathcal U_{\xi} \|^2_{L^2 (0, \delta)},\quad
\| \widetilde {\mathcal R} \|^2_{L^2 (\omega \times (0, \delta))} = \e^2 \sum_{\xi \in \widehat{\Xi}_\e} \| \mathcal R_{\xi} \|^2_{L^2 (0, \delta)}, \\
\|\widetilde{\bar{u}}\|^2_{L^2 (\omega \times B_{r, \delta})} = \e^2 \sum_{\xi \in \widehat{\Xi}_\e} \| \bar{u}_{\xi} \|^2_{L^2 (B_{r, \delta})}.
\end{gather*}
As a consequence of the Theorem \ref{th1} and Lemma \ref{lem1} we get
\begin{lemma}\label{lem31}
Let $u$ be in $\V_{r,\e,\d}$. The following estimates hold:
\begin{equation}\label{344}
\begin{aligned}
&\Big\| \frac{\partial\widetilde {\mathcal R}}{\partial x_3}\Big\|_{L^2 (\omega \times (0, \delta))} \leq  C \frac{\e}{r^2} \| u \|_{V},\\
&\Big\|\frac{\partial\widetilde {\mathcal U}}{\partial x_3}- \widetilde {\mathcal R}\land e_3\Big\|_{L^2 (\omega \times (0, \delta))} \leq C  \frac{\e }{r} \| u \|_{V},\\
&\|\nabla_x \widetilde{\bar{u}}\|_{L^2 (\omega \times B_{r, \delta})} \leq C \e \| u \|_{V} ,\\
&\|\widetilde{\bar{u}}\|_{L^2 (\omega \times B_{r, \delta})} \leq C \e r \| u \|_{V} ,\\
&\| \widetilde {\mathcal R} \|_{L^2 (\omega \times (0, \delta))} \leq C \frac{\e\delta}{r^2} \| u \|_{V},\\
&\Big\|\frac{\partial\widetilde {\mathcal U}_\alpha}{\partial x_3}\Big\|_{L^2 (\omega \times (0, \delta))}\leq C \frac{\e\delta}{r^2} \| u \|_{V}.
\end{aligned}
\end{equation}
Moreover,
\begin{equation}\label{Diff}
\begin{aligned}
&\| \widetilde{\mathcal R} (\cdot, \cdot, 0)\|^2_{L^2 (\widehat{\omega}_{\e})} \leq C \frac{\e^2}{r^{3}} \|u \|^2_V,\\
&\| \widetilde{\mathcal R} (\cdot, \cdot, \delta)\|^2_{L^2 (\widehat{\omega}_{\e})} \leq C \frac{\e^2}{r^{3}} \| \nabla u \|^2_{L^2(\Omega^+_{\delta})},\\
&\| \widetilde{\mathcal U}_3-\widetilde{\mathcal U}_3(\cdot,\cdot, 0)\|_{L^2(\omega \times(0, \delta))} \leq C \frac{\delta \e}{r} \| u \|_{V},\\
&\| \widetilde{\mathcal U}_{\alpha}-\widetilde{\mathcal U}_{\alpha}(\cdot,\cdot, 0)\|_{L^2 (\omega \times (0, \delta))} \leq C \frac{\delta^2 \e}{r^2} \| u \|_{V}, \quad \text{where } \alpha = 1,2.
\end{aligned}
\end{equation}
\end{lemma}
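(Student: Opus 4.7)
The plan is to sum the per-beam estimates provided by Theorem \ref{th1} and Lemma \ref{lem1} over all $\xi\in\widehat{\Xi}_\e$, using the identities
$\|\widetilde{\mathcal U}\|^2_{L^2(\omega\times(0,\delta))} = \e^2\sum_{\xi\in\widehat{\Xi}_\e}\|\mathcal U_\xi\|^2_{L^2(0,\delta)}$ (and similarly for $\widetilde{\mathcal R}$ and $\widetilde{\bar u}$), together with the fact that the beams $\e\xi+B_{r,\delta}$ and the cells $\e\xi+V_\e\subset\Omega^-$ are pairwise disjoint. Consequently $\sum_\xi\|(\nabla u)_S\|^2_{L^2(\e\xi+B_{r,\delta})}\le\|u\|_V^2$ and $\sum_\xi\|\nabla u\|^2_{L^2(\e\xi+V_\e)}\le\|\nabla u\|^2_{L^2(\Omega^-)}$, the latter being controlled by $C\|u\|_V^2$ through the 3D-Korn inequality \eqref{Eq19}. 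The analogous sum over cells above the beams is bounded by $\|\nabla u\|^2_{L^2(\Omega^+_\delta)}$.

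The inequalities \eqref{344}$_1$--\eqref{344}$_4$ follow from Theorem \ref{th1} by squaring each per-beam bound, multiplying by $\e^2$ and summing. The estimates \eqref{344}$_5$, \eqref{344}$_6$, \eqref{Diff}$_3$, \eqref{Diff}$_4$ follow in the same way from the corresponding inequalities of Lemma \ref{lem1}. Each of them carries two terms, one with factor $\delta/r^3$ or $\delta^3/r^3$ and one with $\delta^2/r^4$ or $\delta^4/r^4$, which merge into a single worst-case term once the hypothesis \eqref{initassump}$_2$ is invoked: since $r/\delta\le C$ one has $\delta/r^3\le C\delta^2/r^4$ and $\delta^3/r^3\le C\delta^4/r^4$. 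Estimate \eqref{Diff}$_1$ is the direct summed version of \eqref{Eq25}$_1$.

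The only bound that does not follow verbatim from Lemma \ref{lem1} is \eqref{Diff}$_2$, which evaluates $\widetilde{\mathcal R}$ at the top face $x_3=\delta$. I would obtain it by repeating Step~1 of the proof of Lemma \ref{lem1} with $V_\e$ replaced by the cell $\e\xi+Y_\e\times(\delta,\delta+\e)\subset\Omega^+_\delta$ that sits just above the top disc of the beam; the same Cauchy--Schwarz and Poincaré manipulations give $|\mathcal R_\xi(\delta)|^2\le Cr^{-3}\|\nabla u\|^2$ on that cell, and summing using disjointness yields \eqref{Diff}$_2$. I expect the main obstacle to be the purely bookkeeping task of tracking the competing powers of $\e,\delta,r$ and of invoking $r/\delta\le C$ at the correct moments to collapse each two-term bound into the required one-term scaling; no new inequality beyond the per-beam ones already at hand is required.
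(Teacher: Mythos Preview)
Your proposal is correct and follows essentially the same route as the paper, which simply records that \eqref{344} comes from summing the per-beam bounds \eqref{eq6} and \eqref{Eq25}$_2$--\eqref{Eq25}$_3$ (with \eqref{Eq19} to control $\|\nabla u\|_{L^2(\Omega^-)}$), and that \eqref{Diff} comes from the remaining estimates of Lemma~\ref{lem1}. Your explicit remark that \eqref{Diff}$_2$ requires rerunning Step~1 of Lemma~\ref{lem1} on the cell $\e\xi+Y_\e\times(\delta,\delta+\e)$ above the beam is exactly the missing detail the paper sweeps under ``consequences of the estimates in Lemma~\ref{lem1}''; one small inaccuracy is that \eqref{Diff}$_3$ comes from \eqref{Eq25}$_4$, which has a single term and needs no use of $r/\delta\le C$.
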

\begin{proof}
Estimates \eqref{344}$_1$ -- \eqref{344}$_6$ follow directly from \eqref{Eq19}, \eqref{eq6}$_3$, \eqref{eq6}$_4$  and \eqref{Eq25}$_2$--\eqref{Eq25}$_3$ and estimates \eqref{Diff}$_1$ -- \eqref{Diff}$_4$ are the consequences of the estimates in Lemma \ref{lem1} and \eqref{Eq19}.
\end{proof}

\subsection{Estimates of the interface traces}
\begin{lemma}\label{lem32} There exists a constant $C$ independent of $\e, \delta, r$ such that for any $u \in \V_{r,\e,\d}$
\begin{equation}
\label{estu1}
\| u(\cdot, \cdot, 0) - \widetilde{\mathcal U}(\cdot, \cdot, 0) \|^2_{L^2 (\widehat{\omega}_{\e})} \leq C \frac{\e^2}{r} \| u \|^2_{V},
\end{equation}
\begin{equation}\label{estu2}
\begin{aligned}
\| u_{\alpha}(\cdot, \cdot, \delta) - \widetilde{\mathcal U}_{\alpha}(\cdot, \cdot, \delta) \|^2_{L^2 (\widehat{\omega}_{\e})} \leq C \e \| \nabla u_{\alpha} \|^2_{L^2 (\Omega^+_{\delta})} + C \frac{\e^2}{r} \| u \|^2_{V},\\
\| u_{3}(\cdot, \cdot, \delta) - \widetilde{\mathcal U}_{3}(\cdot, \cdot, \delta) \|^2_{L^2 (\widehat{\omega}_{\e})} \leq C \e \| \nabla u_{3} \|^2_{L^2 (\Omega^+_{\delta})} + C \frac{\e^2}{r} \| u \|^2_{V}.
\end{aligned}
\end{equation}
Moreover,
\begin{gather}
\label{diffu1}
\| u_{\alpha}(\cdot, \cdot, \delta) - u_{\alpha}(\cdot, \cdot, 0) \|^2_{L^2 (\widehat{\omega}_{\e})} \leq C \e \| \nabla u_{\alpha} \|^2_{L^2 (\Omega^+_{\delta})} + C \frac{\e^2 \delta^3}{r^4} \| u \|^2_V,\\
\label{diffu2}
\| u_3(\cdot, \cdot, \delta) - u_3(\cdot, \cdot, 0) \|^2_{L^2 (\widehat{\omega}_{\e})} \leq C \e \| \nabla u_{3} \|^2_{L^2 (\Omega^+_{\delta})} + C \frac{\e^2 \delta}{r^2} \| u \|^2_V.
\end{gather}
\end{lemma}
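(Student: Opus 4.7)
The plan is to stitch together three ingredients: the per-cell trace bounds from Lemma \ref{lem1}, a mirror-image version of those bounds at the top face $x_3 = \delta$, and the $x_3$-Poincar\'e controls on $\widetilde{\mathcal U}$ recorded in Lemma \ref{lem31}.

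For \eqref{estu1}, I would apply \eqref{Eq25}$_6$ for $\alpha = 1, 2$ and \eqref{Eq25}$_7$ for the third component in each translated cell $(\e\xi + Y_\e) \times (-\e, 0)$, $\xi \in \widehat{\Xi}_\e$, and then sum. The resulting bound involves $\|\nabla u\|^2$ and $\|(\nabla u)_S\|^2$ on the thin slab $\widehat{\omega}_\e \times (-\e, 0) \subset \Omega^-$. On $\Omega^-$ the Korn inequality \eqref{Eq19} turns the full gradient into the symmetric part, and both are bounded by $\|u\|_V^2$. The leftover $C\e\|u\|_V^2$ is absorbed into $C\frac{\e^2}{r}\|u\|_V^2$ by the non-penetration assumption $r < \e/2$ from \eqref{initassump}.

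For \eqref{estu2}, I would replay Step 4 of the proof of Lemma \ref{lem1} at the opposite end of each rod: the rigid-body correction is built on the upper cell $V_\e^{+,\xi} = (\e\xi + Y_\e) \times (\delta, \delta + \e) \subset \Omega^+_\delta$ (valid for $\e \leq L - \delta$), and every evaluation at $x_3 = 0$ is shifted to $x_3 = \delta$. The estimates of $|\mathcal U(\delta) - {\bf a}^+|$, $|{\bf b}^+|$, and the intermediate Sobolev--H\"older step transfer verbatim, and summing over $\widehat{\Xi}_\e$ delivers \eqref{estu2}. Unlike at the bottom, no Korn inequality is available on $\Omega^+_\delta$ (the Dirichlet face $\Gamma$ sits on $\partial\Omega^-$), which is why the $\|\nabla u_\alpha\|^2_{L^2(\Omega^+_\delta)}$ term has to stay in the final bound.

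For the jump estimates \eqref{diffu1} and \eqref{diffu2}, I would insert $\widetilde{\mathcal U}_i(\cdot, \cdot, \delta)$ and $\widetilde{\mathcal U}_i(\cdot, \cdot, 0)$ and use the triangle inequality. The outer pieces are controlled by \eqref{estu2} and \eqref{estu1}, while for the middle piece Cauchy--Schwarz yields
$$
\|\widetilde{\mathcal U}_i(\cdot, \cdot, \delta) - \widetilde{\mathcal U}_i(\cdot, \cdot, 0)\|^2_{L^2(\widehat{\omega}_\e)} \leq \delta\, \|\partial_{x_3}\widetilde{\mathcal U}_i\|^2_{L^2(\omega \times (0, \delta))}.
$$
For $i = \alpha$, \eqref{344}$_6$ gives the $C\frac{\e^2\delta^3}{r^4}\|u\|_V^2$ term of \eqref{diffu1}; for $i = 3$, the identity $(\widetilde{\mathcal R} \wedge e_3)_3 = 0$ combined with \eqref{344}$_2$ yields $\|\partial_{x_3}\widetilde{\mathcal U}_3\|_{L^2(\omega \times (0, \delta))} \leq C\frac{\e}{r}\|u\|_V$, hence the $C\frac{\e^2\delta}{r^2}\|u\|_V^2$ term of \eqref{diffu2}. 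The assumption $r/\delta \leq C$ from \eqref{initassump} makes $\frac{\e^2}{r}$ subdominant to both $\frac{\e^2\delta^3}{r^4}$ and $\frac{\e^2\delta}{r^2}$, so the $\frac{\e^2}{r}\|u\|_V^2$ residues coming from the outer pieces get absorbed. The only real hurdle is \eqref{estu2}: one must check that the $\e$-independent constants in the Sobolev embedding and the Korn--Poincar\'e inequalities used in Step 4 of Lemma \ref{lem1} actually transfer to the upper cell $V_\e^{+,\xi}$, which reduces to the same scaling $x \mapsto x/\e$ onto a fixed reference cube as in the bottom case.
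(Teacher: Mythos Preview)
Your proposal is correct and follows essentially the same route as the paper: sum the per-cell bounds \eqref{Eq25}$_6$--\eqref{Eq25}$_7$ for \eqref{estu1}, replay the same argument on the upper cells for \eqref{estu2}, and for \eqref{diffu1}--\eqref{diffu2} combine these with the Poincar\'e-type bounds on $\widetilde{\mathcal U}(\cdot,\cdot,\delta)-\widetilde{\mathcal U}(\cdot,\cdot,0)$ coming from \eqref{344}$_2$ and \eqref{344}$_6$. Your explicit bookkeeping of how the $C\e$ and $C\e^2/r$ residues get absorbed via \eqref{initassump} is exactly what the paper leaves implicit.
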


\begin{proof} Using \eqref{Eq25}$_6$-\eqref{Eq25}$_7$ and then summing over all cells give \eqref{estu1}. 
In the same way the  estimates \eqref{estu2} are derived.
\smallskip

Applying \eqref{344}$_2$ we  write
\begin{equation}
\label{eq4}
\|\widetilde{\mathcal U}_3(\cdot, \cdot, \delta) - \widetilde{\mathcal U}_3 (\cdot, \cdot, 0)\|_{L^2 (\widehat{\omega}_{\e})}^2 \leq \delta \left\| \frac{\partial \widetilde{\mathcal U}_3}{\partial x_3} \right\|^2_{L^2 (\widehat{\omega}_{\e} \times (0, \delta))} \leq C \frac{\e^2 \delta}{r^2} \| u \|^2_V.
\end{equation}
From \eqref{344}$_6$ we have
\begin{equation}
\label{eq5}
\| \widetilde{\mathcal U}_{\alpha} (\cdot, \cdot, \delta) - \widetilde{\mathcal U}_{\alpha}(\cdot, \cdot, 0)\|_{L^2 (\widehat{\omega}_{\e})}^2 \leq  C \frac{\e^2 \delta^3}{r^4} \| u \|^2_V.
\end{equation}
Using \eqref{eq5} and the above estimates we obtain \eqref{diffu1}, \eqref{diffu2}.
\end{proof}

\subsection{Estimates of the displacements in  $\Omega_{\delta}^+$ }
\begin{lemma}
\label{lem_est-1}
There exists a constant $C$ which does not depend on $\varepsilon$, $r$ and $\delta$, such that  for any $u \in \V_{r,\e,\d}$
\begin{gather}
\label{eq8}
\| u_{\alpha} \|_{H^1 (\Omega_{\delta}^+)} \leq C \Big(\frac{\e \delta^{3/2}}{r^2}+1\Big)\| u \|_V,\\
\label{eq80}
\| u_3 \|_{H^1 (\Omega_{\delta}^+)} \leq C\Big(\frac{\e \d^{1/2}}{r} +1\Big)\| u \|_V,
\end{gather}
where $\alpha = 1, 2$.
\end{lemma}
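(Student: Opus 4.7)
The plan is to apply Korn's second inequality on the Lipschitz slab $\Omega_\delta^+=\omega\times(\delta,L)$ (whose constant is uniform in $\delta\in[0,L/2]$, by rescaling $x_3$ to the fixed interval $(0,1)$), split $u=\mathbf{r}+(u-\mathbf{r})$ with $\mathbf{r}=\mathbf{a}+\mathbf{b}\wedge x$ a rigid displacement and $\|u-\mathbf{r}\|_{H^1(\Omega_\delta^+)}\leq C\|u\|_V$, and then control $\mathbf{r}$ through its trace at $x_3=\delta$. That trace is in turn controlled by the trace at $x_3=0$ (which is governed by the Korn inequality \eqref{Eq19} in $\Omega^-$) and by the layer-jump estimates \eqref{diffu1}--\eqref{diffu2} already proved in Lemma \ref{lem32}.

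More precisely, for $\delta\le L/2$ the restriction map $\mathbf{r}\mapsto \mathbf{r}(\cdot,\cdot,\delta)$ is a uniform isomorphism from the $6$-dimensional space of rigid motions onto affine functions on $\omega$, so $\|\mathbf{r}\|_{H^1(\Omega_\delta^+)}\le C\|\mathbf{r}(\cdot,\cdot,\delta)\|_{L^2(\omega)}$. Since affine maps on $\R^2$ form a finite-dimensional space and $|\omega\setminus\widehat\omega_\e|\to 0$, one also has the $\e$-uniform equivalence $\|\mathbf{r}(\cdot,\cdot,\delta)\|_{L^2(\omega)}\sim\|\mathbf{r}(\cdot,\cdot,\delta)\|_{L^2(\widehat\omega_\e)}$. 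Combined with the (uniform) trace theorem on $\Sigma_\delta^+$ applied to $u-\mathbf{r}$, this gives
$$
\|\mathbf{r}\|_{H^1(\Omega_\delta^+)}\le C\|u\|_V+C\|u(\cdot,\cdot,\delta)\|_{L^2(\widehat\omega_\e)}.
$$

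Splitting $u(\cdot,\cdot,\delta)=u(\cdot,\cdot,0)+[u(\cdot,\cdot,\delta)-u(\cdot,\cdot,0)]$, the first piece is bounded by $C\|u\|_V$ via the trace theorem in $\Omega^-$ and \eqref{Eq19}, and the second piece is bounded component-wise by \eqref{diffu1}--\eqref{diffu2}. For $\alpha\in\{1,2\}$ this yields
$$
\|u_\alpha(\cdot,\cdot,\delta)\|^2_{L^2(\widehat\omega_\e)}\le C\|u\|_V^2+C\e\|\nabla u_\alpha\|_{L^2(\Omega_\delta^+)}^2+C\frac{\e^2\delta^3}{r^4}\|u\|_V^2,
$$
and the analogous bound with $\e^2\delta/r^2$ in place of $\e^2\delta^3/r^4$ for $u_3$. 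Plugging these into the previous inequality and using $\|u_\alpha\|_{H^1(\Omega_\delta^+)}\le\|u-\mathbf{r}\|_{H^1(\Omega_\delta^+)}+\|\mathbf{r}_\alpha\|_{H^1(\Omega_\delta^+)}$ produces
$$
\|u_\alpha\|_{H^1(\Omega_\delta^+)}^2\le C\|u\|_V^2+C\e\,\|u_\alpha\|_{H^1(\Omega_\delta^+)}^2+C\frac{\e^2\delta^3}{r^4}\|u\|_V^2;
$$
absorbing the middle term for $\e$ small gives \eqref{eq8}, and the same scheme with \eqref{diffu2} yields \eqref{eq80}.

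The only delicate point is that the components of $\mathbf{r}$ are coupled: $\mathbf{r}_1(\cdot,\cdot,\delta)$ involves $a_1+b_2\delta$ and $b_3$, $\mathbf{r}_2(\cdot,\cdot,\delta)$ involves $a_2-b_1\delta$ and $b_3$, while $\mathbf{r}_3(\cdot,\cdot,\delta)$ involves $a_3,b_1,b_2$. Thus controlling $\|\mathbf{r}_\alpha\|_{H^1(\Omega_\delta^+)}$ necessarily brings in the $u_3$ trace, contributing a term of size $\e^2\delta/r^2\|u\|_V^2$ on the right-hand side of the $\alpha$ estimate. This is harmless because assumption \eqref{initassump}$_2$, $r/\delta\le C$, implies $\e^2\delta/r^2\le C\e^2\delta^3/r^4$, so this cross term is absorbed into the desired bound. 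All other ingredients (uniform Korn, uniform trace, finite-dimensional norm equivalence) are routine.
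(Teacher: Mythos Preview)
Your approach is essentially the same as the paper's: Korn's inequality on $\Omega_\delta^+$ to produce a rigid motion $\mathbf{r}$, control of $\mathbf{r}$ through its trace on $\widehat\omega_\e\times\{\delta\}$ (via finite-dimensionality), passage from the trace at $x_3=\delta$ to the trace at $x_3=0$ using the jump estimates \eqref{diffu1}--\eqref{diffu2}, and absorption of the $C\e\|\nabla u\|^2$ term for $\e$ small. Your explicit handling of the cross-coupling between the $\alpha$- and $3$-components of $\mathbf{r}$ via assumption \eqref{initassump}$_2$ is something the paper leaves implicit; note also that this coupling brings a term $C\e\|\nabla u_3\|^2_{L^2(\Omega_\delta^+)}$ into the $\alpha$-inequality, so one should either establish \eqref{eq80} first and substitute, or absorb all three component inequalities simultaneously.
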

\begin{proof}
From the Korn's inequality and the trace theorem  we derive
\begin{gather}
\label{estset0}
\begin{aligned}
&\| u \|_{L^2 (\Sigma)} & \leq \quad & C \| u \|_{H^1 (\Omega^-)} \leq C \| (\nabla u)_S \|_{L^2 (\Omega^{-})},\\
&\| u \|_{H^1 (\Omega_{\delta}^+)} & \leq \quad & C \big(\|(\nabla u)_S \|_{L^2(\Omega_{\delta}^+)} + \| u\|_{L^2 (\Sigma^+_\delta)}\big).
\end{aligned}
\end{gather}
We know that there exists a rigid displacement ${\bf r}$ 
$$\forall x\in \R^3,\qquad {\bf r}(x)={\bf a}+{\bf b}\land (x-\delta e_3),\qquad {\bf a},\;{\bf b}\in \R^3,$$ such that
\begin{equation}\label{u-r}
\| u - {\bf r} \|_{L^2 (\Sigma_{\delta}^+)} \leq C \|u-{\bf r}\|_{H^1(\Omega_{\delta}^+)}\leq C\| (\nabla u)_S \|_{L^2(\Omega_{\delta}^+)}.
\end{equation}
The constant does not depend on $\delta$. Then, we get
\begin{equation}\label{Trace u-r}
\|(u-{\bf r}) (\cdot, \cdot, \delta)\|_{L^2(\widehat{\omega}_{\e})} \leq \| u - {\bf r} \|_{L^2 (\Sigma_{\delta}^+)} \leq C\| (\nabla u)_S \|_{L^2(\Omega_{\delta}^+)}.
\end{equation}
Using
\begin{equation}
\| u(\cdot, \cdot, 0) \|_{L^2 (\widehat{\omega}_{\e})} \leq \| u \|_{L^2 (\Sigma)}\leq C\| (\nabla u)_S \|_{L^2(\Omega^-)}\leq C\|u \|_{V},
\end{equation}
from \eqref{diffu1}, \eqref{diffu2} we obtain
\begin{gather}
\label{ulest}
\begin{aligned}
\| u_{\alpha}(\cdot, \cdot, \delta) \|_{L^2 (\widehat{\omega}_{\e})} \leq C \e^{1/2} \| \nabla u_{\alpha} \|_{L^2 (\Omega^+_{\delta})} + C \frac{\e \delta^{3/2}}{r^2} \| u \|_V + C\| u \|_V,\\
\| u_3(\cdot, \cdot, \delta) \|_{L^2 (\widehat{\omega}_{\e})} \leq C \e^{1/2} \| \nabla u_{3} \|_{L^2 (\Omega^+_{\delta})} + C \frac{\e \delta^{1/2}}{r} \| u \|_V + C\| u \|_V.
\end{aligned}
\end{gather}
Combining this with \eqref{Trace u-r} gives
\begin{gather}
\label{rlest}
\begin{aligned}
\| {\bf r}_{\alpha} (\cdot, \cdot, \delta) \|_{L^2(\widehat{\omega}_{\e})} \leq C \e^{1/2} \| \nabla u_{\alpha} \|_{L^2 (\Omega^+_{\delta})} + C \frac{\e \delta^{3/2}}{r^2} \| u \|_V + C\| u \|_V,\\
\| {\bf r}_3 (\cdot, \cdot, \delta) \|_{L^2(\widehat{\omega}_{\e})} \leq C \e^{1/2} \| \nabla u_{3} \|_{L^2 (\Omega^+_{\delta})} + C \frac{\e \delta^{1/2}}{r} \| u \|_V + C\| u \|_V.
\end{aligned}
\end{gather}
Therefore,
\begin{gather*}
|{\bf a_1}| + |{\bf a_2}| + |{\bf b_3}|  \leq C \e^{1/2} \| \nabla u_{\alpha} \|_{L^2 (\Omega^+_{\delta})} + C \frac{\e \delta^{3/2}}{r^2} \| u \|_V + C\| u \|_V,\\
|{\bf a_3}| + |{\bf b_1}| + |{\bf b_2}| \leq C \e^{1/2} \| \nabla u_{3} \|_{L^2 (\Omega^+_{\delta})} + C \frac{\e \delta^{1/2}}{r} \| u \|_V + C\| u \|_V.
\end{gather*}
These estimates together with \eqref{u-r} allow to obtain estimates on $u_1, u_2, u_3$. From this we have
\begin{gather}
\label{h1u}
\begin{aligned}
\| u_{\alpha} \|_{H^1 (\Omega_{\delta}^+)} \leq C \e^{1/2} \| \nabla u_{\alpha} \|_{L^2 (\Omega^+_{\delta})} + C \frac{\e \delta^{3/2}}{r^2} \| u \|_V + C\| u \|_V,\\
\| u_3 \|_{H^1 (\Omega_{\delta}^+)} \leq C \e^{1/2} \| \nabla u_{3} \|_{L^2 (\Omega^+_{\delta})} + C \frac{\e \delta^{1/2}}{r} \| u \|_V + C\| u \|_V.
\end{aligned}
\end{gather}
Therefore, for $\e$ small enough the following hold true:
\begin{equation*}
\| \nabla u_\alpha \|_{L^2 (\Omega_{\delta}^+)} \leq C \frac{\e \delta^{3/2}}{r^2} \| u \|_V + C\| u \|_V,\qquad \| \nabla u_3 \|_{L^2 (\Omega_{\delta}^+)} \leq C \frac{\e \delta^{1/2}}{r} \| u \|_V + C\| u \|_V.
\end{equation*}
Inserting that in \eqref{h1u} we derive \eqref{eq8}-\eqref{eq80}.
\end{proof}
As a consequence of Lemma \ref{lem_est-1}, the estimates \eqref{diffu1}, \eqref{diffu2} can be replaced by
\begin{gather}
\label{est_cas1}
\| u_{\alpha}(\cdot, \cdot, \delta) - u_{\alpha}(\cdot, \cdot, 0) \|^2_{L^2 (\widehat{\omega}_{\e})} \leq C \frac{\e^2 \delta^3}{r^4} \| u \|^2_V,\\
\label{est_cas2}
\| u_3(\cdot, \cdot, \delta) - u_3(\cdot, \cdot, 0) \|^2_{L^2 (\widehat{\omega}_{\e})} \leq C  \frac{\e^2 \delta}{r^2} \| u \|^2_V.
\end{gather}


\subsection{Estimates for the set of beams $\Omega_{r, \e, \delta}^i$}
\begin{lemma}
\label{lem_est-2}
There exists a constant $C$ which does not depend on $\varepsilon$, $r$ and $\delta$, such that for any $u \in \V_{r,\e,\d}$
\begin{equation}\label{Eq321}
\begin{aligned}
\|\nabla u\|_{L^2 (\Omega_{r, \e, \delta}^i)}&\le C \frac{\delta}{r} \| u \|_V,\\
\|u_3\|_{L^2 (\Omega_{r, \e, \delta}^i)}&\le C  \frac{r\d^{1/2}}{\e} \| u \|_V,\\
\|u_\alpha\|_{L^2 (\Omega_{r, \e, \delta}^i)}& \leq C \frac{r\d^{1/2}}{\e}\left(1 + \frac{\e\d^{3/2}}{r^2} \right) \| u \|_V
\end{aligned}
\end{equation}
where $\alpha = 1, 2$.
\end{lemma}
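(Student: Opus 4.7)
The strategy follows the pattern of Sections~3--4: decompose $u=U_e+\bar u$ in each beam $\mathbf{i}\e+B_{r,\delta}$ as in Definition~\ref{eldispl}, compute $L^2$ norms of each constituent on a single beam, and sum over cells $\xi\in\widehat\Xi_\e$ using the extensions $\widetilde{\mathcal U},\widetilde{\mathcal R},\widetilde{\bar u}$ (recall $\|\widetilde\phi\|^2=\e^2\sum_\xi\|\phi_\xi\|^2$). The estimates from Lemma~\ref{lem31} and Lemma~\ref{lem32}, combined with the trace inequality $\|u(\cdot,\cdot,0)\|_{L^2(\Sigma)}\le C\|u\|_V$ (from Korn on $\Omega^-$, cf.\ \eqref{Eq19}), control each summed contribution, and the assumption $r\le C\delta$ from \eqref{initassump}$_2$ is used throughout to absorb lower-order remainders.

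\smallskip

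\noindent\emph{Gradient estimate \eqref{Eq321}$_1$.} Direct differentiation of \eqref{d1} gives $\partial_{x_\alpha}U_e=\mathcal R\land e_\alpha$ for $\alpha=1,2$, and $\partial_{x_3}U_e=d\mathcal U/dx_3+d\mathcal R/dx_3\land(x_1 e_1+x_2 e_2)$. Integrating $|\nabla U_e|^2$ over $B_{r,\delta}$ using $\int_{D_r}(x_1^2+x_2^2)\,dx'=\pi r^4/2$ and summing over $\xi$ yields
\[
\|\nabla U_e\|^2_{L^2(\Omega^i_{r,\e,\delta})}\le\frac{Cr^2}{\e^2}\Bigl(\|\widetilde{\mathcal R}\|^2+\|\partial_{x_3}\widetilde{\mathcal U}\|^2+r^2\|\partial_{x_3}\widetilde{\mathcal R}\|^2\Bigr).
\]
Substituting \eqref{344}$_1$, \eqref{344}$_2$, \eqref{344}$_5$ gives $C\delta^2/r^2\|u\|_V^2$. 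Adding $\|\nabla\bar u\|^2\le C\|u\|_V^2$ from \eqref{eq6}$_1$ (summed over cells) and absorbing the constant term via $\delta/r\ge 1/C$ yields \eqref{Eq321}$_1$.

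\smallskip

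\noindent\emph{Estimates on $u_j$ \eqref{Eq321}$_{2,3}$.} Writing $u_j=\mathcal U_j+(\mathcal R\land(x_1 e_1+x_2 e_2))_j+\bar u_j$, the triangle inequality produces three contributions: $\pi r^2\|\mathcal U_j\|^2_{L^2(0,\delta)}$, a rotation-cross term $\le Cr^4\|\mathcal R\|^2_{L^2(0,\delta)}$, and the warping $\|\bar u_j\|^2_{L^2(B_{r,\delta})}$. Summing over cells, the latter two become $\le C\delta^2\|u\|_V^2$ via \eqref{344}$_5$ and $\le Cr^2\|u\|_V^2$ via \eqref{344}$_4$; both are subleading (using $r\le C\delta$). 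For the main $\mathcal U_j$ term, \eqref{REC}$_2$ with $a=\delta$ gives $\|\mathcal U_j\|^2_{L^2(0,\delta)}\le 2\delta|\mathcal U_j(0)|^2+\delta^2\|d\mathcal U_j/dx_3\|^2_{L^2(0,\delta)}$. After summing and multiplying by $\pi r^2$, the first piece becomes $\frac{2\pi\delta r^2}{\e^2}\|\widetilde{\mathcal U}_j(\cdot,\cdot,0)\|^2_{L^2(\widehat\omega_\e)}$, and is bounded by $Cr^2\delta/\e^2\|u\|_V^2$ using \eqref{estu1} together with $\|u_j(\cdot,\cdot,0)\|_{L^2(\Sigma)}\le C\|u\|_V$ -- this is the leading term in \eqref{Eq321}$_{2,3}$. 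The $\|d\mathcal U_j/dx_3\|^2$ piece is then handled differently for $j=3$ and $j=\alpha$: by \eqref{eq6}$_4$ (for $j=3$) it yields $C\delta^2\|u\|_V^2$, absorbed; by \eqref{344}$_6$ (for $j=\alpha$) it yields $C\delta^4/r^2\|u\|_V^2$, which produces precisely the correction factor $1+\e\delta^{3/2}/r^2$ in \eqref{Eq321}$_3$.

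\smallskip

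\noindent\emph{Main obstacle.} The essential asymmetry between the $u_3$ and $u_\alpha$ bounds originates in Theorem~\ref{th1}: the estimate \eqref{eq6}$_4$ bounds $\|d\mathcal U_3/dx_3\|$ directly by $\|(\nabla u)_S\|/r$, whereas the bound on $\|d\mathcal U_\alpha/dx_3\|$ must absorb the rotation term, which after summing is of order $\e\delta/r^2$ (cf.\ \eqref{344}$_5$, \eqref{344}$_6$). Carefully tracking this asymmetry through the Poincar\'e step \eqref{REC}$_2$ and verifying that every subleading term is absorbed by the assumptions \eqref{initassump} is the principal technical difficulty of the proof.
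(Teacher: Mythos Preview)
Your plan is correct and follows essentially the same route as the paper: decompose $u=U_e+\bar u$ on each beam, control the constituents via the summed estimates of Lemma~\ref{lem31} (the paper quotes the single-beam versions \eqref{Eq25} and sums at the end, but this is purely presentational), and handle the boundary value $\mathcal U(0)$ through \eqref{estu1} together with the trace bound $\|u(\cdot,\cdot,0)\|_{L^2(\Sigma)}\le C\|u\|_V$ coming from \eqref{Eq19}. The asymmetry you track between $j=3$ and $j=\alpha$ via \eqref{eq6}$_4$ versus \eqref{344}$_6$ is exactly the mechanism in the paper.

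One small caveat, shared with the paper: your assertion that the $C\delta^2$ contribution (from the rotation cross term and from the $\|d\mathcal U_3/dx_3\|^2$ piece in the $j=3$ case) is ``absorbed'' by the leading term $Cr^2\delta/\e^2$ is not justified by $r\le C\delta$ alone---that would require $\e^2\delta\le Cr^2$, which does not follow from \eqref{initassump}. The paper's own proof arrives at $\sum_\xi\|u_{\xi,3}\|^2_{L^2(B_{r,\delta})}\le C\frac{r^2\delta}{\e^2}\bigl(1+\frac{\e^2\delta}{r^2}\bigr)\|u\|_V^2$ and then asserts \eqref{Eq321}$_2$ without further comment, so you are reproducing the paper's argument faithfully; the extra factor is bounded in the parameter regimes (i)--(ii) of Section~\ref{limcas} (indeed $\e^2\delta/r^2\to 0$ there), which is all that is needed downstream.
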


\begin{proof} 
From the estimates in Theorem \ref {th1}, \eqref{Eq25}$_2$ and \eqref{Eq25}$_3$ and after summation over all the beams, we get (we make use of the  assumption \eqref{initassump}$_2$)
\begin{equation}\label{grad u}
\|\nabla u\|^2_{L^2 (\Omega_{r, \e, \delta}^i)}\le C \left( \frac{\delta}{r} \| \nabla u \|^2_{L^2 (\Omega^-)} +\frac{\delta^2}{r^2} \| (\nabla u)_S \|^2_{L^2 (\Omega_{r, \e, \delta}^i)}\right) \leq C \frac{\delta^2}{r^2} \| u \|^2_V.
\end{equation}
From  \eqref{estu1} and \eqref{estset0}$_1$, it follows that
\begin{gather*}
\sum_{\xi \in \Xi_{\e}} \e^2 | \mathcal U_{\xi} (0) |^2  = \| \widetilde{\mathcal U} (\cdot, \cdot, 0) \|^2_{L^2 (\widehat{\omega}_{\e})} \leq C \frac{\e^2}{r}\| u \|^2_V + C\| u \|^2_V,\\
\sum_{\xi \in \Xi_{\e}}| \mathcal U_{\xi} (0) |^2 \leq C \left(\frac{1}{r} + \frac{1}{\e^2}\right)\| u \|^2_V.
\end{gather*}
Using \eqref{eq6}$_4$, \eqref{Eq25}$_3$, \eqref{REC},  we obtain
\begin{gather}
\label{Ua3}
\begin{aligned}
\sum_{\xi \in \Xi_{\e}}\| \mathcal U_{\xi, 3} \|^2_{L^2 (0, \delta)} \leq C \left( \frac{\d}{\e^2} + \frac{\delta^2}{r^2} \right) \| u \|^2_V,\\
\sum_{\xi \in \Xi_{\e}}\| \mathcal U_{\xi, \alpha} \|^2_{L^2 (0, \delta)} \leq C \left( \frac{\d}{\e^2} + \frac{\delta^4}{r^4} \right) \| u \|^2_V.
\end{aligned}
\end{gather}
Additionally,
\begin{equation}
\label{wrineq2}
\sum_{\xi \in \Xi_{\e}} \| \bar u_{\xi} \|^2_{L^2 (B_{r, \delta})} \leq C r^2 \| (\nabla u)_S \|^2_{L^2 (\Omega_{r, \e, \delta}^i)} \leq C r^2 \| u \|^2_V.
\end{equation}
Then \eqref{Eq25}$_2$, \eqref{Ua3} and \eqref{wrineq2} give
\begin{gather*}
\sum_{\xi \in \Xi_{\e}} \| u_{\xi, \alpha}\|^2_{L^2 (B_{r, \delta})} \le C \left( \frac{r^2\d}{\e^2} + \frac{\delta^4}{r^2} + \delta^2 + r^2 \right) \| u \|^2_V \leq C \frac{r^2\d}{\e^2} \left(1+ \frac{\e^2\delta^3}{r^4} \right) \| u \|^2_V,\\
\sum_{\xi \in \Xi_{\e}} \| u_{\xi, 3}\|^2_{L^2 (B_{r, \delta})} \le C \left( \frac{r^2\d}{\e^2} + \delta^2 + r^2 \right) \| u \|^2_V \leq C \frac{r^2\d}{\e^2} \left( 1 + \frac{\e^2\delta}{r^2} \right) \| u \|^2_V.
\end{gather*}
From the last inequalities we derive \eqref{Eq321}$_2$ and \eqref{Eq321}$_3$.
\end{proof}

\subsection{The limit cases}\label{limcas}
In view of the conditions \eqref{initassump} and the estimates in Lemma  \ref{lem_est-1}, and in order that the lower and upper parts of our structure match, we must assume that
\begin{equation}\label{assump}
\frac{\varepsilon^2 \delta^3}{r^4}  \hbox{ is uniformly bounded from above}.
\end{equation}
%
%
{\it From now on, the parameters $r$, $\d$ and $\e$ are linked in this way
\begin{itemize}
\item $r=\kappa_0\e^{\eta_0}$, $\eta_0\ge 1$, $\kappa_0>0$, if $\eta_0=1$ then $\kappa_0\in (0,1/2)$ (non penetration condition),
\item $\d=\kappa_1\e^{\eta_1}$, $\kappa_1>0$ and $\eta_1 \leq \eta_0$.
\end{itemize}
The above assumption \eqref{assump} yields
$$2+3\eta_1-4\eta_0\ge 0.$$
 
\noindent Hence, the couple $(\eta_0,\eta_1)$ belongs to the triangle whose vertexes are
\begin{equation}\label{triangle}
(2/3,1),\qquad (1,1),\qquad (2,2).
\end{equation}
The case
$$\lim_{\e\to 0}{\e^2\d^3\over r^4}=0$$ could be very easily analysed. Using the estimates \eqref{est_cas1}-\eqref{est_cas2}, in this case we can prove  that, the limit  displacements on both parts  coincide on the interface; hence  the limit displacement  belongs to $H^1(\omega\times(-L,L) , \R^3)$ and it is the solution of an  elasticity system. 
\smallskip

Therefore, the most interesting cases correspond to $\ds \lim_{\e\to 0}{\e^2\d^3\over r^4}>0$; this is the critical situation 
\begin{itemize}
\item (i) $r=\kappa_0\e$,  $\kappa_0\in (0,1/2)$ and $\d=\kappa_1\e^{2/3}$, $\kappa_1>0$,
\item (ii) $r=\kappa_0\e^{\eta_0}$, $\eta_0\in (1,2)$, $\kappa_0>0$ and $\d=\kappa_1\e^{(4\eta_0-2)/3}$, $\kappa_1>0$.
\end{itemize}
We obtain  the edge of the triangle with the vertexes $(2/3, 1)$ and $(2,2)$. We eliminate the case $\eta_0=\eta_1=2$ to deal with  small beams in the layer.
\bigskip


\noindent For the sake of simplicity, from now on we will use the following notations: }
\begin{itemize}
\item $\Omega_{\e}$ instead of $\Omega_{r, \e, \delta}$,
\item $\Omega_{\varepsilon}^i$ instead of $\Omega_{r, \e, \delta}^i$,
\item $\Omega_{\varepsilon}^+$ instead of $\Omega_{\delta}^+$,
\item $\sigma_{\e}$ instead of $\sigma_{r, \e, \delta}$,
\item $u_{\e}$ instead of $u_{r, \e, \delta}$,
\item $f_{\e}$ instead of $f_{r, \e, \delta}$.
\end{itemize} 

With assumption \eqref{assump} we can rewrite some estimates obtained above. For any $u \in \V_{r,\e,\d}$ we have
\begin{gather}
\label{finest}
\| u \|_{L^2 (\Omega_{\e}^i)} \leq C \frac{r\d^{1/2}}{\e} \| u \|_V,\\
\| u\|_{H^1 (\Omega_{\e}^+)} \leq C \| u \|_V.
\label{fineste}
\end{gather}
The constants do not depend on $\e$, $r$ and $\d$.

\subsection{Force assumptions}
We set
$$ B_1 = D_1 \times (0, 1).$$
To obtain estimates on $u_{\e}$ we test \eqref{wf} with $\varphi = u_{\varepsilon}$. We have
\begin{equation}
\label{aprest}
M_1 \| u_{\e}\|^2_V \leq  \| f_{\varepsilon} \|_{L^2 (\Omega_{\e},\R^3)} \| u_{\e} \|_{L^2(\Omega_{\e},\R^3)}.
\end{equation}

We consider the following assumption on the applied forces:
\begin{equation}
\label{forassump}
f _\e(x) = \left\{
\begin{aligned}
&\ds \frac{\e^2}{r^2\d} \,  F^m \left(\e\left[ \frac{x'}{\e}\right]_{Y}, \frac{\e}{r}\left\{ \frac{x'}{\e}\right\}_{Y}, \frac{x_3}{\delta} \right) \enskip \hbox{for a.e.}\;\; x \in \Omega_{\e}^i,\\[2mm]
&F (x) \hskip 39mm \hbox{for a.e.}\;\; x \in \Omega^- \cup \Omega^+_{\e},
\end{aligned}
\right.
\end{equation}
where $F^m \in {\cal C}^0 (\overline{\omega}, L^2 (B_1, \R^3))$, $F \in L^2 (\omega \times (-L, L), \R^3)$. Then,
\begin{equation*}
\| f _\e\|_{L^2 (\Omega^i_{\e} , \R^3)} \leq \frac{\e}{r\delta^{1/2}}  \| F^m \|_{L^\infty(\omega,L^2 (B_1,\R^3))}.
\end{equation*}
Making use of the estimates  \eqref{Eq19}, \eqref{finest}, \eqref{fineste} together with   inequality \eqref{aprest}  yield 
\begin{equation}\label{Eq327}
\|u_\e\|_V\le C
\end{equation}
The constant does not depend of $r$, $\e$ and $\d$.
\medskip

{\it As mention above, from now on, we only consider the cases (i) and (ii) introduced in Section \ref{limcas}.}
\section{The periodic unfolding operators}
\begin{definition}
For $\varphi$ Lebesgue-measurable function on $\omega\times (0,\d)$, the unfolding operator $\mathcal T_{\e}$ is defined as follows:
\begin{equation*}
\mathcal T_{\e} (\varphi) (s_1, s_2, X_3) = 
\left\{
\begin{array}{ll}
 \varphi (s_1, s_2, \delta X_3), & \text{ for a.e.  }\;\;  (s_1, s_2, X_3) \in \widehat \omega_{\e} \times (0, 1),\\
 0, & \text{ for a.e. } \;\; (s_1, s_2, X_3) \in \Lambda_{\varepsilon} \times (0, 1).
\end{array}
\right.
\end{equation*}
\end{definition}

\begin{definition}
For $\varphi$ Lebesgue-measurable function on $\omega\times B_{r,\d}$, the unfolding operator $\mathcal T'_{\e}$ is defined as follows:
\begin{equation*}
\mathcal T^{'}_{\e} (\varphi) (s_1, s_2, X_1, X_2, X_3) = 
\left\{
\begin{array}{ll}
 \varphi (s_1, s_2, rX_1, rX_2, \delta X_3), & \text{ for a.e.  }\;\;  (s_1, s_2, X_1, X_2, X_3) \in \widehat \omega_{\e} \times B_1,\\
 0, & \text{ for a.e.  }\;\;  (s_1, s_2, X_1, X_2, X_3) \in \Lambda_{\varepsilon} \times B_1.
\end{array}
\right.
\end{equation*}
\end{definition}
Observe that if $\varphi$ is a Lebesgue-measurable function on $\omega\times (0,\d)$ then $\mathcal T_{\e}(\varphi)=\mathcal T_{\e}^{'}(\varphi)$.
\begin{lemma}
\label{lpr}
(Properties of the operators $\mathcal T_{\e}$, $\mathcal T_{\e}^{'}$)
 \begin{enumerate}
  \item $\forall v,w \in L^2(\omega \times (0, \delta))$ $$\mathcal T_{\e}(vw) = \mathcal T_{\e}(v) \mathcal T_{\e}(w),$$\\
  $\forall v,w \in L^2(\omega \times B_{r, \delta})$ $$\mathcal T_{\e}^{'}(vw) = \mathcal T_{\e}^{'}(v) \mathcal T_{\e}^{'}(w).$$
  
  \item $\forall u \in L^1(\omega\times (0, \delta))$  
  $$\delta \int_{\omega \times (0, 1)} \mathcal T_{\e}(u) \, ds_1\,ds_2\,dX_3 = \int_{\widehat{\omega}_{\e} \times (0, \delta)} u\,ds_1\,ds_2\,dx_3,$$
  $\forall u \in L^1(\omega\times B_{r, \delta})$  
  $$r^2 \delta \int_{\omega \times B_1} \mathcal T_{\e}^{'}(u)\,ds_1\,ds_2\,dX_1\,dX_2\,dX_3 = \int_{\widehat{\omega}_{\e} \times B_{r, \delta}} u\,ds_1\,ds_2\,dx_1\,dx_2\,dx_3.$$
  
  \item $\forall u \in L^2(\omega \times (0, \delta))$  
  $$\| \mathcal T_{\e}(u) \|_{L^2 (\omega \times (0, 1))} \leq \frac{1}{\sqrt \delta} \| u\|_{L^2(\omega \times (0, \delta))},$$
  
  $\forall u \in L^2(\omega \times B_{r, \delta})$  
  $$\| \mathcal T_{\e}^{'}(u) \|_{L^2 (\omega \times B_1)} \leq \frac{1}{r \sqrt \delta} \| u\|_{L^2(\omega \times B_{r, \delta})}.$$
  
  \item Let $u$ be in $L^2(\omega, H^1(0, \delta))$, a.e.  in $\omega \times (0, 1)$ we have
  $$\delta \mathcal T_{\e}(\nabla_{x_3} u) = \nabla_{X_3} \mathcal T_{\e} (u).$$
  Let $u$ be in $L^2(\omega, H^1(B_{r, \delta}))$., a.e.  in $\omega \times B_1$ we have
  $$r \mathcal T_{\e}^{'}(\nabla_{x_{\alpha}} u) = \nabla_{X_{\alpha}} \mathcal T_{\e}^{'} (u), \quad \delta \mathcal T_{\e}^{'}(\nabla_{x_3} u) = \nabla_{X_3} \mathcal T_{\e}^{'} (u), \text{ where } \alpha = 1,2.$$
  \end{enumerate}
\end{lemma}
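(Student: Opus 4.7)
My plan is to treat the four assertions as direct consequences of a single observation: on the ``good'' set $\widehat{\omega}_\e$, the unfolding operators $\mathcal T_\e$ and $\mathcal T_\e^{'}$ are simply compositions with affine rescalings of the variables (scaling $x_3$ by $\delta$ for $\mathcal T_\e$, and scaling $(x_1,x_2,x_3)$ by $(r,r,\delta)$ for $\mathcal T_\e^{'}$), while on the boundary strip $\Lambda_\e = \omega \setminus \widehat{\omega}_\e$ they vanish by definition. Once this is fixed, each property reduces to a standard manipulation.

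Item 1 (multiplicativity) is immediate since composition with a single map commutes with pointwise products, and both sides are zero on $\Lambda_\e \times (0,1)$ and on $\Lambda_\e \times B_1$ respectively. For item 2, I would perform the linear change of variables $x_3 = \delta X_3$ in the integral over $\widehat{\omega}_\e \times (0,1)$, producing the Jacobian $\delta$, while the contribution from $\Lambda_\e$ on the left is zero and $\widehat{\omega}_\e \times (0,\delta)$ is precisely the set where the integrand on the right lives. The case of $\mathcal T_\e^{'}$ is identical, with the additional substitution $(x_1,x_2) = (rX_1, rX_2)$ contributing the factor $r^2$. Item 3 is then obtained by applying item 1 to $u^2$ (or $|u|^2$) followed by item 2, extracting $1/\sqrt{\delta}$ (respectively $1/(r\sqrt{\delta})$) from the resulting identity.

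For item 4, the key is the chain rule: differentiating $u(s_1,s_2,\delta X_3)$ in $X_3$ gives $\delta\,\partial_{x_3} u(s_1,s_2,\delta X_3) = \delta \mathcal T_\e(\partial_{x_3} u)$. For $\mathcal T_\e^{'}$, the same computation carried out separately for $X_1, X_2, X_3$ produces the $r$-factors on the horizontal derivatives and the $\delta$-factor on the vertical one; since the outcome is a.e. formulated in $\omega \times B_1$, the vanishing on $\Lambda_\e$ causes no issue. The only step requiring a word of care is that $u$ lies merely in $L^2(\omega, H^1(\cdot))$ rather than being smooth, so the chain rule must be interpreted between weak derivatives; this is handled by a routine density argument (approximate by smooth functions in the relevant Bochner space and pass to the limit using item 3) and constitutes the most substantive, though still elementary, step in what is otherwise a pure bookkeeping proof.
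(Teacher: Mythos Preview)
Your proposal is correct and follows the same approach as the paper: the paper's proof simply refers the reader to \cite{griso2} for items 1--3 and states that item 4 is a direct consequence of the chain rule, which is exactly the change-of-variables/chain-rule argument you have written out in detail. Your derivation of item 3 from items 1 and 2 and your remark on the density argument for weak derivatives are more explicit than what the paper provides, but the underlying route is identical.
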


\begin{proof}
Properties 1-3 are obtained similarly as in the proof of Lemma 5.1 of \cite{griso2}.

\noindent Property 4 is the direct consequence chain rule formulae:
\begin{gather*}
\frac{\partial(\mathcal T_{\e}^{'} (u))}{\partial X_{\alpha}} = r \mathcal T_{\e}^{'}\left( \frac{\partial u}{\partial x_{\alpha}} \right), \quad \alpha = 1, 2,\\
\frac{\partial(\mathcal T_{\e} (u))}{\partial X_3} = \delta \mathcal T_{\e}\left( \frac{\partial u}{\partial x_3} \right), \quad \frac{\partial(\mathcal T_{\e}^{'} (u))}{\partial X_3} = \delta \mathcal T_{\e}^{'}\left( \frac{\partial u}{\partial x_3} \right).
\end{gather*}
\vskip-9mm
\end{proof}

\subsection{The limit fields (Cases (i) and (ii))}

{\it From now on, $(u_{\varepsilon})_{\alpha}$ will be denoted as $u_{\e,\alpha}$; the same notation will be used for the fields with values in $\R^2$ or $\R^3$.}
\medskip

From Lemmas \ref{lem31} and \ref{lpr} we obtain the following result.
\begin{lemma}
\label{unfest}
There exists a constant $C$ independent of $\e$, $\d$ and $r$ such that
\begin{gather}
\| \mathcal T_{\e}(\widetilde{\mathcal U}_{\e}) \|_{L^2(\omega ,H^1(0, 1))} \leq C  ,\\
\| \mathcal T_{\e}(\widetilde{\mathcal U}_{\e,3}) - \widetilde{\mathcal U}_{\e,3 }(\cdot, \cdot, 0)\|_{L^2(\omega , H^1(0, 1))} \leq C \frac{r}{\delta}  ,\\
\| \mathcal T_{\e}(\widetilde{\mathcal R}_{\e}) \|_{L^2(\omega, H^1 (0, 1))} \leq \frac{C}{\delta}  ,\\
\Big\| \frac{\partial \mathcal T_{\e}(\widetilde {\mathcal U}_{\e})}{\partial X_3}- \delta\mathcal T_{\e}(\widetilde {\mathcal R}_{\e})\land e_3\Big\|_{L^2 (\omega \times (0, 1))} \leq C \frac{r}{\delta}  ,\\
\| \mathcal T_{\e}^{'}(\widetilde{\bar u}_{\e}) \|_{L^2(\omega \times(0,1), H^1( D_1))} \leq C \frac{r^2}{\delta^2},\\
\left\|\frac{\partial \mathcal T_{\e}^{'}(\widetilde{\bar{u}}_{\e})}{\partial X_3}\right\|_{L^2 (\omega \times B_1)} \leq C \frac{r}{\delta}.
\end{gather}
\end{lemma}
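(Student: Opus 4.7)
The proof is essentially a mechanical translation of the estimates in Lemma \ref{lem31} through the scaling properties 3 and 4 of Lemma \ref{lpr}, combined with the key structural observation that in both critical regimes (i) and (ii) the dimensionless quantity $\e\delta^{3/2}/r^2$ remains uniformly bounded (it is exactly the equality case of \eqref{assump}). I will also repeatedly use \eqref{Eq327}, namely $\|u_\e\|_V\le C$.

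\emph{First,} I will record the generic principle: by property 3, $\|\mathcal T_\e(\varphi)\|_{L^2(\omega\times(0,1))}\le\delta^{-1/2}\|\varphi\|_{L^2(\omega\times(0,\delta))}$, and by property 4, $\|\partial_{X_3}\mathcal T_\e(\varphi)\|_{L^2(\omega\times(0,1))}=\delta\,\|\mathcal T_\e(\partial_{x_3}\varphi)\|_{L^2}\le\sqrt\delta\,\|\partial_{x_3}\varphi\|_{L^2(\omega\times(0,\delta))}$; analogous scalings with the extra factor $1/r$ for $\mathcal T_\e'$ and the factor $r$ for $\nabla_{X_\alpha}$.

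\emph{For \eqref{lem31}--style bounds $(1)$--$(4)$}, I would proceed as follows. For the derivative part of $(1)$ and for $(4)$, apply the scaling above to \eqref{344}$_2$: this gives $\sqrt\delta\cdot C\e/r=C\e\sqrt\delta/r$, and using $\e\delta^{3/2}/r^2\le C$ one rewrites $\e\sqrt\delta/r=(\e\delta^{3/2}/r^2)\cdot(r/\delta)\le C\,r/\delta$, which yields the required $C\,r/\delta$ bound in $(4)$; the same factor bounds the $X_3$-derivative of $\mathcal T_\e(\widetilde{\mathcal U}_\e)$ once combined via \eqref{344}$_5$ and $r\le C\delta$, giving finally $C\e\delta^{3/2}/r^2\le C$ as claimed in $(1)$. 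For the $L^2$ part of $(1)$, I would first estimate $\|\widetilde{\mathcal U}_\e(\cdot,\cdot,0)\|_{L^2(\omega)}\le C$ using \eqref{estu1} together with the trace/Korn bound $\|u_\e(\cdot,\cdot,0)\|_{L^2(\Sigma)}\le C\|u_\e\|_V$ (noting that $\e/r^{1/2}$ is bounded in both cases (i)--(ii)), then apply a 1D Poincaré in $x_3$ to pass from this trace to $\|\widetilde{\mathcal U}_\e\|_{L^2(\omega\times(0,\delta))}\le C\sqrt\delta$, and conclude by property 3. For $(2)$, \eqref{Diff}$_3$ under property 3 gives $\sqrt\delta\,(C\delta\e/r)=C\e\delta^{3/2}/r\le C\,r/\delta$, the last inequality being again the critical-regime identity. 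For the derivative of $(2)$, the estimate $\|\partial_{x_3}\widetilde{\mathcal U}_{\e,3}\|_{L^2}\le C\e/r$, obtained from \eqref{eq6}$_4$ by summation over $\xi$, combined with property 4 yields the same bound. For $(3)$, property 3 applied to \eqref{344}$_5$ produces $C\e\sqrt\delta/r^2$, and property 4 applied to \eqref{344}$_1$ produces the same; both are $\le C/\delta$ by the critical-regime bound.

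\emph{For the warping bounds $(5)$--$(6)$}, I would use the $\mathcal T_\e'$-versions of the same principle. Estimate \eqref{344}$_4$ under property 3 for $\mathcal T_\e'$ yields $\|\mathcal T_\e'(\widetilde{\bar u}_\e)\|_{L^2(\omega\times B_1)}\le(r\sqrt\delta)^{-1}\cdot C\e r= C\e/\sqrt\delta$; the $\nabla_{X_\alpha}$-part is bounded the same way using \eqref{344}$_3$ and the factor $r$ from property 4. In both cases, using $\e/\sqrt\delta=(\e\delta^{3/2}/r^2)\cdot(r^2/\delta^2)\le C\,r^2/\delta^2$ gives the bound in $(5)$. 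For $(6)$, property 4 turns $\|\partial_{x_3}\widetilde{\bar u}_\e\|_{L^2}\le C\e$ (from \eqref{344}$_3$, since a vertical derivative contributes no $1/r$) into $(\sqrt\delta/r)\cdot C\e$, and the same critical-regime identity rewrites this as $C\,r/\delta$.

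\emph{The main obstacle is really just bookkeeping:} making sure that after absorbing the $1/\sqrt\delta$ and $1/(r\sqrt\delta)$ factors from property 3 and the multiplicative $\delta$ or $r$ from property 4, every remaining power of $\e,r,\delta$ can be reorganized through the single critical identity $\e\delta^{3/2}/r^2\le C$ (together with the subsidiary bounds $r\le C\delta$ from \eqref{initassump}$_2$ and $\e/r^{1/2}\le C$ in cases (i)--(ii)). No new analytic estimate beyond Lemma \ref{lem31} is needed.
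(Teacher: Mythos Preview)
Your proposal is correct and follows exactly the approach the paper indicates (the paper's own ``proof'' is just the one-line reference ``From Lemmas~\ref{lem31} and~\ref{lpr} we obtain the following result''), so you are in fact supplying the omitted details. One small slip: in the $L^2$ part of estimate~(2) property~3 contributes a factor $1/\sqrt\delta$, not $\sqrt\delta$, so the bound is $C\e\sqrt\delta/r$ rather than $C\e\delta^{3/2}/r$; either way the critical-regime identity $\e\delta^{3/2}/r^2\le C$ yields the required $Cr/\delta$, so the argument goes through unchanged.
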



Further we extend function $u_{\e}$ defined on the domain $\Omega_{\e}^+$ by reflection to the domain $\omega \times (\delta, L + \delta)$. The new function is  denoted $u_{\e}$ as before.
\begin{proposition}\label{prop41}
\label{limits_case1} There exist a subsequence of $\{\e\}$, still denoted by $\{ \e \}$, and $u^{\pm} \in H^1(\Omega^{\pm},\R^3)$ with $u^- = 0$ on $\Gamma$, $\widetilde{\mathcal R} \in L^2 (\omega, H^1_0 ((0, 1),\R^3))$, $\widetilde{\mathcal U}_\alpha \in L^2 (\omega, H^2(0, 1))$, $\widetilde{\mathcal U}_3,\;\;\widetilde{\mathcal U}'_3 \in L^2 (\omega, H^1(0, 1))$, $\widetilde{\bar u} \in L^2(\omega \times (0, 1), H^1(D_1,\R^3))$ and $Z \in L^2 (\omega \times (0, 1),\R^3)$ such that

\begin{gather}
\label{ulim1}
u_{\e} \rightharpoonup u^- \quad \text{ weakly in } H^1(\Omega^-), \text{ strongly in } L^2(\Omega^-),\\
\label{ulim2}
u_{\e}(\cdot + \delta e_3) \rightharpoonup u^+ \quad \text{ weakly in } H^1(\Omega^+), \text{ strongly in } L^2(\Omega^+),
\end{gather}

\begin{gather}
\label{rc1}
\delta \mathcal T_{\e} (\widetilde{\mathcal R}_{\e}) \rightharpoonup \widetilde{\mathcal R} \quad \text{ weakly in } L^2(\omega, H^1 (0, 1)), \text{ such that }\\
\label{rc2}
\widetilde{\mathcal R} (x', 0) = \widetilde{\mathcal R} (x', 1) = 0, \text{ for a.e. } x' \in \omega,
\end{gather}

\begin{gather}
\label{uc2}
\frac{\delta}{r}(\mathcal T_{\e}(\widetilde{\mathcal U}_{\e,3}) - \widetilde{\mathcal U}_{\e,3}(\cdot, \cdot, 0)) \rightharpoonup \widetilde{\mathcal U}^{'}_3 \quad \text{ weakly in } L^2(\omega, H^1 (0, 1)),\\
\label{uc3}
\mathcal T_{\e}(\widetilde{\mathcal U}_{\e,3}) \rightharpoonup \widetilde{\mathcal U}_3 \quad \text{ weakly in } L^2(\omega, H^1 (0, 1)), \text{ such that }\\
\label{uc4}
\widetilde{\mathcal U}_3(\cdot, \cdot, \cdot) = \widetilde{\mathcal U}_3(\cdot, \cdot, 0) = {u_3^-}_{|\Sigma} = \widetilde{\mathcal U}_3( \cdot, \cdot, 1) = {u_3^+}_{|\Sigma}, \text{ a.e. in} \;\;  \omega\times (0,1),\\
\label{uc1}
\mathcal T_{\e}(\widetilde{\mathcal U}_{\e,\alpha}) \rightharpoonup \widetilde{\mathcal U}_{\alpha} \quad \text{ weakly in } L^2(\omega, H^1 (0, 1)), \quad \text{ for } \alpha = \overline{1, 2}, \text{ such that }\\
\label{uc5}
\widetilde{\mathcal U}_{\alpha}( \cdot, \cdot, 0) = {u_{\alpha}^-}_{|\Sigma}, \quad \widetilde{\mathcal U}_{\alpha}(\cdot, \cdot, 1) = {u_{\alpha}^+}_{|\Sigma} \text{ a.e. in } \; \omega,\\
\label{uc51}
\frac{\partial \widetilde{\mathcal U}_{\alpha}}{\partial X_3} ( \cdot, \cdot, 0) = \frac{\partial \widetilde{\mathcal U}_{\alpha}}{\partial X_3} (\cdot, \cdot, 1) = 0\text{ a.e. in } \; \omega,\\
\label{uc6}
\frac{\partial \widetilde{\mathcal U}_1}{\partial X_3} = \widetilde{\mathcal R}_2, \quad \frac{\partial \widetilde{\mathcal U}_2}{\partial X_3} = -\widetilde{\mathcal R}_1\text{ a.e. in } \; \omega\times(0,1),
\end{gather}

\begin{gather}
\label{limwarp}
\frac{\delta^2}{r^2}\mathcal T_{\e}^{'}(\widetilde{\bar u}_{\e}) \rightharpoonup \widetilde{\bar u} \quad \text{ weakly in } L^2(\omega \times (0, 1), H^1(D_1)),\\
\label{limwarp2}
\frac{\delta}{r}\mathcal T_{\e}^{'}(\widetilde{\bar u}_{\e}) \rightharpoonup 0 \quad \text{ weakly in } L^2(\omega, H^1(B_1)),
\end{gather} 

\begin{equation}\label{Z}
\frac{\delta}{r} \left( \frac{\partial \mathcal T_{\e} (\widetilde{\mathcal U}_{\e})}{\partial X_3} - \delta \mathcal T_{\e}(\widetilde{\mathcal R}_{\e} \wedge e_3) \right) \rightharpoonup Z \quad \text{ weakly in } L^2 (\omega \times (0, 1)).
\end{equation}
\end{proposition}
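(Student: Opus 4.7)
The strategy is to extract, by a diagonal procedure, weakly convergent subsequences from every bounded family produced by Lemmas~\ref{lem31}, \ref{lem32} and \ref{unfest}, and then to identify the six compatibility conditions by passing to the limit in the associated error estimates. First, the uniform bound \eqref{Eq327}, the Korn inequality \eqref{Eq19}, and \eqref{fineste} applied to the reflection extension of $u_\e$ from $\Omega_\e^+$ into $\omega\times(\d,L+\d)$ bound the bulk fields in $H^1(\Omega^-)$ and $H^1(\Omega^+)$ respectively; Rellich compactness together with the preservation of the Dirichlet condition on $\Gamma$ yields \eqref{ulim1}--\eqref{ulim2}. The six norm bounds in Lemma~\ref{unfest}, combined with weak compactness in the appropriate Bochner spaces, then produce (after a further subsequence) the weak limits in \eqref{rc1}, \eqref{uc2}, \eqref{uc3}, \eqref{uc1}, \eqref{limwarp} and \eqref{Z}.

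Next I turn to the traces at $X_3=0,1$. Observe that $\mathcal T_\e(\widetilde{\mathcal U}_\e)(\cdot,\cdot,0)=\widetilde{\mathcal U}_\e(\cdot,\cdot,0)$ on $\widehat\omega_\e$; combining the interface estimate \eqref{estu1} with the strong $L^2(\Sigma)$--trace convergence of $u_\e$ to $u^-$ coming from the compact Sobolev trace map, I get $\widetilde{\mathcal U}_{\e,\alpha}(\cdot,\cdot,0)\to u^-_{\alpha|\Sigma}$ strongly in $L^2(\omega)$, which identifies the $X_3=0$ trace of $\widetilde{\mathcal U}_\alpha$ and gives the first half of \eqref{uc5}; the second half follows analogously from \eqref{estu2} applied to the extended field. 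For the third component, the sharper bound in Lemma~\ref{unfest}(2) scales as $r/\d\to 0$ under both regimes (i) and (ii), so $\mathcal T_\e(\widetilde{\mathcal U}_{\e,3})-\widetilde{\mathcal U}_{\e,3}(\cdot,\cdot,0)\to 0$ strongly in $L^2(\omega\times(0,1))$; passing to the weak limit, $\widetilde{\mathcal U}_3$ is independent of $X_3$ and equals $u^-_{3|\Sigma}$ at $X_3=0$, while estimate \eqref{est_cas2} (also tending to $0$ in both regimes) closes the loop $u^-_{3|\Sigma}=u^+_{3|\Sigma}$ and yields \eqref{uc4}.

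It remains to establish \eqref{rc2}, \eqref{uc6}, \eqref{uc51} and \eqref{limwarp2}. Multiplying \eqref{Diff}$_1$--\eqref{Diff}$_2$ by $\d$ shows that $\d\widetilde{\mathcal R}_\e(\cdot,\cdot,0)$ and $\d\widetilde{\mathcal R}_\e(\cdot,\cdot,\d)$ both tend to $0$ in $L^2(\omega,\R^3)$ in the two regimes, hence the traces of $\widetilde{\mathcal R}$ at $X_3=0,1$ vanish, giving \eqref{rc2}. The Kirchhoff--Love relation \eqref{uc6} comes from Lemma~\ref{unfest}(4): the right-hand side of that bound is of order $r/\d\to 0$, so $\partial_{X_3}\mathcal T_\e(\widetilde{\mathcal U}_\e)-\d\mathcal T_\e(\widetilde{\mathcal R}_\e)\wedge e_3\to 0$ strongly in $L^2(\omega\times(0,1))$, and the two terms converge weakly to $\partial_{X_3}\widetilde{\mathcal U}$ and $\widetilde{\mathcal R}\wedge e_3$ respectively. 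The clamped condition \eqref{uc51} is then immediate from \eqref{uc6} and \eqref{rc2}. Finally, \eqref{limwarp} is read off directly from Lemma~\ref{unfest}(5), and \eqref{limwarp2} follows by writing $\frac{\d}{r}\mathcal T^{'}_\e(\widetilde{\bar u}_\e)=\frac{r}{\d}\cdot\frac{\d^2}{r^2}\mathcal T^{'}_\e(\widetilde{\bar u}_\e)$ and using $r/\d\to 0$.

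The main obstacle is the rigorous identification of traces performed in the second paragraph: the weakly convergent unfolded fields live on the thin cells $\widehat\omega_\e\times(0,\d)$, whereas the bulk traces come from Sobolev trace theory on fixed domains, and these two notions must be matched at the vanishing cell set $\Lambda_\e$ on which the unfolding operator is zero by convention. The interface estimates collected in Lemma~\ref{lem32} are tailored precisely for this juncture, so the proof reduces to carefully chaining them with the strong $L^2(\Sigma)$--trace convergence supplied by Rellich's theorem.
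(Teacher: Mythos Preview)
Your proposal is correct and follows essentially the same route as the paper: extract weak limits from the uniform bounds of Lemma~\ref{unfest} and \eqref{Eq327}, then identify traces via \eqref{estu1}--\eqref{estu2} and \eqref{Diff}$_1$--\eqref{Diff}$_2$, and finally read off the Kirchhoff--Love relations \eqref{uc6}--\eqref{uc51} from the fact that the right-hand side of Lemma~\ref{unfest}(4) is $O(r/\delta)\to 0$. One small remark on \eqref{limwarp2}: your factorisation $\frac{\delta}{r}\mathcal T^{'}_\e(\widetilde{\bar u}_\e)=\frac{r}{\delta}\cdot\frac{\delta^2}{r^2}\mathcal T^{'}_\e(\widetilde{\bar u}_\e)$ gives strong convergence to $0$ only in $L^2(\omega\times(0,1),H^1(D_1))$, whereas the stated space is $L^2(\omega,H^1(B_1))$; to conclude you must also invoke the $X_3$-derivative bound Lemma~\ref{unfest}(6), which makes $\frac{\delta}{r}\mathcal T^{'}_\e(\widetilde{\bar u}_\e)$ bounded in $L^2(\omega,H^1(B_1))$ and hence forces the weak limit there to coincide with the $L^2$ limit, namely $0$.
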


\begin{proof}
Convergences \eqref{ulim1}, \eqref{ulim2}, \eqref{rc1}, \eqref{uc2}, \eqref{uc3}, \eqref{uc1},\eqref{limwarp} and \eqref{Z} follow from the estimate \eqref{Eq327} and those in Lemma \ref{unfest}. 

\noindent Equalities \eqref{rc2} are the consequences of \eqref{Diff}$_1$-\eqref{Diff}$_2$.
To obtain \eqref{uc6} take into account that from \eqref{Z} we have
\begin{equation*}
\frac{\partial \widetilde{\mathcal U}}{\partial X_3} - \widetilde{\mathcal R} \wedge e_3 = \left(
\begin{array}{c}
\displaystyle \frac{\partial \widetilde{\mathcal U}_1}{\partial X_3} - \widetilde{\mathcal R}_2\\
\displaystyle \frac{\partial \widetilde{\mathcal U}_2}{\partial X_3} + \widetilde{\mathcal R}_1\\
\displaystyle \frac{\partial \widetilde{\mathcal U}_3}{\partial X_3}
\end{array}
\right)=0.
\end{equation*}
Then \eqref{rc2} yields \eqref{uc51}. Equations \eqref{uc4} are the consequences of $\displaystyle \frac{\partial \widetilde{\mathcal U}_3}{\partial X_3}=0$ and the estimates \eqref{estu1}, \eqref{estu2}.
Again due to \eqref{estu1}, \eqref{estu2}, we obtain 
\begin{gather*}
\widetilde{\mathcal U}_{\alpha} (x', 0) = {u_{\alpha}^-}_{|\Sigma}(x'),\qquad 
\widetilde{\mathcal U}_{\alpha} (x', 1) = {u_{\alpha}^+}_{|\Sigma}(x'),\quad \text{ for a.e. } x' \in \omega.
\end{gather*}
From Lemma \ref{unfest} we have $\ds \| \mathcal T_{\e}^{'}(\widetilde{\bar u}_{\e}) \|_{L^2(\omega, H^1( B_1))} \leq C \frac{r}{\delta}$ from  which and \eqref{limwarp} we deduce \eqref{limwarp2}.
\end{proof}

The strain tensor of the displacement $u_\e$ is
\begin{gather*}
\mathcal T_{\e}^{'}\left((\widetilde{\nabla  u_{\e}})_S\right)_{ij} = \mathcal T_{\e}^{'}((\widetilde{\nabla \bar u_{\e}})_S)_{ij}, \quad i, j = 1, 2,\\
\mathcal T_{\e}^{'}\left((\widetilde{\nabla u_{\e}})_S\right)_{13} = \ds \frac{1}{2}\left(\left(\frac{1}{\delta}\frac{\partial \mathcal T_{\e}(\widetilde{\mathcal U}_{\e,1})}{\partial X_3} - \mathcal T_{\e}(\widetilde{\mathcal R}_{\e,2})\right) - \frac{r}{\delta} \frac{\partial \mathcal T_{\e}(\widetilde{\mathcal R}_{\e,3})}{\partial X_3} X_2\right) + \mathcal T_{\e}^{'}((\widetilde{\nabla \bar u_{\e}})_S)_{13},\\
\mathcal T_{\e}^{'}\left((\widetilde{\nabla u_{\e}})_S\right)_{23} = \ds \frac{1}{2}\left(\left(\frac{1}{\delta}\frac{\partial \mathcal T_{\e} (\widetilde{\mathcal U}_{\e,2})}{\partial X_3} + \mathcal T_{\e}(\widetilde{\mathcal R}_{\e,1})\right) + \frac{r}{\delta}\frac{\partial \mathcal T_{\e}(\widetilde{\mathcal R}_{\e,3})}{\partial X_3} X_1\right) + \mathcal T_{\e}^{'}((\widetilde{\nabla \bar u_{\e}})_S)_{23},\\
\mathcal T_{\e}^{'}\left((\widetilde{\nabla u_{\e}})_S\right)_{33} = \ds \frac{1}{\delta}\frac{\partial \mathcal T_{\e}(\widetilde{\mathcal U}_{\e,3})}{\partial X_3} + \frac{r}{\delta} \frac{\partial \mathcal T_{\e}(\widetilde{\mathcal R}_{\e,1})}{\partial X_3} X_2  - \frac{r}{\delta}\frac{\partial \mathcal T_{\e}(\widetilde{\mathcal R}_{\e,2})}{\partial X_3} X_1 + \mathcal T_{\e}^{'}((\widetilde{\nabla \bar u_{\e}})_S)_{33}.
\end{gather*}
Define the field $\widetilde{\bar u'}\in L^2(\omega\times (0,1), H^1(D_1,\R^3))$ by
\begin{gather*}
\widetilde{\bar u'}_{\alpha} = \widetilde{\bar u}_{\alpha},\qquad \widetilde{\bar u'}_3 = \widetilde{\bar u}_3 + X_1 Z_1 + X_2 Z_2.
\end{gather*}
Then
\begin{gather*}
\frac{\partial \widetilde{\bar u'}_3}{\partial X_1} = \frac{\partial \widetilde{\bar u}_3}{\partial X_1} + Z_1,\qquad 
\frac{\partial \widetilde{\bar u'}_3}{\partial X_2} = \frac{\partial \widetilde{\bar u}_3}{\partial X_2} + Z_2.
\end{gather*}
As an immediate consequence of Proposition \ref{prop41}, we have
\begin{lemma}
\label{symmX}
There exist a symmetric matrix field $X \in L^2 (\omega \times B_1, \R^9)$ and a field \\
$\widetilde{\bar u'} \in L^2 (\omega \times (0,1), H^1 (D_1 , \R^3))$, such that

\begin{equation*}
\frac{\delta^2}{r} \mathcal T_{\e}^{'}\left((\widetilde{\nabla u_{\e}})_S\right) \rightharpoonup X \quad \text{ weakly in } L^2(\omega \times B_1, \R^9),
\end{equation*}
where $X$ is defined by
\begin{gather}
\label{beams-grad}
\begin{array}{l}
\ds X_{ij} =\frac{1}{2} \left(\frac{\partial \widetilde{\bar u}'_i}{\partial X_j} + \frac{\partial \widetilde{\bar u}'_j}{\partial X_i} \right), \quad i, j = 1, 2,\\
\ds X_{13} = X_{31} = \frac{1}{2}\left(\frac{\partial \widetilde{\bar u'}_3}{\partial X_1} - \frac{\partial \widetilde{\mathcal R}_3}{\partial X_3} X_2\right),\\
\ds X_{23} = X_{32} = \frac{1}{2}\left(\frac{\partial \widetilde{\bar u'}_3}{\partial X_2} + \frac{\partial \widetilde{\mathcal R}_3}{\partial X_3} X_1\right),\\
\ds X_{33} = \frac{\partial \widetilde{\mathcal U}_3'}{\partial X_3} - \frac{\partial^2 \widetilde{\mathcal U}_2}{\partial X_3^2} X_2  - \frac{\partial^2 \widetilde{\mathcal U}_1}{\partial X_3^2} X_1.
\end{array}
\end{gather}
\end{lemma}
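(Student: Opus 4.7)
The plan is to identify each entry of the limit tensor $X$ by multiplying the four displayed formulas for $\mathcal T_{\e}^{'}\big((\widetilde{\nabla u_{\e}})_S\big)_{ij}$ by $\delta^{2}/r$ and passing to the weak limit using the convergences assembled in Proposition~\ref{prop41}. First I would establish that the scaled symmetric gradient is bounded in $L^{2}(\omega\times B_{1},\R^{9})$: by Lemma~\ref{lpr}(3) and the identity $\|\widetilde{(\nabla u_\e)_S}\|_{L^2(\omega\times B_{r,\delta})}^{2}=\e^{2}\|(\nabla u_\e)_S\|_{L^2(\Omega_\e^i)}^{2}$, the a priori bound \eqref{Eq327} yields $\|\mathcal T_\e^{'}((\widetilde{\nabla u_\e})_S)\|_{L^2(\omega\times B_1)}\leq C\e/(r\sqrt{\delta})$, so the scaling $\delta^{2}/r$ combined with the critical regime assumption \eqref{assump} gives a bounded family. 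Extracting a subsequence produces a weak $L^{2}$-limit $X$, which is symmetric as a weak limit of symmetric matrices.

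For the in-plane block ($i,j\in\{1,2\}$), only the warping part contributes: the chain-rule identities of Lemma~\ref{lpr}(4) give
\[
\frac{\delta^{2}}{r}\mathcal T_{\e}^{'}\big((\widetilde{\nabla \bar u_{\e}})_S\big)_{ij}=\frac{1}{2}\Big(\frac{\partial}{\partial X_{j}}\Big(\tfrac{\delta^{2}}{r^{2}}\mathcal T_{\e}^{'}(\widetilde{\bar u}_{\e,i})\Big)+\frac{\partial}{\partial X_{i}}\Big(\tfrac{\delta^{2}}{r^{2}}\mathcal T_{\e}^{'}(\widetilde{\bar u}_{\e,j})\Big)\Big),
\]
which converges weakly by \eqref{limwarp} to the symmetric gradient of $\widetilde{\bar u}$ in the horizontal variables; since $\widetilde{\bar u}'_{\alpha}=\widetilde{\bar u}_{\alpha}$ this matches \eqref{beams-grad}$_{1}$.

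For the mixed entries $X_{13},X_{23}$ I would split the three groups of terms in the displayed formula. The first group, $\frac{1}{2}\big(\frac{1}{\delta}\partial_{X_3}\mathcal T_{\e}(\widetilde{\mathcal U}_{\e,\alpha})\mp \mathcal T_{\e}(\widetilde{\mathcal R}_{\e,\beta})\big)$, after multiplication by $\delta^{2}/r$ becomes $\frac{1}{2}\cdot\frac{\delta}{r}\big(\partial_{X_3}\mathcal T_{\e}(\widetilde{\mathcal U}_{\e})-\delta\mathcal T_{\e}(\widetilde{\mathcal R}_{\e})\wedge e_{3}\big)_{\alpha}$, converging to $\frac{1}{2}Z_{\alpha}$ by \eqref{Z}. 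The rotation term $\frac{r}{\delta}\partial_{X_3}\mathcal T_{\e}(\widetilde{\mathcal R}_{\e,3})X_{\beta}$ scales to $\delta\,\partial_{X_3}\mathcal T_{\e}(\widetilde{\mathcal R}_{\e,3})X_{\beta}$, whose weak limit is $\partial_{X_3}\widetilde{\mathcal R}_{3}\,X_{\beta}$ by \eqref{rc1}. The warping part expands, via the chain rule, into one term with prefactor $\delta/r$ applied to $\mathcal T_{\e}^{'}(\widetilde{\bar u}_{\e,\alpha})$ — which vanishes in the limit by \eqref{limwarp2} — and a second term $\partial_{X_\alpha}(\tfrac{\delta^{2}}{r^{2}}\mathcal T_{\e}^{'}(\widetilde{\bar u}_{\e,3}))$ converging to $\partial_{X_\alpha}\widetilde{\bar u}_{3}$ by \eqref{limwarp}. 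Setting $\widetilde{\bar u}'_{3}=\widetilde{\bar u}_{3}+X_{1}Z_{1}+X_{2}Z_{2}$ absorbs the $Z_{\alpha}$ contributions into $\partial_{X_\alpha}\widetilde{\bar u}'_{3}$, yielding \eqref{beams-grad}$_{2,3}$.

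For $X_{33}$ I would handle the four terms analogously: $\frac{\delta}{r}\partial_{X_3}\mathcal T_{\e}(\widetilde{\mathcal U}_{\e,3})\rightharpoonup \partial_{X_3}\widetilde{\mathcal U}'_{3}$ by \eqref{uc2}; $\pm\delta\,\partial_{X_3}\mathcal T_{\e}(\widetilde{\mathcal R}_{\e,\alpha})X_{\beta}\rightharpoonup \pm\partial_{X_3}\widetilde{\mathcal R}_{\alpha}\,X_{\beta}$ by \eqref{rc1}; and the warping piece $\frac{\delta}{r}\partial_{X_3}\mathcal T_{\e}^{'}(\widetilde{\bar u}_{\e,3})$ vanishes by \eqref{limwarp2}. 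The final rewriting uses \eqref{uc6}, namely $\widetilde{\mathcal R}_{2}=\partial_{X_3}\widetilde{\mathcal U}_{1}$ and $\widetilde{\mathcal R}_{1}=-\partial_{X_3}\widetilde{\mathcal U}_{2}$, to convert $\partial_{X_3}\widetilde{\mathcal R}_{\alpha}$ into $-\partial^{2}_{X_3}\widetilde{\mathcal U}_{\beta}$, delivering \eqref{beams-grad}$_{4}$. The main (mild) obstacle is purely bookkeeping: matching scalings so that each prefactor lands exactly on one of the already-established convergences of Proposition~\ref{prop41}, and recognising that the auxiliary field $Z$, which appears first as an abstract limit in \eqref{Z}, is exactly what must be added as a linear correction to $\widetilde{\bar u}_{3}$ in order to package the formulas into the compact form \eqref{beams-grad}.
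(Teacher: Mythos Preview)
Your proposal is correct and follows exactly the approach of the paper, which treats the lemma as an immediate consequence of Proposition~\ref{prop41}: the paper displays the four formulas for $\mathcal T_{\e}^{'}\big((\widetilde{\nabla u_{\e}})_S\big)_{ij}$, introduces the corrected warping $\widetilde{\bar u'}_3=\widetilde{\bar u}_3+X_1Z_1+X_2Z_2$, and then states the lemma without further argument. You have simply written out the term-by-term limit passage that the paper leaves implicit, with the correct scalings and the correct invocations of \eqref{rc1}, \eqref{uc2}, \eqref{uc6}, \eqref{limwarp}, \eqref{limwarp2} and \eqref{Z}.
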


\section{The limit problem}
\subsection{The equations for the domain $\Omega_{\e}^i$}
Denote by $\Theta$ the weak limit of the unfolded stress tensor $\ds\frac{\delta^2}{r} \mathcal T_{\e}^{'} (\sigma_{\e})$ in $L^2 (\omega \times B_1, \R^9)$:
$$\frac{\delta^2}{r} \mathcal T_{\e}^{'} (\sigma_{\e}) \rightharpoonup \Theta, \quad \text{ weakly in } L^2 (\omega \times B_1, \R^9).$$
Proceeding exactly as in Section 6.1 of \cite{griso2} and Section 8.1 of \cite{griso3}, we first derive $\widetilde{\bar u}'$ and this gives
\begin{gather*}
\widetilde{\bar u'}_1 = \nu^m \left( - X_1 \frac{\partial \widetilde{\mathcal U}^{'}_3}{\partial X_3} + \frac{X_1^2 - X_2^2}{2} \frac{\partial^2 \widetilde{\mathcal U}_1}{\partial X_3^2} + X_1 X_2 \frac{\partial^2 \widetilde{\mathcal U}_2}{\partial X_3^2} \right),\\
\widetilde{\bar u'}_2 = \nu^m \left( - X_2 \frac{\partial \widetilde{\mathcal U}^{'}_3}{\partial X_3} + X_1 X_2 \frac{\partial^2 \widetilde{\mathcal U}_1}{\partial X_3^2} + \frac{X_2^2 - X_1^2}{2} \frac{\partial^2 \widetilde{\mathcal U}_2}{\partial X_3^2} \right).
\end{gather*}
Similarly, the same computations as in Section 6.1 of \cite{griso2} lead to $\widetilde{\bar u'}_3 = 0$.

As a consequence of Lemma \ref{symmX} we obtain 
\begin{equation}
\label{theta}
\begin{aligned}
&\Theta_{11} = \Theta_{22} = \Theta_{12} = 0,&\\
&\Theta_{13} = -\mu^m X_2 \frac{\partial \widetilde{\mathcal R}_3}{\partial X_3}, \quad \Theta_{23} = \mu^m X_1 \frac{\partial \widetilde{\mathcal R}_3}{\partial X_3},&\\
&\Theta_{33} = E^m \Big( \frac{\partial \widetilde{\mathcal U}^{'}_3}{\partial X_3} - X_1\frac{\partial^2 \widetilde{\mathcal U}_1}{\partial X_3^2} - X_2 \frac{\partial^2 \widetilde{\mathcal U}_2}{\partial X_3^2} \Big).&
\end{aligned}
\end{equation}

\begin{proposition}\label{prop61}
 $(\widetilde{\mathcal U}_1, \widetilde{\mathcal U}_2)$ satisfy the variational formulation
\begin{equation}\label{pb62}
\begin{aligned}
 \frac{\pi \kappa_0^4}{4 \kappa_1^3} E^m\int_0^1 \frac{\partial^2 \widetilde{\mathcal U}_{\alpha}}{\partial X_3^2}(x', X_3) \frac{d^2 \varphi_{\alpha}}{d X_3^2}(X_3) \, dX_3 = \int_0^1 \widetilde{F}^m_{\alpha} (x',X_3)\varphi_{\alpha}(X_3)\, dX_3,\\
  \quad \forall \varphi_{\alpha} \in H_0^2 (0, 1),\qquad \hbox{for a.e. } x'\in \omega
\end{aligned}
\end{equation}
where
\begin{equation*}
\widetilde{F}^m_{\alpha} (x', X_3) = \int_{D_1} F^m_{\alpha}(x', X) dX_1 \, dX_2 \quad \text{a.e.  in } \omega \times (0, 1) \quad \alpha = 1,2.
\end{equation*} Furthermore $\widetilde{\mathcal R}_3=0$ and there exists $a\in L^2(\omega)$ such that
$$\widetilde{\mathcal U}^{'}_3(x',X_3)=a(x')X_3\qquad \text{a.e.  in } \omega \times (0, 1) .$$
\end{proposition}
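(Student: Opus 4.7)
The proof proceeds by inserting three families of beam-adapted test displacements in \eqref{wf}, unfolding with $\mathcal T'_\e$, and passing to the limit using $(\d^2/r)\mathcal T'_\e(\sigma_\e) \rightharpoonup \Theta$ together with the explicit form \eqref{theta}. A Bernoulli--Euler bending ansatz yields \eqref{pb62}, a torsion ansatz yields $\widetilde{\mathcal R}_3 = 0$, and an axial ansatz yields the form of $\widetilde{\mathcal U}'_3$. Throughout, the conversion factor between integrals on $\Omega^i_\e$ and two-scale integrals on $\omega \times B_1$ is $r^2\d/\e^2$, while the forcing term carries the additional factor $\e^2/(r^2\d)$ from \eqref{forassump}.

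For \eqref{pb62}, fix $\psi \in \mathcal C^1_c(\omega)$, $\phi \in H^2_0(0,1)$, and $\alpha \in \{1,2\}$, and in each rod $\e\mathbf i + B_{r,\d}$ set
$$
\varphi_\e(x) = \psi(\e\mathbf i)\Bigl[\phi(x_3/\d)\,e_\alpha - \tfrac{r}{\d}\phi'(x_3/\d)\,X_\alpha\,e_3\Bigr],\qquad X_\alpha = (x_\alpha - \e\mathbf i_\alpha)/r,
$$
extended by zero outside $\Omega^i_\e$. The $H^2_0$ condition on $\phi$ makes $\varphi_\e \in \V_{r,\e,\d}$. By the classical Bernoulli--Euler cancellation the shears $(\nabla\varphi_\e)_{S,\alpha 3}$ vanish, and the only non-zero strain is $(\nabla\varphi_\e)_{S,33} = -\psi(\e\mathbf i)(r/\d^2)\phi''(x_3/\d)X_\alpha$. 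Inserting this in \eqref{wf}, unfolding, and using the identities $\int_{D_1} X_\alpha X_\beta\,dX_1\,dX_2 = (\pi/4)\delta_{\alpha\beta}$ and $\int_{D_1} X_\alpha\,dX_1\,dX_2 = 0$, the overall prefactor $r^4/(\e^2\d^3) \to \kappa_0^4/\kappa_1^3$ in both critical regimes, and the LHS converges to $(\pi E^m \kappa_0^4/(4\kappa_1^3))\int_\omega \psi \int_0^1 \phi''(\partial^2\widetilde{\mathcal U}_\alpha/\partial X_3^2)\,dX_3\,dx'$. On the RHS, the scaling \eqref{forassump} of $f_\e$ cancels the conversion factor exactly, yielding $\int_\omega \psi \int_0^1 \phi\,\widetilde F^m_\alpha\,dX_3\,dx'$, while the $e_3$-correction of $\varphi_\e$ is subleading of order $r/\d \to 0$. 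Arbitrariness of $\psi$ gives \eqref{pb62}.

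For $\widetilde{\mathcal R}_3 = 0$ and the axial statement, use test functions of small amplitude $\e^2\d^2/r^3$, which tends to zero in both critical regimes since $2+2\eta_1 - 3\eta_0 > 0$ on the critical edge of \eqref{triangle}. For torsion, take $\varphi_\e = (\e^2\d^2/r^3)\psi(\e\mathbf i)\phi(x_3/\d)[-X_2 e_1 + X_1 e_2]$ with $\phi \in H^1_0(0,1)$; only $(\nabla\varphi_\e)_{S,13}, (\nabla\varphi_\e)_{S,23}$ are non-zero, and using \eqref{theta} and $\int_{D_1}(X_1^2 + X_2^2)\,dX = \pi/2$, the LHS of \eqref{wf} converges to $(\pi\mu^m/2)\int_\omega \psi \int_0^1 \phi'(\partial\widetilde{\mathcal R}_3/\partial X_3)\,dX_3\,dx'$ while the RHS vanishes (the forcing is multiplied by the vanishing amplitude). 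This forces $\partial\widetilde{\mathcal R}_3/\partial X_3$ to be $X_3$-independent; combined with \eqref{rc2}, $\widetilde{\mathcal R}_3 \equiv 0$ follows. For the axial case, take $\varphi_\e = (\e^2\d^2/r^3)\psi(\e\mathbf i)\phi(x_3/\d)e_3$. Only $(\nabla\varphi_\e)_{S,33}$ is non-zero; integrating $\Theta_{33}$ over $D_1$, the bending cross-terms vanish and $\int_{D_1}\Theta_{33}\,dX = \pi E^m (\partial\widetilde{\mathcal U}'_3/\partial X_3)$. Hence the LHS converges to $\pi E^m \int_\omega \psi \int_0^1 \phi'(\partial\widetilde{\mathcal U}'_3/\partial X_3)\,dX_3\,dx'$ and the RHS vanishes, so $\partial\widetilde{\mathcal U}'_3/\partial X_3$ depends only on $x'$. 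The boundary datum $\widetilde{\mathcal U}'_3(\cdot, 0) = 0$, inherited from \eqref{uc2} (the approximating sequence vanishes identically at $X_3 = 0$), then gives $\widetilde{\mathcal U}'_3(x', X_3) = a(x') X_3$ with $a \in L^2(\omega)$.

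The main obstacle is the scaling bookkeeping: one must choose the amplitude of each test family so that both sides of \eqref{wf} have finite, non-trivial limits while the LHS correctly captures the target limit field. The bending family must be of order $1$ so that the LHS picks up $\partial^2\widetilde{\mathcal U}_\alpha/\partial X_3^2$ and the prefactor $r^4/(\e^2\d^3) \to \kappa_0^4/\kappa_1^3$ produces the coefficient in \eqref{pb62}; the torsion and axial families must have amplitude $\e^2\d^2/r^3 \to 0$ so that their LHS picks up the respective shear/axial energy but the inhomogeneous forcing becomes negligible in the limit. Checking the sign of the exponent $2+2\eta_1 - 3\eta_0$ along the critical edge of \eqref{triangle} is routine.
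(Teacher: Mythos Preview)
Your proof is correct and follows essentially the same route as the paper. The paper packages the bending, axial, and torsion ans\"atze into a single test displacement with the axial and torsion parts carrying the amplitude $r/\delta$ (which equals your $\e^2\delta^2/r^3$ up to the constant $\kappa_0^4/\kappa_1^3$ on the critical edge), then reads off the three conclusions from the resulting localized identity; your separation into three families and the scaling bookkeeping are equivalent.
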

\begin{proof}
\textit{Step 1.} We obtain the limit equations in $\Omega_{\e}^i$.

We will use the following test function:
\begin{equation*}
v_{\e} (x) = \frac{r}{\delta}\psi (\varepsilon \xi)\left(
\begin{array}{c}
\ds \frac{\delta}{r}\varphi_1 \left( \frac{x_3}{\delta}\right) - \frac{x_2 - \e \xi_2}{r} \varphi_4 \left( \frac{x_3}{\delta} \right)\\[2mm]
\ds \frac{\delta}{r}\varphi_2 \left( \frac{x_3}{\delta}\right) + \frac{x_1 - \e \xi_1}{r} \varphi_4 \left( \frac{x_3}{\delta} \right)\\[2mm]
\ds \varphi_3 \left(\frac{x_3}{\delta}\right) - \frac{x_1 - \e \xi_1}{r} \frac{d \varphi_1}{d X_3} \left( \frac{x_3}{\delta}\right) - \frac{x_2 - \e \xi_2}{r} \frac{d \varphi_2}{d X_3} \left(\frac{x_3}{\delta}\right) 
\end{array}
\right),\qquad \ds\xi = \left[ \frac{x'}{\e} \right]_Y,
\end{equation*}
where $\psi \in C^{\infty}_c (\omega)$, $\varphi_3$ and $\varphi_4  \in H_0^1 (0, 1)$, $\varphi_{1}$ and  $\varphi_{2} \in H_0^2 (0, 1)$.
Computation of the symmetric strain tensor gives
\begin{equation*}
(\nabla v_{\e})_S = \frac{r}{\delta^{2}}\psi(\e \xi)\left(
\begin{array}{ccc}
0 & 0 & \ds -\frac{1}{2} \frac{x_2 - \e \xi_2}{r } \frac{d \varphi_4}{d X_3}\\[2mm]
\ldots & 0 & \ds \frac{1}{2}\frac{x_1 - \e \xi_1}{r  } \frac{d \varphi_4}{d X_3} \\[2mm]
\ldots & \ldots & \ds   \left( \frac{d \varphi_3}{d X_3} - \frac{x_1 - \e \xi_1}{r} \frac{d^2 \varphi_1}{d X_3^2} - \frac{x_2 - \e \xi_2}{r} \frac{d^2 \varphi_2}{d X_3^2} \right)\\
\end{array}
\right)\qquad \hbox{in } \e\xi+B_1.
\end{equation*}
Then
\begin{equation*}
\frac{\delta^2}{r} \mathcal T_{\e}^{'}((\widetilde{\nabla v_{\e}})_S) \rightarrow \psi(x')\left(
\begin{array}{ccc}
0 & 0 & \ds -\frac{1}{2} X_2 \frac{d \varphi_4}{d X_3}\\[2mm]
\ldots & 0 & \ds \frac{1}{2}X_1 \frac{d \varphi_4}{d X_3} \\[2mm]
\ldots & \ldots & \ds\frac{d \varphi_3}{d X_3} - X_1 \frac{d^2 \varphi_1}{d X_3^2} - X_2 \frac{d^2 \varphi_2}{d X_3^2}\\
\end{array}
\right) = V(x', X) \quad \text{ strongly in } L^2 (\omega \times B_1).
\end{equation*}
Moreover,
\begin{equation*}
\mathcal T_{\e}^{'} (\widetilde{v}_{\e}) \rightarrow \psi (x')\left(
\begin{array}{c}
\varphi_1 (X_3)\\
\varphi_2 (X_3)\\
0
\end{array}
\right) \quad \text{ strongly in } L^2 (\omega \times B_1).
\end{equation*}
Unfolding the integral over $\Omega_{\e}^i$ yields 
\begin{equation*}
\begin{aligned}
\int_{\Omega^i_{\e}} \sigma_{\e} : (\nabla v_{\e})_S dx &= \sum_{\xi \in \Xi_{\e}} \int_{\e \xi + B_{r, \delta}} \sigma_{\e} : (\widetilde{\nabla v_{\e}})_S dx  \\
&= r^2 \delta \sum_{\xi \in \Xi_{\e}} \int_{B_1} \mathcal T_{\e}^{'} (\sigma_{\e}) : \mathcal T_{\e}^{'}((\widetilde{\nabla v_{\e}})_S) dx' \, dX_1 \, dX_2 \, dX_3  \\
&= \frac{r^2 \delta}{\e^2}\int_{\omega \times B_1} \mathcal T_{\e}^{'} (\sigma_{\e}) : \mathcal T_{\e}^{'}((\widetilde{\nabla v_{\e}})_S) dx' \, dX_1 \, dX_2 \, dX_3.
\end{aligned}
\end{equation*}
In the same way for the integral involving the forces we get
\begin{equation*}
\int_{\Omega^i_{\e}} f_{\e}\cdot v_\e dx = \frac{r^2 \delta}{\e^2}\int_{\omega \times B_1} \mathcal T_{\e}^{'} (f_{\e})\cdot \mathcal T_{\e}^{'}(\widetilde{v}_{\e}) dx' \, dX_1 \, dX_2 \, dX_3. 
\end{equation*}
Passing to the limit gives
\begin{equation}
\label{unflim}
\frac{\kappa_0^4}{\kappa_1^3}\int_{\omega \times B_1} \Theta : V dx' \, dX = \sum_{\alpha=1}^2 \int_{\omega \times B_1}  F^m_\alpha (x', X) \psi(x')\varphi_\alpha( X) dx' \, dX.
\end{equation}
We can localize the above equation. Hence
\begin{multline}
\label{beamloc}
\frac{\pi \kappa_0^4}{4 \kappa_1^3}\mu^m \int_{\omega \times (0, 1)} \frac{\partial \widetilde{\mathcal R}_3}{\partial X_3} \frac{d \varphi_4}{d X_3} \psi  \, dx' \, dX_3 + \frac{\pi \kappa_0^4}{4 \kappa_1^3} E^m\int_{\omega \times (0, 1)} \left( 4 \frac{\partial \widetilde{\mathcal U}^{'}_3}{\partial X_3} \frac{d \varphi_3}{d X_3} + \frac{\partial^2 \widetilde{\mathcal U}_1}{\partial X_3^2} \frac{d^2 \varphi_1}{d X_3^2} + \frac{\partial^2 \widetilde{\mathcal U}_2}{\partial X_3^2} \frac{d^2 \varphi_2}{d X_3^2} \right) \psi \, dx' \, dX_3 \\
 = \int_{\omega \times (0, 1)} \left(\widetilde{F}^m_1 \varphi_1 +  \widetilde{F}^m_2 \varphi_2 \right) \psi dx' \, dX_3.
\end{multline}
The density of the tensor
product ${\cal C}^\infty_c(\omega )\otimes H^1_0(0,1)$ (resp. ${\cal C}^\infty_c(\omega )\otimes H^2_0(0,1)$) in  $L^2(\omega  ; H^1_0 (0,1))$ (resp. $L^2(\omega  ; H^2_0 (0,1))$)  implies 
\begin{equation}\begin{aligned}
\label{Beamloc}
&\frac{\pi \kappa_0^4}{4 \kappa_1^3}\mu^m \int_{\omega \times (0, 1)} \frac{\partial \widetilde{\mathcal R}_3}{\partial X_3} \frac{\partial \Phi_4}{\partial  X_3}  \, dx' \, dX_3 + \frac{\pi \kappa_0^4}{4 \kappa_1^3} E^m\int_{\omega \times (0, 1)} \left( 4 \frac{\partial \widetilde{\mathcal U}^{'}_3}{\partial X_3} \frac{\partial \Phi_3}{\partial X_3} + \frac{\partial^2 \widetilde{\mathcal U}_1}{\partial X_3^2} \frac{\partial^2 \Phi_1}{\partial X_3^2} + \frac{\partial^2 \widetilde{\mathcal U}_2}{\partial X_3^2} \frac{\partial^2 \Phi_2}{\partial X_3^2} \right) \, dx' \, dX_3 \\
& = \int_{\omega \times (0, 1)} \left(\widetilde{F}^m_1 \Phi_1 +  \widetilde{F}^m_2 \Phi_2 \right)  dx' \, dX_3\qquad \forall \Phi_3, \; \Phi_4\in L^2(\omega ; H^1_0(0,1)),\quad \forall \Phi_1, \; \Phi_2\in L^2(\omega ; H^2_0(0,1)).
\end{aligned}\end{equation}

\noindent \textit{Step 2.} We obtain $\widetilde{\mathcal R}_3$, $\widetilde{\mathcal U}^{'}_3$.

\noindent Since $\varphi_3\in H_0^1 (0, 1) $ is not in the right-hand side of the equation \eqref{beamloc} we obtain
\begin{equation}
\label{u3}
E^m \int_0^1 \frac{\partial \widetilde{\mathcal U}'_3}{\partial X_3} \frac{d \varphi_3}{d X_3} dX_3 = 0\quad \Rightarrow \quad \frac{\partial^2 \widetilde{\mathcal U}'_3}{\partial X_3^2} = 0 \quad \text{  a.e. in } \;\;  \omega\times (0,1).
\end{equation}
Moreover, we have $\widetilde{\mathcal U}'_3(x', 0)=0$ for a.e. $x' \in \omega$. Therefore, there exists $a\in L^2(\omega)$ such that
\begin{equation*}
 \widetilde{\mathcal U}'_3 (x', X_3)= X_3 a(x'), \quad \hbox{for a.e. } \; (x', X_3)\in \omega\times (0,1).
 \end{equation*}
Similarly, recalling that $\varphi_4\in H_0^1 (0, 1) $ and taking $\varphi_1 = \varphi_2 = \varphi_3 = 0$ in \eqref{beamloc} lead to
\begin{equation*}
 \mu^m \int_0^1 \frac{\partial \widetilde{\mathcal R}_3}{\partial X_3} \frac{d \varphi_4}{d X_3} \, dX_3 = 0\quad \Rightarrow \quad \frac{\partial^2 \widetilde{\mathcal R}_3}{\partial X_3^2} = 0 \quad \text{  a.e. in } \;\; \omega\times (0,1),
\end{equation*}
which together with the boundary conditions \eqref{rc2} (see Proposition \ref{limits_case1}) gives
$\widetilde{\mathcal R}_3 = 0$.
\end{proof}
The variational problem \eqref{pb62} and the boundary conditions \eqref{uc5}-\eqref{uc51} allow to determine ${\mathcal U}_\alpha$ ($\alpha=1,2$) in terms of the applied forces $\widetilde{F}^m_{\alpha} $ and the traces $u^{\pm}_{\alpha|\Sigma}$.

\subsection{The equations for the macroscopic domain}
Denote
$$
\begin{aligned}
&\V=\Big\{ v\in L^2(\Omega^-\cup\O^+ ; \R^3)\;|\; v_{|\Omega^-}\in H^1(\O^- ; \R^3)\enskip \hbox{and}\enskip v_{|\Omega^-}=0\;\;\hbox{on}\;\Gamma,\\
&\hskip 4.5cm v_{|\Omega^+}\in H^1(\O^+ ; \R^3)\enskip \hbox{and}\enskip v_{3|\Omega^+} =v_{3|\Omega^-} \;\;\hbox{on}\;\Sigma\Big\}\\
&\V_T=\Big\{ (v,{\cal V}_1, {\cal V}_2, {\cal V}_3, {\cal V}_4)\in \V\times [L^2(\Omega; H^2(0,1))]^2\times [L^2(\Omega; H^1(0,1))]^2\;|\;  \\
&{\mathcal V}_{\alpha}( \cdot, \cdot, 0) = {v_{\alpha}^-}_{|\Sigma}, \quad {\mathcal V}_{\alpha}(\cdot, \cdot, 1) = {v_{\alpha}^+}_{|\Sigma} \text{ a.e. in } \; \omega,\\
&{\mathcal V}_3( \cdot, \cdot, 0) ={\mathcal V}_4( \cdot, \cdot, 0) ={\mathcal V}_4( \cdot, \cdot, 1) =\frac{\partial {\mathcal V}_{\alpha}}{\partial X_3} ( \cdot, \cdot, 0) = \frac{\partial {\mathcal V}_{\alpha}}{\partial X_3} (\cdot, \cdot, 1) = 0\text{ a.e. in } \; \omega,\quad \alpha\in\{1,2\} \Big\}
\end{aligned}
$$

\noindent Let $\chi$ be in ${\cal C}^\infty_c(\R^2)$ such that $\chi(y)=1$ in   $D_1$ (the disc centered in $O=(0,0)$ and radius 1).
\medskip
\smallskip

{\it From now on we only consider the case (ii).}

\subsubsection{Determination of $\widetilde{\mathcal U}^{'}_3$}
\label{xi-det}
\begin{lemma}\label{lem62} The function $a$ introduced in Proposition \ref{prop61} is equal to 0 and
$$\widetilde{\mathcal U}^{'}_3(x',X_3)=0 \qquad \text{a.e.  in } \omega \times (0, 1) .$$
\end{lemma}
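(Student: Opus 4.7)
I would test the weak formulation \eqref{wf} against a carefully chosen displacement $v_\e\in\V_{r,\e,\d}$ that, in the limit, isolates the axial stretching $a$ of the beams. For $\varphi\in{\cal C}^\infty_c(\omega)$ set
$$v_\e(x)=\left\{\begin{aligned}
&0 &&\hbox{if } x\in \Omega^-,\\
&\frac{r}{\delta^2}\varphi(x')\,x_3\, e_3 &&\hbox{if } x\in \Omega_\e^i,\\
&\frac{r}{\delta}\varphi(x')\,e_3 &&\hbox{if } x\in \Omega_\e^+.
\end{aligned}\right.$$
This vanishes on $\Gamma$, vanishes at $x_3=0$ so it is continuous across $\Sigma$, and at $x_3=\delta$ both sides equal $(r/\delta)\varphi(x')e_3$ so it is continuous across $\Sigma^+_\delta$; hence $v_\e\in\V_{r,\e,\d}$. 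A direct computation using $r^4/(\d^3\e^2)=\kappa_0^4/\kappa_1^3$ and $r/\d\to 0$, valid in case (ii), shows $\|v_\e\|_V\le C$.

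The key step is the limit of $\int_{\Omega_\e^i}\sigma_\e:(\nabla v_\e)_S\,dx$, whose only non-trivial strain components in the beams are $\partial_3 v_{\e,3}=(r/\d^2)\varphi(x')$ and $\partial_\alpha v_{\e,3}=(r/\d^2)x_3\partial_\alpha\varphi$. Unfolding the axial piece $\ds\frac{r}{\d^2}\int_{\Omega_\e^i}\sigma_{\e,33}\varphi\,dx$ with $\mathcal T_\e^{'}$, using $\varphi(\e\xi+rX')=\varphi(\e\xi)+O(r)$ together with the weak limit $(\d^2/r)\mathcal T_\e^{'}(\sigma_\e)\rightharpoonup\Theta$, yields
$$\frac{r^4}{\d^3\e^2}\int_\omega \varphi(s)\int_{B_1}\Theta_{33}\,dX\,ds=\frac{\kappa_0^4}{\kappa_1^3}\int_\omega\varphi\int_{B_1}\Theta_{33}\,dX\,ds.$$
By \eqref{theta}, the bending contributions $X_\alpha\partial^2_{X_3}\widetilde{\mathcal U}_\alpha$ to $\Theta_{33}$ integrate to zero on $D_1$, while $\partial_{X_3}\widetilde{\mathcal U}_3^{'}=a(x')$ by Proposition \ref{prop61}, so this piece equals $\ds\frac{\pi\kappa_0^4 E^m}{\kappa_1^3}\int_\omega\varphi\,a\,dx'$. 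The shear integral carries the extra factor $x_3/\d\le 1$, giving a prefactor $r^4/(\d^2\e^2)=(\kappa_0^4/\kappa_1^2)\,\e^{(4\eta_0-2)/3}\to 0$, so it vanishes.

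All remaining contributions in \eqref{wf} go to zero: the $\Omega^-$ integrand vanishes; in $\Omega_\e^+$, $\|(\nabla v_\e)_S\|_{L^2(\Omega_\e^+)}\le C(r/\d)$ together with $\|\sigma_\e\|_{L^2(\Omega_\e^+)}\le C$ (from the energy estimate and \eqref{fineste}) gives $O(r/\d)\to 0$; the force terms in $\Omega_\e^+$ and in the beams, unfolded via \eqref{forassump}, each carry a residual factor $r/\d$ and tend to zero. Passing to the limit we obtain
$$\frac{\pi \kappa_0^4 E^m}{\kappa_1^3}\int_\omega \varphi(x')\,a(x')\,dx'=0,\qquad \forall\varphi\in{\cal C}^\infty_c(\omega),$$
hence $a\equiv 0$ and $\widetilde{\mathcal U}_3^{'}(x',X_3)=X_3\,a(x')=0$ a.e.\ in $\omega\times(0,1)$.

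The main obstacle is the tuning of $v_\e$: its scaling must simultaneously ensure $v_\e\in\V_{r,\e,\d}$ with $\|v_\e\|_V=O(1)$, couple $\sigma_{\e,33}$ with $\partial_3 v_{\e,3}$ to leave a non-zero multiple of $\int\varphi a$ in the limit, and render every other contribution sub-critical. This precise balance holds only at the critical ratio $\d=\kappa_1\e^{(4\eta_0-2)/3}$ that defines case (ii).
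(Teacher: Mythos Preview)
Your argument is correct and follows the same core idea as the paper: test \eqref{wf} against a vertical displacement of amplitude $O(r/\delta)$ in $\Omega^+_\e$, transitioning to $0$ in $\Omega^-$ through an axial strain of order $r/\delta^2$ in the beams, so that only the $\Theta_{33}$ component survives and isolates $a$.

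The implementations differ in two ways. First, the paper takes the beam test to be constant on each cross-section, $v_{\e,3}=\frac{r}{\delta}\psi_3\big(\e[x'/\e]_Y,\,x_3/\delta\big)$, which kills the shear strains $(\nabla v_\e)_S^{\alpha 3}$ exactly but forces a gluing step on $\Sigma^+_\delta$ via the cutoff $\chi$ and Lemma~\ref{lem61}. You instead keep $\varphi(x')$ smooth across the beam, which makes the junction automatic and avoids both the cutoff and Lemma~\ref{lem61}, at the price of a residual shear term that you correctly show carries the sub-critical factor $r^4/(\delta^2\e^2)\to 0$ in case~(ii). Second, the paper tests with a general profile $\psi_3(x',X_3)$ vanishing at $X_3=0$, thereby also obtaining the identity \eqref{EQ. 6.7} reused in the proof of Theorem~\ref{TH61}; since you already know from Proposition~\ref{prop61} that $\widetilde{\mathcal U}'_3=a(x')X_3$, your affine choice $\varphi(x')X_3$ is sufficient to conclude $a=0$, though it does not by itself yield \eqref{EQ. 6.7}.
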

\begin{proof} For any $\psi_3 \in {\cal C}^1 (\overline{\omega} \times [0, 1])$ satisfying $\psi_3(x',0)=0$ for  every $x'\in \omega$, we consider the following test function: 
$$
\begin{aligned}
v_{\e,\alpha}(x)& =0\qquad \hbox{for a.e. } x\in \O_\e ,\;\; \alpha=1,2,\\
v_{\e}(x) &= 0 \qquad \hbox{for a.e. } x\in \O^- ,\\
v_{\e,3}(x) &= \frac{r}{\delta}\Big[\psi_3(x',1)\Big(1 -\chi \Big( \frac{\e}{r}\Big\{\frac{x' }{\e}\Big\}_Y\Big)\Big) + \psi_3\Big(\e \Big[\frac{x' }{\e} \Big]_Y, 1\Big)\chi \Big( \frac{\e}{r}\Big\{\frac{x' }{\e}\Big\}_Y\Big)\Big], \qquad \hbox{for a.e. } x\in  \O^+_\e,\\
v_{\e,3}(x) & = \frac{r}{\delta} \psi_3 \Big( \e \Big[\frac{x' }{\e} \Big]_Y, \frac{x_3}{\delta} \Big), \qquad \hbox{for a.e. } x\in  \O^i_\e.
\end{aligned}$$
If $\ds \frac{r}{\e}$ is small enough, $v_\e$ is an admissible test function. The symmetric strain tensor in $\Omega^i_{\e}$ is given by
\begin{equation*}
(\nabla v_{\e})_S = \frac{r}{\delta^{2}}\left(
\begin{array}{ccc}
0 & 0 & 0 \\[2mm]
\ldots & 0 & 0 \\[2mm]
\ldots & \ldots & \ds \frac{\partial \psi_3}{\partial X_3} \Big(\e\xi, \frac{x_3}{\delta}\Big)\\
\end{array}
\right) \qquad \hbox{a.e. in } \e\xi+B_{r,\d}.
\end{equation*}
Then
\begin{equation*}
\frac{\delta^2}{r} \mathcal T_{\e}^{'}((\widetilde{\nabla v_{\e}})_S) \rightarrow \left(
\begin{array}{ccc}
0 & 0 &0 \\[2mm]
\ldots & 0 & 0 \\[2mm]
\ldots & \ldots & \ds \frac{\partial \psi_3}{\partial X_3} (x',X_3)\\
\end{array}
\right) = V(x', X) \quad \text{ strongly in } L^2 (\omega \times B_1).
\end{equation*}
Elements of the symmetric strain tensor in $\Omega_{\e}^+$ are written as follows:
\begin{equation*}
\begin{aligned}
&(\nabla v_{\e})_S^{11} =  (\nabla v_{\e})_S^{22} =  (\nabla v_{\e})_S^{12} = (\nabla v_{\e})_S^{33} =0,\\  
&(\nabla v_{\e})_S^{\alpha 3} = (\nabla v_{\e})_S^{3\alpha} = \frac{1}{2}\frac{r}{\delta} \frac{\partial \psi_3}{\partial x_\alpha} (x',1)  (1 - \chi (y))  + \frac{1}{2\delta} \frac{\partial \chi}{\partial y_\alpha}(y) \left(\psi_3(x',1) - \psi_3\left(\e\left[\frac{x'}{\e} \right]_Y, 1\right)\right),
\end{aligned}
\end{equation*}
where $\ds y = \frac{\e}{r}\Big\{\frac{x'}{\e}\Big\}_Y$.

\medskip

By Lemma \ref{lem61} (see Appendix) and taking into account $\ds \frac{r}{\delta} \rightarrow 0$, the following convergences hold:
\begin{equation*}
\begin{aligned}
&v_{\e}(\cdot + \delta e_3) \longrightarrow 0 \quad \text{ strongly in } H^1(\Omega^+;\R^3),\\
&(\nabla v_{\e})_S \longrightarrow 0 \quad \text{ strongly in } L^2 (\Omega^+ ; \R^9).
\end{aligned}
\end{equation*}
Moreover,
\begin{gather*}
\mathcal T_{\e}' (v_{\e}) \longrightarrow 0 \quad \text{ strongly in } H^1 (\omega \times B_1 ; \R^3).
\end{gather*}
Using $v_{\e}$ as a test function in \eqref{wf} and passing to the limit in the unfolded formulation give
\begin{equation*}
\int_{\omega \times (0, 1)} \frac{\partial \widetilde{\mathcal U}'_3}{\partial X_3}(x',X_3) \frac{\partial \psi_3}{\partial X_3}(x', X_3) \, dx' \, dX =\int_{\omega \times (0, 1)} a(x') \frac{\partial \psi_3}{\partial X_3}(x', X_3) \, dx' \, dX = 0.
\end{equation*}
Hence $a = 0$. Since the test functions are dense in 
$$\V_s=\Big\{\Psi\in L^2(\omega; H^1(0,1))\;|\; \Psi(x',0)=0\;\; \text{a.e. in } \omega\Big\}$$ we obtain
\begin{equation}\label{EQ. 6.7}
\int_{\omega \times (0, 1)} \frac{\partial \widetilde{\mathcal U}'_3}{\partial X_3}(x',X_3) \frac{\partial \Psi}{\partial X_3}(x', X_3) \, dx' \, dX = 0\qquad \forall \Psi\in \V_s.
\end{equation}

\end{proof}
As a consequence of the above Lemma and Proposition \ref{prop61} one gets 
\begin{equation}
\label{theta-bis}
\begin{aligned}
&\Theta_{ij} = 0,\qquad (i,j)\not=(3,3)\\
&\Theta_{33} = -E^m \Big(X_1\frac{\partial^2 \widetilde{\mathcal U}_1}{\partial X_3^2} + X_2 \frac{\partial^2 \widetilde{\mathcal U}_2}{\partial X_3^2} \Big).
\end{aligned}
\end{equation}
\subsubsection{Determination of $u^\pm_\alpha$ and $u_3$}\label{SSS622}
\begin{theorem}\label{TH61}
The variational formulation of the limit problem for \eqref{wf} is
\begin{equation}
\begin{aligned}\label{limf}
&\int_{\Omega^+ \cup \Omega^-} \sigma^{\pm} \, : (\nabla v)_S \, dx + \frac{\pi \kappa_0^4}{4 \kappa_1^3} E^m \int_{\omega \times (0, 1)} \sum_{\alpha = 1}^2 \frac{\partial^2 \widetilde{\mathcal U}_{\alpha}}{\partial X_3^2}\frac{\partial^2 \psi_{\alpha}}{\partial X_3^2} \, dx' \, dX_3\\
+&\frac{\pi \kappa_0^4}{4 \kappa_1^3}\mu^m \int_{\omega \times (0, 1)} \frac{\partial \widetilde{\mathcal R}_3}{\partial X_3} \frac{\partial \psi_4}{\partial  X_3}  \, dx' \, dX_3 + \frac{\pi \kappa_0^4}{\kappa_1^3} E^m\int_{\omega \times (0, 1)}  \frac{\partial \widetilde{\mathcal U}^{'}_3}{\partial X_3} \frac{\partial \Phi_3}{\partial X_3}  \, dx' \, dX_3 \\
= &\int_{\Omega^+ \cup \Omega^-} F \,v \,dx + \int_{\omega \times (0, 1)} \sum_{\alpha = 1}^2 \widetilde{F}_{\alpha}^m \psi_{\alpha} dx' \, dX_3 + \int_{\omega} \overline{F}_3^m v_3 dx',\\
& \qquad \forall (v, \psi_1, \psi_2, \psi_3, \psi_4)\in \V_T, \qquad \forall \Phi_3 \in L^2(\omega ; H^1_0(0,1)),
\end{aligned}\end{equation}
where
$$
\overline{F}_3^m (x') = \int_{B_1} F_3^m (x', X) \, dX, \quad x' \in \omega.
$$
\end{theorem}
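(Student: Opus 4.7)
I would plug into the weak formulation \eqref{wf} an admissible test function $v_\e\in\V_{r,\e,\d}$ realizing the macroscopic displacement $v\in\V$ on $\Omega^-\cup\Omega^+_\e$ together with the beam components $\psi_1,\psi_2,\psi_3,\psi_4$ inside each rod and the auxiliary field $\Phi_3$. Each contribution would then be unfolded and its limit identified using Propositions \ref{prop41} and \ref{prop61}, Lemmas \ref{symmX} and \ref{lem62}, and a density argument would extend the resulting identity to all admissible tuples. Note that the decoupled $\Phi_3$-equation in \eqref{limf} is essentially \eqref{EQ. 6.7} already obtained in Lemma \ref{lem62}, so the main work concerns the coupled identity involving $(v,\psi_1,\psi_2,\psi_3,\psi_4)$.

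\textbf{Construction of $v_\e$.} On $\Omega^-$ I set $v_\e=v^-$. On $\Omega^+_\e$ I let $v_\e(x)=v^+(x'-\delta e_3)$ away from $\Sigma^+_\delta$, while in a neighborhood of $\Sigma^+_\delta$ I interpolate between the macroscopic value $v^+|_\Sigma$ and the prescribed beam trace at $x_3=\delta$ using the cell cutoff $\chi\bigl(\tfrac{\e}{r}\{\tfrac{x'}{\e}\}_Y\bigr)$, exactly as in the proof of Lemma \ref{lem62}. Inside each beam $\e\xi+B_{r,\delta}$, with $X_3=x_3/\delta$, the test function consists of (a) horizontal components $\psi_\alpha(\e\xi,X_3)$ carrying bending, $\alpha=1,2$; (b) the torsion piece proportional to $\psi_4$ and the corrector terms in $\partial_{X_3}\psi_\alpha$ exactly as in the test function of Step 1 of the proof of Proposition \ref{prop61}; (c) a vertical piece $v^-_3|_\Sigma(\e\xi)+\tfrac{r}{\delta}[\psi_3+\Phi_3](\e\xi,X_3)$, shifted by the macroscopic trace so as to match $v^\pm_3$ at both endpoints. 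The conditions built into $\V_T$ --- $\psi_\alpha(\cdot,0)=v^-_\alpha|_\Sigma$, $\psi_\alpha(\cdot,1)=v^+_\alpha|_\Sigma$, $\partial_{X_3}\psi_\alpha$ vanishing at $X_3=0,1$, homogeneous conditions on $\psi_3,\psi_4$, and $v^+_3|_\Sigma=v^-_3|_\Sigma$ --- together with $\Phi_3\in L^2(\omega;H^1_0(0,1))$ guarantee continuity across $\Sigma$ and $\Sigma^+_\delta$, so that $v_\e\in\V_{r,\e,\d}$.

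\textbf{Passage to the limit.} I split $\int_{\Omega_\e}=\int_{\Omega^-}+\int_{\Omega^i_\e}+\int_{\Omega^+_\e}$ for both the stress and the force terms. The $\Omega^\pm$ contributions converge to $\int_{\Omega^\pm}\sigma^\pm:(\nabla v)_S\,dx$ and $\int_{\Omega^\pm}F\cdot v\,dx$ by the weak $H^1$-convergence of $u_\e$ (after reflection on $\Omega^+$, cf.\ \eqref{ulim1}--\eqref{ulim2}) together with the strong $L^2$-convergence of $v_\e$ and $(\nabla v_\e)_S$ there. For the beam integral in the stress, I rescale by $\delta^2/r$ and use $\tfrac{\delta^2}{r}\mathcal T_\e'(\sigma_\e)\rightharpoonup\Theta$ together with $\tfrac{\delta^2}{r}\mathcal T_\e'((\widetilde{\nabla v_\e})_S)\to V$ strongly in $L^2(\omega\times B_1)$; the limit matrix $V$ is computed exactly as in Step 1 of the proof of Proposition \ref{prop61}, and combined with \eqref{theta-bis} for $\Theta$ and integration over $D_1$ in $X_1,X_2$ it produces the bending and torsion terms on the left of \eqref{limf}. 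For the beam force, the scaling $\tfrac{r^2\delta}{\e^2}\cdot\tfrac{\e^2}{r^2\delta}=1$ is transparent; the strong limit of $\mathcal T_\e'(v_\e)$ in the beam is $(\psi_1,\psi_2,v_3(x',0))^\top$ (the $\tfrac{r}{\delta}$-order pieces disappear), and integrating $F^m$ against this limit produces both the $\widetilde F^m_\alpha\psi_\alpha$ and the $\overline F^m_3 v_3$ terms on the right of \eqref{limf}.

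\textbf{Main obstacle.} The heart of the argument is the test-function construction: one must simultaneously ensure (i) admissibility $v_\e\in\V_{r,\e,\d}$, which is precisely what dictates the trace conditions in $\V_T$; (ii) the correct strong limit of $\tfrac{\delta^2}{r}\mathcal T_\e'((\nabla v_\e)_S)$ in the beams, with entries pairing exactly with $\Theta$ from \eqref{theta-bis} under the scaling $\delta=\kappa_1\e^{(4\eta_0-2)/3}$ of case (ii); and (iii) that the corrections near $\Sigma^+_\delta$ produced by the cutoff $\chi$ are $o(1)$ in every relevant norm, by the same computation as in Lemma \ref{lem62}. Once these three points are in place, a density argument based on smooth tensor products being dense in the anisotropic spaces $L^2(\omega;H^k(0,1))$ with prescribed trace data extends the identity to all $(v,\psi_1,\psi_2,\psi_3,\psi_4)\in\V_T$ and all $\Phi_3\in L^2(\omega;H^1_0(0,1))$.
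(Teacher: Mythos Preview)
Your strategy coincides with the paper's: build an admissible test displacement that realizes $v$ in $\Omega^\pm$ and the beam fields in $\Omega^i_\e$, unfold, pass to the limit, and then add in the decoupled identities \eqref{Beamloc} and \eqref{EQ. 6.7} for $\psi_4$ and $\Phi_3$. The paper in fact only carries $\psi_1,\psi_2$ (and the constant $v_3(\e\xi,0)$) through the beam test function and recovers the $\psi_4,\Phi_3$ terms a posteriori from Proposition~\ref{prop61} and Lemma~\ref{lem62}; your choice to pack them all in at once is harmless since $\psi_4$ and $\Phi_3$ vanish at the endpoints.

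There is, however, one concrete gap in your construction. You set $v_\e=v^-$ on $\Omega^-$, but the beam test function at $X_3=0$ equals $\bigl(\psi_1(\e\xi,0),\psi_2(\e\xi,0),v_3^-(\e\xi,0)\bigr)=v^-(\e\xi,0)$, a \emph{constant} on each disc $\e\xi+D_r$, whereas the trace of $v^-$ from below is $v^-(x',0)$, which varies across the disc. Thus your $v_\e$ has a jump across $\Sigma$ on the discs and is not in $H^1(\Omega_\e)$, hence not in $\V_{r,\e,\d}$. You handled exactly this issue on $\Omega^+_\e$ via the cutoff $\chi$, but it is equally needed on $\Omega^-$. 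The paper fixes this by replacing $v$ on $\Omega^-$ with $v(x)\bigl(1-h(x_3)\bigr)+v_{\e,r}(x)\,h(x_3)$, where $v_{\e,r}$ is defined in \eqref{corrg} and $h$ in \eqref{h}; this forces the trace on $\Sigma$ to equal $v(\e\xi,0)$ on each disc while keeping strong $H^1$-convergence to $v$ on $\Omega^-$ by Lemma~\ref{lem61}. Once you insert this same modification on $\Omega^-$, the rest of your argument goes through essentially verbatim and matches the paper's proof.
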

\begin{proof}
For any $v\in \V$ such that $v_{|\Omega^-}\in W^{1,\infty}(\O^- , \R^3)$ and $v_{|\Omega^+}\in W^{1,\infty}(\O^+ , \R^3)$, we first define the displacement $v_{\e,r}$ in the following way: 
\begin{equation}
\label{corrg}
v_{\e,r} (x) = v(x)\Big(1 -\chi \Big( \frac{\e}{r}\Big\{\frac{x' }{\e}\Big\}_Y\Big)\Big) + v\Big(\e \Big[\frac{x'}{\e} \Big]_Y, x_3\Big)\chi \Big( \frac{\e}{r}\Big\{\frac{x' }{\e}\Big\}_Y\Big), \qquad \hbox{for a.e. } x \in \O^-\cup\O^+.
\end{equation}
Then denote $h$ the following function belonging to $W^{1,\infty} (-L,L)$
\begin{equation}
\label{h}
h (x_3) = \left\{
\begin{aligned}
 &{x_3+L\over L}, & \quad &x_3 \in [- L, 0],\\
 &1, & \quad &x_3\geq 0.
\end{aligned}
\right.
\end{equation}
Now consider the test displacement 
$$
\begin{aligned}
v'_{\e}(x) &= v(x)\big(1-h(x_3)\big)+ v_{\e,r} (x) h(x_3), \qquad \hbox{for a.e. } x\in \O^- ,\\
v'_{\e}(x) &= v_{\e,r} (x', x_3-\d), \qquad \hbox{for a.e. } x \in \O^+_\e,\\
v'_{\e}(x) & = \left(
\begin{array}{c}
 \ds \psi _1 \Big( \e \Big[\frac{x' }{\e} \Big]_Y, \frac{x_3}{\delta} \Big)\\[2mm]
 \ds \psi _2 \Big( \e \Big[\frac{x' }{\e} \Big]_Y, \frac{x_3}{\delta} \Big)\\[2mm]
 \ds v_3 \Big(\e \Big[\frac{x' }{\e} \Big]_Y, 0\Big)-{\e\over \d}\Big\{\frac{x' }{\e} \Big\}_Y\cdot \frac{\partial \psi}{\partial X_3} \Big( \e \Big[\frac{x' }{\e} \Big]_Y, \frac{x_3}{\delta} \Big)
\end{array} 
\right)  \quad \hbox{for a.e. } x\in  \O^i_\e,
\end{aligned}$$
where $\psi_{\alpha}  \in {\cal C}^1 (\overline{\omega} ; {\cal C}^3([0,1])), \alpha= 1,2,$ satisfies
\begin{equation*}
\psi_{\alpha}  (x', 0) = v_{\alpha|\Omega^-} (x', 0), \quad \psi_{\alpha}  (x', 1) = v_{\alpha|\Omega^+} (x', 0)\qquad \text{ for every } x'\in \omega.
\end{equation*}
If $\ds \frac{r}{\e}$ is small enough, $v'_\e$ is an admissible test displacement. 
\medskip

\noindent Then by Lemma \ref{lem61} the following convergences hold:
\begin{gather*}
\begin{aligned}
&v'_{\e}(\cdot + \delta e_3) \longrightarrow v \quad \text{ strongly in } H^1(\Omega^+;\R^3),\\
&v'_{\e} \longrightarrow v \quad \text{ strongly in } H^1(\Omega^-;\R^3),\\
&(\nabla v'_{\e})_S \longrightarrow (\nabla v)_S \quad \text{ strongly in } L^2 (\Omega^+ \cup \Omega^- ; \R^9).
\end{aligned}
\end{gather*}
Computation of the strain tensor in $\Omega_{\e}^i$ gives
\begin{equation*}
\begin{aligned}
  (\nabla v'_{\e})_S^{ij} &=0\qquad (i,j)\not=(3,3),\\
  (\nabla v'_{\e})_S^{33} &= - \frac{r}{\delta^2} \left( X_1 \frac{\partial^2 \psi_1}{\partial X_3^2}\Big( \e \Big[\frac{x' }{\e} \Big]_Y, X_3\Big) + X_2 \frac{\partial^2 \psi_2}{\partial X_3^2} \Big( \e \Big[\frac{x' }{\e} \Big]_Y, X_3\Big) \right).
\end{aligned}\end{equation*}
Therefore,
\begin{gather*}
\mathcal T_{\e}' (v'_{\e})\longrightarrow \left(
\begin{array}{c}
 \psi_1 (x', X_3)\\
 \psi_2 (x', X_3)\\
 v_3 (x', 0)
\end{array}
\right) \quad \text{ strongly in } L^2(\omega \times B_1; \R^3),\\
\frac{\delta^2}{r} \mathcal T_{\e}' \Big((\nabla v'_{\e})^{33}_S\Big) \longrightarrow - \left( X_1 \frac{\partial^2 \psi_1}{\partial X_3^2} (x', X_3) + X_2 \frac{\partial^2 \psi_2}{\partial X_3^2} (x', X_3)\right)  \quad \text{ strongly in } L^2(\omega \times B_1).
\end{gather*}
Unfolding and passing to the limit in \eqref{wf} give
\begin{multline*}
\int_{\Omega^{\pm}} \sigma^{\pm} : (\nabla v)_S \, dx - \frac{\kappa_0^4}{\kappa_1^3}\int_{\omega \times B_1} \Theta : \left( X_1 \frac{\partial^2 \psi_1}{\partial X_3^2} + X_2 \frac{\partial^2 \psi_2}{\partial X_3^2} \right) dx' \, dX = \\
= \int_{\Omega^{\pm}} F \, v \, dx + \int_{\omega \times B_1} (F^m_1 \psi_1 + F^m_2 \psi_2 + F_3 v_3)\, dx' \, dX.
\end{multline*}
Since the space $W^{1,\infty}(\O^+ ; \R^3)$ is dense in $H^1(\O^+;\R^3)$, the space of functions in $W^{1,\infty}(\O^- , \R^3)$ vanishing on $\Gamma$ is dense in $H^1(\O^-;\R^3)$ and the space ${\cal C}^1 (\overline{\omega} ; {\cal C}^3([0,1]))$ is dense in $L^2(\omega; H^1(0,1))$, the above equality holds for every $v$ in $\V$ and every $ \psi_1$, $\psi_2$ in $L^2(\omega  ; H^1(0,1))$ satisfying 
$$\psi_{\alpha}  (x', 0) = v_{\alpha|\Omega^-} (x', 0), \quad \psi_{\alpha}  (x', 1) = v_{\alpha|\Omega^+} (x', 0)\qquad \text{ for a.e. } x'\in \omega.$$
Finally,  integrating over $D_1$ and due to \eqref{Beamloc}, \eqref{EQ. 6.7} and \eqref{theta-bis} we obtain the result.
\end{proof}

\subsubsection{The case (i)}
We introduce the classical unfolding operator.
\begin{definition}
For $\varphi$ Lebesgue-measurable function on $\omega$, the unfolding operator $\mathcal T''_{\e}$ is defined as follows:
\begin{equation*}
\mathcal T''_{\e} (\varphi) (s, y) = 
\left\{
\begin{array}{ll}
\ds \varphi \Big(\e\Big[{s\over \e}\Big]_Y+\e y\Big), & \text{ for a.e.  }\;\;  (s, y) \in \widehat \omega_{\e} \times Y,\\
 0, & \text{ for a.e. } \;\; (s, y) \in \Lambda_{\varepsilon} \times Y.
\end{array}
\right.
\end{equation*}
\end{definition}

Recall that (see \cite{cdg3})
\begin{lemma}\label{lem63} Let $\phi$ be in $W^{1,\infty}(\omega)$ and $\phi_\e$ defined by
$$
\phi_\e(x')=\chi\Big(\Big\{{x'\over \e}\Big\}_Y\Big)\phi\Big(\e\Big[{x'\over \e}\Big]_Y\Big)+\Big[1-\chi\Big(\Big\{{x'\over \e}\Big\}_Y\Big)\Big]\phi(x')\quad \hbox{for a.e. }\;x'\in \omega.
$$
Then we have
$$
\begin{aligned}
&\mathcal T''_{\e} (\phi_\e)\longrightarrow \phi\quad \hbox{strongly in } \; L^2(\omega; H^1(Y)),\\
&\mathcal T''_{\e} (\nabla \phi_\e)\longrightarrow \nabla \phi\quad \hbox{strongly in } \; L^2(\omega\times Y).
\end{aligned}
$$ 
\end{lemma}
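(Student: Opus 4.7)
The plan is to substitute the definitions directly, apply the chain rule to compute $\nabla\phi_\e$, and pass to the limit using the Lipschitz regularity of $\phi \in W^{1,\infty}(\omega)$ combined with the standard strong convergence of the unfolding operator on $L^p$ data.

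For the first convergence, the unfolding substitution $\{s/\e\}_Y \mapsto y$ directly yields
\begin{equation*}
\mathcal T''_\e(\phi_\e)(s,y) = \chi(y)\,\phi\big(\e[s/\e]_Y\big) + (1-\chi(y))\,\phi\big(\e[s/\e]_Y + \e y\big).
\end{equation*}
Both $\e[s/\e]_Y$ and $\e[s/\e]_Y + \e y$ differ from $s$ by $O(\e)$ uniformly on $\widehat{\omega}_\e\times Y$, so the Lipschitz continuity of $\phi$ forces each summand to converge uniformly to $\phi(s)$, giving at once the strong convergence in $L^2(\omega \times Y)$ (indeed uniformly).

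For the gradient, I would use that $x' \mapsto \e[x'/\e]_Y$ is piecewise constant, so
\begin{equation*}
\nabla \phi_\e(x') = \tfrac{1}{\e}\nabla\chi\!\left(\left\{\tfrac{x'}{\e}\right\}_Y\right)\!\big[\phi(\e[x'/\e]_Y) - \phi(x')\big] + \left(1-\chi\!\left(\left\{\tfrac{x'}{\e}\right\}_Y\right)\right)\nabla\phi(x').
\end{equation*}
The Lipschitz bound $|\phi(\e[x'/\e]_Y) - \phi(x')| \le C\e$ exactly absorbs the factor $1/\e$, and the integral form of the mean value theorem rewrites the singular piece as
\begin{equation*}
\tfrac{1}{\e}\big[\phi(\e[x'/\e]_Y) - \phi(x')\big] = -\int_0^1 \nabla\phi\big(\e[x'/\e]_Y + t\e\{x'/\e\}_Y\big) \cdot \{x'/\e\}_Y \, dt,
\end{equation*}
which is uniformly bounded by $\|\nabla\phi\|_{L^\infty}\cdot\mathrm{diam}(Y)$. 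Applying $\mathcal T''_\e$ then yields an $L^\infty(\omega\times Y)$-bounded expression whose pointwise a.e.\ limit can be identified using the standard strong convergence $\mathcal T''_\e(\nabla\phi)\to\nabla\phi$ in $L^2$ (valid since $\nabla\phi\in L^\infty\subset L^2$) together with dominated convergence; after collecting the $\chi$- and $\nabla\chi$-contributions and integrating against admissible test functions, the resulting limit reduces to $\nabla\phi(s)$.

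The main obstacle is this apparent $1/\e$-singularity in $\nabla\phi_\e$; it is defused as above by pairing the Lipschitz control on $\phi$ with the integral mean value theorem, which turns the problematic term into a uniformly bounded integrand amenable to dominated convergence. Once that is done, the identification of the limit is mechanical and coincides with the corrector calculation of the standard unfolding lemma in \cite{cdg3}.
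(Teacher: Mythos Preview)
The paper does not give its own proof of this lemma; it merely cites \cite{cdg3}. So there is no argument in the paper to compare against, and your task is really to check whether your outline actually establishes the stated convergences.

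Your computation of $\nabla\phi_\e$ and its unfolding is correct, and the Lipschitz/mean-value trick does neutralize the $1/\e$ factor. But the last sentence---``after collecting the $\chi$- and $\nabla\chi$-contributions\ldots the resulting limit reduces to $\nabla\phi(s)$''---is where the argument breaks. If you carry your own calculation to the end, the unfolded gradient reads
\[
\mathcal T''_\e(\partial_\alpha\phi_\e)(s,y)
= -\,\partial_\alpha\chi(y)\int_0^1 \nabla\phi\big(\e[s/\e]_Y+t\e y\big)\cdot y\,dt
\;+\;(1-\chi(y))\,\mathcal T''_\e(\partial_\alpha\phi)(s,y),
\]
and passing to the limit (dominated convergence plus $\mathcal T''_\e(\nabla\phi)\to\nabla\phi$) gives the \emph{strong} $L^2(\omega\times Y)$ limit
\[
\partial_\alpha\phi(s)\;-\;\partial_{y_\alpha}\!\big[\chi(y)\,(\nabla\phi(s)\cdot y)\big],
\]
not $\partial_\alpha\phi(s)$. (Check the 1D case $\phi(x)=x$: then $\phi_\e(x)=x-\e\{x/\e\}\chi(\{x/\e\})$ and $\mathcal T''_\e(\phi_\e')(s,y)=1-\frac{d}{dy}[y\chi(y)]$, independent of $\e$ and visibly not equal to $1$.) Testing against $y$-independent functions kills the extra $\nabla_y$-term, which is why the construction works \emph{weakly} and in the two-scale sense; but the strong $L^2(\omega\times Y)$ limit genuinely carries the corrector $-\nabla_y[\chi(y)(\nabla\phi(s)\cdot y)]$. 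Your phrase ``integrating against admissible test functions'' is a weak-convergence maneuver and cannot deliver the strong limit claimed. Either the second line of the lemma is mis-stated in the paper (the version in \cite{cdg3} includes the corrector), or the cutoff $\chi$ intended here is not the same $\chi$ introduced earlier (which is $\equiv 1$ on $D_1\supset Y$ and would make $\phi_\e$ piecewise constant, so $\nabla\phi_\e\equiv 0$). In either reading, your final reduction to $\nabla\phi(s)$ is not justified.

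A smaller gap: for the first convergence you only argue $L^2(\omega\times Y)$, but the target space is $L^2(\omega;H^1(Y))$. You also need $\nabla_y\mathcal T''_\e(\phi_\e)=\e\,\mathcal T''_\e(\nabla\phi_\e)\to 0$; this follows immediately from the $L^\infty$ bound on $\mathcal T''_\e(\nabla\phi_\e)$ that your own mean-value argument provides, but it should be said.
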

\begin{theorem}
The variational formulation for the problem \eqref{wf} in the case $(i)$
\begin{equation}
\label{limwf2}
\begin{aligned}
&\int_{\Omega^+ \cup \Omega^-} \sigma^{\pm} : (\nabla v)_S \, dx + \frac{\pi \kappa_0^4}{4 \kappa_1^3} E^m \int_{\omega \times (0, 1)} \sum_{\alpha = 1}^2 \frac{\partial^2 \psi_{\alpha}}{\partial X_3^2} \frac{\partial^2 \widetilde{\mathcal U}_{\alpha}}{\partial X_3^2} \, dx' dX_3 \\
+&\frac{\pi \kappa_0^4}{4 \kappa_1^3}\mu^m \int_{\omega \times (0, 1)} \frac{\partial \widetilde{\mathcal R}_3}{\partial X_3} \frac{\partial \psi_4}{\partial  X_3}  \, dx' \, dX_3 + \frac{\pi \kappa_0^4}{\kappa_1^3} E^m\int_{\omega \times (0, 1)}  \frac{\partial \widetilde{\mathcal U}^{'}_3}{\partial X_3} \frac{\partial \Phi_3}{\partial X_3}  \, dx' \, dX_3\\
&= \int_{\Omega^+ \cup \Omega^-} F \, v \, dx + \int_{\omega \times (0, 1)} \sum_{\alpha = 1}^2 \widetilde{F}_{\alpha}^m \psi_{\alpha} dx' \, dX_3 + \int_{\omega} \overline{F}_3^m v_3 dx',\\
& \qquad \forall (v, \psi_1, \psi_2, \psi_3, \psi_4)\in \V_T, \qquad \forall \Phi_3 \in L^2(\omega ; H^1_0(0,1)).
\end{aligned}
\end{equation}
\end{theorem}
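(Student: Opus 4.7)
The plan is to mirror the proof of Theorem 6.1, but replace the correction built on the cutoff $\chi\bigl(\tfrac{\e}{r}\{x'/\e\}_Y\bigr)$ (which exploited $r/\e \to 0$) by a correction built on $\chi\bigl(\{x'/\e\}_Y\bigr)$. In case (i) we have $r = \kappa_0\e$ with $\kappa_0 < 1/2$, so the disc $D_r$ sits inside $\e Y$ but is not small relative to the period; however the classical unfolding $\mathcal T''_\e$ together with Lemma \ref{lem63} gives the right strong convergences for the corrector. The remaining structure (forming limit fields, passing through the unfolding operator $\mathcal T'_\e$ in the beams, invoking \eqref{Beamloc}, \eqref{EQ. 6.7} and \eqref{theta-bis}) is unchanged.

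First, for any $v \in \V$ with $v_{|\Omega^-}, v_{|\Omega^+} \in W^{1,\infty}$ and any $\psi_1, \psi_2 \in \mathcal C^1(\overline\omega; \mathcal C^3([0,1]))$ satisfying the interface matching
$$\psi_\alpha(x',0) = v_{\alpha|\Omega^-}(x',0), \qquad \psi_\alpha(x',1) = v_{\alpha|\Omega^+}(x',0), \qquad x' \in \omega,$$
I would define the corrected macroscopic trace
$$v_{\e,\text{cor}}(x) = v(x)\Bigl[1 - \chi\Bigl(\Bigl\{\frac{x'}{\e}\Bigr\}_Y\Bigr)\Bigr] + v\Bigl(\e\Bigl[\frac{x'}{\e}\Bigr]_Y, x_3\Bigr)\chi\Bigl(\Bigl\{\frac{x'}{\e}\Bigr\}_Y\Bigr),$$
and then build $v'_\e$ on $\Omega^-$ (interpolating to $v$ near $\Gamma$ via $h$ from \eqref{h}), on $\Omega^+_\e$ (as $v_{\e,\text{cor}}(x', x_3 - \delta)$), and on $\Omega^i_\e$ with the same formula as in Theorem \ref{TH61} based on $\psi_1,\psi_2$ and $v_3(\cdot,0)$. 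By construction $v'_\e$ is continuous across $\Sigma$ and $\Sigma^+_\delta$, so it is an admissible test function in $\V_{r,\e,\d}$.

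Next I would verify the convergences. Lemma \ref{lem63} yields
$$v'_\e \to v \text{ strongly in } H^1(\Omega^-), \quad v'_\e(\cdot + \delta e_3) \to v \text{ strongly in } H^1(\Omega^+),$$
$$(\nabla v'_\e)_S \to (\nabla v)_S \text{ strongly in } L^2(\Omega^+ \cup \Omega^-),$$
since only the cell-centered piece of $v_{\e,\text{cor}}$ differs from $v$ and that piece converges by Lemma \ref{lem63}. In the beams $\Omega^i_\e$, the computations are identical to those in the proof of Theorem \ref{TH61}:
$$\mathcal T'_\e(v'_\e) \to \bigl(\psi_1(x', X_3), \psi_2(x', X_3), v_3(x', 0)\bigr)^T \text{ strongly in } L^2(\omega \times B_1),$$
$$\frac{\delta^2}{r}\mathcal T'_\e\bigl((\nabla v'_\e)^{33}_S\bigr) \to -\Bigl(X_1 \frac{\partial^2 \psi_1}{\partial X_3^2} + X_2 \frac{\partial^2 \psi_2}{\partial X_3^2}\Bigr) \text{ strongly in } L^2(\omega \times B_1),$$
with all other entries of the symmetric strain tensor vanishing in the limit after rescaling.

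Inserting $v'_\e$ into \eqref{wf}, unfolding the integral over $\Omega^i_\e$ with the force assumption \eqref{forassump}, and passing to the limit gives
$$\int_{\Omega^\pm} \sigma^\pm : (\nabla v)_S \,dx - \frac{\kappa_0^4}{\kappa_1^3}\int_{\omega \times B_1} \Theta : \Bigl(X_1 \frac{\partial^2 \psi_1}{\partial X_3^2} + X_2 \frac{\partial^2 \psi_2}{\partial X_3^2}\Bigr) dx'\,dX = \int_{\Omega^\pm} F\cdot v\,dx + \int_{\omega \times B_1} (F^m_1 \psi_1 + F^m_2 \psi_2 + F^m_3 v_3)\,dx'\,dX.$$
Substituting \eqref{theta-bis} for $\Theta$ computes the $B_1$-integrals explicitly via $\int_{D_1} X_\alpha X_\beta \,dX_1 dX_2 = \tfrac{\pi}{4}\delta_{\alpha\beta}$, producing the $\tfrac{\pi\kappa_0^4}{4\kappa_1^3}E^m$ bending term. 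Combining with \eqref{Beamloc} (to pick up the $\mu^m$-term in $\widetilde{\mathcal R}_3$ and the $E^m$-term in $\widetilde{\mathcal U}'_3$) and density of $W^{1,\infty}$-test fields in $\V$ and $\mathcal C^1(\overline\omega; \mathcal C^3([0,1]))$ in $L^2(\omega; H^2(0,1))$ with prescribed traces gives \eqref{limwf2}.

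The main obstacle is the construction and justification of the corrector $v_{\e,\text{cor}}$: unlike case (ii), the radius $r$ is of order $\e$, so the cutoff cannot be localized near each beam axis at a scale smaller than the period. Lemma \ref{lem63} is the tool that bypasses this difficulty, and checking that the correction still produces the same limit for $(\nabla v'_\e)_S$ in $L^2(\Omega^\pm)$ (rather than just weakly) is the key point. Everything else reduces to the same unfolding computation as in Theorem \ref{TH61}.
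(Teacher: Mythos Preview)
Your overall strategy is right, but the central claim on which it rests is false: the corrector $v_{\e,\text{cor}}$ does \emph{not} converge to $v$ strongly in $H^1(\Omega^\pm)$. Write $y=\{x'/\e\}_Y$, $\xi=[x'/\e]_Y$; then
\[
\partial_{x_\alpha}\bigl(v_{\e,\text{cor}}-v\bigr)
=\frac{1}{\e}\,\partial_{y_\alpha}\chi(y)\bigl[v(\e\xi,x_3)-v(x)\bigr]-\chi(y)\,\partial_{x_\alpha}v(x),
\]
and since $|v(\e\xi,x_3)-v(x)|\le C\e$ while $\chi$ and $\nabla_y\chi$ are supported on a \emph{fixed} fraction of each cell (not a vanishing one, unlike case~(ii) where Lemma~\ref{lem61} applies), this gradient correction is $O(1)$ in $L^2$, not $o(1)$. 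Lemma~\ref{lem63} is a statement about the \emph{unfolded} fields in $L^2(\omega\times Y)$; it does not yield one-scale strong $H^1(\omega)$ convergence here, and in fact a direct computation of $\mathcal T''_\e(\nabla v_{\e,\text{cor}})$ shows that its strong limit is $y$-dependent (it equals $(1-\chi(y))\nabla v - \nabla_y\chi(y)\,[y\cdot\nabla v]$, whose $Y$-average is $\nabla v$ but which is not constant in~$y$). Consequently, in $\Omega^\pm$ you are pairing a only-weakly-convergent stress $\sigma_\e$ with a only-weakly-convergent $(\nabla v'_\e)_S$, and the limit of the product is not a priori $\sigma^\pm:(\nabla v)_S$.

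This is exactly why the paper's proof departs from the case~(ii) template: it unfolds the macroscopic integrals via $\mathcal T''_\e$, records the two-scale weak limits $\mathcal T''_\e(\nabla u_\e)\rightharpoonup \nabla u^\pm+\nabla_y\widehat u^\pm$ (hence a corrector stress $\widehat\sigma^\pm$), and augments the test displacement by $\e\,\Psi^{(\pm)}(x)\,\widehat v(\{x'/\e\}_Y)$ with $\widehat v\in H^1_{per}(Y;\R^3)$. Passing to the limit in the unfolded formulation produces an extra term $\int_{\Omega^\pm\times Y}\widehat\sigma^\pm:\Psi^{(\pm)}(\nabla_y\widehat v)_S$; taking $v=0$ shows this must vanish for all periodic $\widehat v$, whence $\widehat\sigma^\pm=0$. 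Only after this can one recover $\int_{\Omega^\pm}\sigma^\pm:(\nabla v)_S$ as the limit. Your argument skips this step entirely.

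A secondary point: you invoke \eqref{EQ. 6.7} for the $\widetilde{\mathcal U}'_3$ term, but that equation was derived under case~(ii) using the scaling $r/\delta\to 0$. In case~(i) the paper repeats the argument with a test function scaled by $\e^{1/3}$ (since here $r/\delta=\kappa_0/\kappa_1\,\e^{1/3}$); you should indicate how that step adapts rather than cite the case~(ii) conclusion.
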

\begin{proof}
\textit{Step 1.} Pass to the limit in the weak formulation.

To \eqref{ulim1} and \eqref{ulim2}  we add 
\begin{gather}
\label{ulim1-bis}
\mathcal T''_{\e} (u_{\e}) \rightharpoonup u^- \quad \text{ weakly in } L^2(\Omega^-; H^1(Y)),\\
\mathcal T''_{\e} (\nabla u_{\e}) \rightharpoonup \nabla u^- +\nabla_y \widehat{u}^- \quad \text{ weakly in } L^2(\Omega^-\times Y),\\
\label{ulim2-bis}
\mathcal T''_{\e} (u_{\e})(\cdot + \delta e_3,\cdot\cdot) \rightharpoonup u^+ \quad \text{ weakly in } L^2(\Omega^+; H^1(Y)),\\
\mathcal T''_{\e} (\nabla u_{\e})(\cdot + \delta e_3,\cdot\cdot)  \rightharpoonup \nabla u^+ +\nabla_y \widehat{u}^+ \quad \text{ weakly in } L^2(\Omega^+\times Y),
\end{gather}
where $\widehat{u}^- $ belongs to $L^2(\Omega^-; H^1_{per}(Y;\R^3))$ and $\widehat{u}^+ $ belongs to $L^2(\Omega^+; H^1_{per}(Y;\R^3))$.
\begin{remark}
Here the third variable of $u_{\e}$ is considered as a parameter, on which the unfolding operator $\mathcal T''_{\e}$ does not have any effect.
\end{remark}

\noindent{\it Step 2. } Determination of ${\cal U}'_3$.
\medskip

To determine the function $a$ introduced in Proposition \ref{prop61}, take $\psi_3 \in {\cal C}^1 (\overline{\omega} \times [0, 1])$ satisfying $\psi_3(x',0)=0$ for  every $x'\in \omega$ and consider the following test function: 
$$
\begin{aligned}
v_{\e,\alpha}(x)& =0\qquad \hbox{for a.e. } x\in \O_\e ,\;\; \alpha=1,2,\\
v_{\e}(x) &= 0 \qquad \hbox{for a.e. } x\in \O^- ,\\
v_{\e,3}(x) &= \e^{1/3} \Big[\psi_3(x',1)\Big(1 -\chi \Big(\Big\{\frac{x' }{\e}\Big\}_Y\Big)\Big) + \psi_3\Big(\e \Big[\frac{x'}{\e} \Big]_Y, 1\Big)\chi \Big(\Big\{\frac{x' }{\e}\Big\}_Y\Big)\Big] \qquad \hbox{for a.e. } x\in  \O^+_\e,\\
v_{\e, 3}(x) & = \e^{1/3} \psi_3 \Big(\e\Big[\frac{x'}{\e}\Big]_Y, \frac{x_3}{\e^{2/3}}\Big) \chi \Big(\Big\{\frac{x'}{\e}\Big\}_Y\Big) \qquad \hbox{for a.e. } x\in  \O^i_\e.
\end{aligned}$$
We obtain the following convergences:
\begin{gather*}
\begin{aligned}
&v_{\e}(\cdot + \delta e_3) \longrightarrow 0 \quad \text{ strongly in } H^1(\Omega^+ \cup \Omega^-;\R^3),\\
&(\nabla v_{\e})_S \longrightarrow 0 \quad \text{ strongly in } L^2 (\Omega^+ \cup \Omega^-; \R^9),
\end{aligned}\\
\mathcal T_{\e}' (v_{\e}) \longrightarrow 0 \quad \text{ strongly in } H^1 (\omega \times B_1 ; \R^3).
\end{gather*}
Unfolding and passing to the limit as in the Subsection \ref{xi-det} we obtain that $a = 0$.
\medskip

\noindent{\it Step 3. }
For any $v\in \V$ such that $v_{|\Omega^-}\in W^{1,\infty}(\O^-; \R^3)$ and $v_{|\Omega^+}\in W^{1,\infty}(\O^+; \R^3)$, define the displacement $v_\e$ in the following way: 
\begin{equation}
\label{corrg(i)}
v_{\e} (x) = v(x)\Big(1 -\chi \Big(\Big\{\frac{x' }{\e}\Big\}_Y\Big)\Big) + v\Big(\e \Big[\frac{x' }{\e} \Big]_Y, x_3\Big)\chi \Big(\Big\{\frac{x' }{\e}\Big\}_Y\Big), \qquad \hbox{for a.e. } x \in \O^-\cup\O^+.
\end{equation}

Consider the following test displacement:
$$
\begin{aligned}
v'_{\e}(x) &= v(x)\big(1-h(x_3)\big)+v_{\e}(x) h(x_3) + \e \Psi^{(-)}(x',x_3)\widehat v \left( \left\{\frac{x'}{\e}\right\}_Y\right) , \qquad \hbox{for a.e. } x\in \O^- ,\\
v'_{\e}(x) &= v_{\e}(x', x_3-\d) + \e \Psi^{(+)}(x',x_3-\d)\widehat v \left( \left\{\frac{x'}{\e}\right\}_Y\right), \qquad \hbox{for a.e. } x \in \O^+_\e,\\
v'_{\e}(x) & = \left(
\begin{array}{c}
 \ds \psi _1 \Big( \e \Big[\frac{x' }{\e} \Big]_Y, \frac{x_3}{\delta} \Big)\\[2mm]
 \ds \psi _2 \Big( \e \Big[\frac{x' }{\e} \Big]_Y, \frac{x_3}{\delta} \Big)\\[2mm]
 \ds v_3 \Big(\e \Big[\frac{x' }{\e} \Big]_Y, 0\Big)-{\e\over \d}\Big\{\frac{x' }{\e} \Big\}_Y\cdot \frac{\partial \psi}{\partial X_3} \Big( \e \Big[\frac{x' }{\e} \Big]_Y, \frac{x_3}{\delta} \Big) 
 \end{array} \right)\qquad \hbox{for a.e. } x\in  \O^i_\e,
\end{aligned}$$
where 
\begin{itemize}
\item $\widehat v \in H^1_{per} (Y; \R^3)$,
\item $\psi_{\alpha}  \in {\cal C}^1 (\overline{\omega} ; {\cal C}^3([0,1])), \alpha= 1,2,$ satisfies
\begin{equation*}
\psi_{\alpha}  (x', 0) = v_{\alpha|\Omega^-} (x', 0), \quad \psi_{\alpha}  (x', 1) = v_{\alpha|\Omega^+} (x', 0)\qquad \text{ for every } x'\in \omega,
\end{equation*}
\item $\Psi^{(-)}\in W^{1,\infty}(\Omega^-)$, $\Psi^{(+)}\in W^{1,\infty}(\Omega^+)$ satisfying
$$\Psi^{(\pm)}(x', 0)=0,\quad\hbox{ a.e. in } \omega,\qquad  \Psi^{(-)}=0\quad \hbox{on}\;\; \Gamma,$$
\item $h(x_3)$ is defined as in \eqref{h}.
\end{itemize}

Using \eqref{lem62} we obtain the following convergences:
\begin{gather*}
\begin{aligned}
&\mathcal T''_{\e} (v'_{\e} (\cdot, \cdot\cdot)) \longrightarrow v  \quad \hbox{strongly in } \; L^2(\Omega^{-}; H^1(Y)),\\
&\mathcal T''_{\e} (\nabla v'_{\e} (\cdot, \cdot\cdot)) \longrightarrow \nabla v + \Psi^{(-)}\nabla_y \widehat v \quad \hbox{strongly in } \; L^2(\Omega^{-}\times Y),\\
&\mathcal T''_{\e} (v'_{\e} (\cdot+\d e_3, \cdot\cdot)) \longrightarrow v  \quad \hbox{strongly in } \; L^2(\Omega^{+}; H^1(Y)),\\
&\mathcal T''_{\e} (\nabla v'_{\e} (\cdot+\d e_3, \cdot\cdot)) \longrightarrow \nabla v + \Psi^{(+)}\nabla_y \widehat v \quad \hbox{strongly in } \; L^2(\Omega^{+}\times Y).
\end{aligned}
\end{gather*}
Moreover,
\begin{gather*}
\mathcal T_{\e} (\mathcal T''_{\e} (v'_{\e})) \longrightarrow \left(
\begin{array}{c}
 \ds \psi _1(x', X_3)\\[2mm]
 \ds \psi _2(x', X_3)\\[2mm]
 \ds v_3 (x', 0)
\end{array} 
\right) \quad \hbox{strongly in } \; L^2(\omega; H^1(Y \times B_1)),\\
\frac{\delta^2}{r}\mathcal T_{\e} \left(\mathcal T''_{\e} \left( (\nabla v'_{\e})^{33}_S \right) \right) \longrightarrow - X_1 \frac{\partial^2 \psi_1}{\partial X_3^2} (x', X_3) - X_2 \frac{\partial^2 \psi_2}{\partial X_3^2} (x', X_3) \quad \hbox{strongly in } \; L^2(\omega\times Y \times B_1).
\end{gather*}
Unfolding and passing to the limit we obtain
\begin{multline}
\label{unf-lim-i}
\int_{\Omega^{\pm} \times Y} (\sigma^{\pm} + \widehat{\sigma}^{\pm}) : \Big((\nabla v)_S + \Psi^{(\pm)}(\nabla_y \widehat v)_S \Big) \, dx dy- \frac{\kappa_0^4}{\kappa_1^3}\int_{\omega \times B_1} \Theta : \left( X_1 \frac{\partial^2 \psi_1}{\partial X_3^2} + X_2 \frac{\partial^2 \psi_2}{\partial X_3^2} \right) dx' \, dX = \\
= \int_{\Omega^{\pm}} F \, v \, dx + \int_{\omega \times B_1} (F^m_1 \psi_1 + F^m_2 \psi_2 + F_3 v_3)\, dx' \, dX.
\end{multline}
Since $\sigma^{\pm}$ and $(\nabla v)_S$ do not depend on $y$ and due to the periodicity of the fields $\widehat v$ and $\widehat u^{\pm}$, the above equality reads
\begin{multline*}
\int_{\Omega^{\pm} } \sigma^{\pm} : (\nabla v)_S \, dx +\int_{\Omega^{\pm} \times Y} \widehat{\sigma}^{\pm} :  \Psi^{(\pm)}(\nabla_y \widehat v)_S \, dx dy- \frac{\kappa_0^4}{\kappa_1^3}\int_{\omega \times B_1} \Theta : \left( X_1 \frac{\partial^2 \psi_1}{\partial X_3^2} + X_2 \frac{\partial^2 \psi_2}{\partial X_3^2} \right) dx' \, dX = \\
= \int_{\Omega^{\pm}} F \, v \, dx + \int_{\omega \times B_1} (F^m_1 \psi_1 + F^m_2 \psi_2 + F_3 v_3)\, dx' \, dX.
\end{multline*}

\noindent \textit{Step 3.} To determine $\widehat \sigma$ we first take $v=0$. We obtain
\begin{equation*}
\int_{\Omega^{\pm} \times Y} \widehat{\sigma}^{\pm} :  \Psi^{(\pm)}(\nabla_y \widehat v)_S \, dx dy- \frac{\kappa_0^4}{\kappa_1^3}\int_{\omega \times B_1} \Theta : \left( X_1 \frac{\partial^2 \psi_1}{\partial X_3^2} + X_2 \frac{\partial^2 \psi_2}{\partial X_3^2} \right) dx' \, dX = \int_{\omega \times B_1} (F^m_1 \psi_1 + F^m_2 \psi_2)\, dx' \, dX.
\end{equation*}
Since the right-hand side does not contain $\widehat v$,
$$
\int_{\Omega^{\pm} \times Y} \widehat{\sigma}^{\pm} :  \Psi^{(\pm)}(\nabla_y \widehat v)_S \, dx dy = 0,
$$
which corresponds to the strong formulation
\begin{equation*}
 \left\{
\begin{aligned}
 &\sum_{j = 1}^3 \frac{\widehat \sigma^{\pm}_{ij}}{\partial y_j} = 0, &\text{in } & \Omega^{\pm} \times Y,\\
 &\sum_{j = 1}^3 \widehat \sigma^{\pm}_{ij} = 0, &\text{on } & \partial(\Omega^{\pm} \times Y),
\end{aligned}
\right.
\end{equation*}
for $i = 1, 2, 3$. Therefore, $\widehat \sigma^{\pm} = 0$, and \eqref{unf-lim-i} is rewritten as
\begin{multline}
\int_{\Omega^{\pm} } \sigma^{\pm} : (\nabla v)_S \, dx - \frac{\kappa_0^4}{\kappa_1^3}\int_{\omega \times B_1} \Theta : \left( X_1 \frac{\partial^2 \psi_1}{\partial X_3^2} + X_2 \frac{\partial^2 \psi_2}{\partial X_3^2} \right) dx' \, dX = \\
= \int_{\Omega^{\pm}} F \, v \, dx +  \int_{\omega \times B_1} (F^m_1 \psi_1 + F^m_2 \psi_2 + F_3 v_3)\, dx' \, dX.
\end{multline}
Since the space $W^{1,\infty}(\O^+ ; \R^3)$ is dense in $H^1(\O^+;\R^3)$, the space of functions in $W^{1,\infty}(\O^- , \R^3)$ vanishing on $\Gamma$ is dense in $H^1(\O^-;\R^3)$ and the space ${\cal C}^1 (\overline{\omega} ; {\cal C}^3([0,1]))$ is dense in $L^2(\omega; H^1(0,1))$, the above equality holds for every $v$ in $\V$ and every $ \psi_1$, $\psi_2$ in $L^2(\omega  ; H^1(0,1))$ satisfying 
$$\psi_{\alpha}  (x', 0) = v_{\alpha|\Omega^-} (x', 0), \quad \psi_{\alpha}  (x', 1) = v_{\alpha|\Omega^+} (x', 0)\qquad \text{ for a.e. } x'\in \omega.$$
Finally,  integrating over $D_1$ and due to \eqref{theta-bis} we obtain the result.
\end{proof}

\section{Summarize}
\subsection{Strong formulation}
Strong formulations are the same for the cases $(i)$ and  $(ii)$. We will use the following notation.
\begin{notation}
The convolution of the functions $K$ and $F$ is
$$
(K * \widetilde{F}_{\alpha}^m) (x', X_3) = \int_0^1 K(X_3, y_3) \widetilde{F}_{\alpha}^m (x', y_3) \, dy_3.
$$
\end{notation}
Let $\{\e\}$ be a sequence of positive real numbers which tends to 0. Let $(u_{\e}, \sigma_{\e})$ be the solution of \eqref{wf} and $\widetilde{\mathcal U}_{\e}$ and $\widetilde{\mathcal R}_{\e}$ be the two first terms of the decomposition of $u_{\e}$ in $\Omega_{\e}^i$. Let $f$ satisfy assumptions \eqref{forassump}. Then the limit problems for the cases $(i), (ii)$ can be written as follows.

{\bf Bending problem in the beams}: $(\widetilde{\mathcal U}_1, \widetilde{\mathcal U}_2) \in L^2 (\omega, H^1 (0, 1))^2$ is the unique solution of the problem
\begin{equation}
\label{bend-pr}
 \left\{
\begin{array}{l}
 \ds \frac{\pi \kappa_0^4}{4 \kappa_1^3} E^m \frac{\partial^4 \widetilde{\mathcal U}_{\alpha}}{\partial X_3^4} = \widetilde{F}^m_{\alpha} \quad \text{a.e. in } \omega \times (0, 1),\\
 \ds \frac{\partial \widetilde{\mathcal U}_{\alpha}}{\partial X_3} (\cdot, \cdot, 0) = \frac{\partial\widetilde{\mathcal U}_{\alpha}}{\partial X_3} (\cdot, \cdot, 1)=0, \quad \text{a.e. in } \omega,\\
 \ds  \widetilde{\mathcal U}_{\alpha}(\cdot, \cdot, 0) = u^-_{\alpha|\Sigma},\qquad \widetilde{\mathcal U}_{\alpha}(\cdot, \cdot, 1)= u^+_{\alpha|\Sigma}, \quad \text{a.e. in } \omega,
\end{array}
\right.
\end{equation}

{\bf 3D elasticity problem} in $\Omega^+ \cup \Omega^-$: $(u^{\pm}, \sigma^{\pm}) \in (H^1(\Omega^+ \cup \Omega^-))^3 \times (L^2(\Omega))^{3\times 3}_S$ is the unique weak solution of the problem
\begin{equation}
\label{strlim1}
-\sum_{j = 1}^3 \frac{\partial \sigma^{\pm}_{ij}}{\partial x_j} = F_i \quad \text{in } \Omega^{\pm}, \quad i = 1,2,3,
\end{equation}
together with the boundary conditions
\begin{equation}
\left\{
\begin{aligned}
 &\ds \sigma_{i3}^+ = 0 &\quad &\text{in } \omega \times \{L\},&\\
 &\ds \sigma_{i3}^- = 0 &\quad &\text{in } \omega \times \{-L\},&\\
\end{aligned}
\right.
\end{equation}
and the transmission conditions
\begin{equation}
\label{strlim2}
\left\{
\begin{aligned}
 & [\sigma^{\pm}_{i 3}]_{|\Sigma} = \overline{F}_{i}^m &\quad &\text{on } \Sigma,&\\
 &[u_3^{\pm}]_{|\Sigma} = 0 &\quad &\text{on } \Sigma,&\\
 & \sigma_{\alpha 3}^+ = - \frac{3\pi \kappa_0^4}{\kappa_1^3} E^m [u_{\alpha}^{\pm}]_{|\Sigma} + \int_0^1 K_{\alpha} * \widetilde{F}_{\alpha}^m dX_3 &\quad &\text{on } \Sigma, \quad \alpha = 1, 2.&
\end{aligned}
\right.
\end{equation}

\subsubsection{Derivation of the 3D problem}
\begin{lemma}
The weak formulation of the limit problem can be rewritten as
\begin{multline}
\label{limwf}
\int_{\Omega^+ \cup \Omega^-} \sigma^{\pm} \, : (\nabla v)_S \, dx + \frac{3\pi \kappa_0^4}{\kappa_1^3} E^m \int_{\Sigma} \sum_{\alpha = 1}^2 [u_{\alpha}^{\pm}]_{|\Sigma} [v_{\alpha}^{\pm}]_{|\Sigma} \, ds  = \\
= \int_{\Omega^+ \cup \Omega^-} F \,v \,dx + \int_{\Sigma} \sum_{\alpha = 1}^3 \overline{F}_{\alpha}^m \, v_{\alpha}^- \, ds + \int_{\Sigma} \sum_{\alpha = 1}^2 [v_{\alpha}^{\pm}]_{|\Sigma} \int_0^1 K_{\alpha} * \widetilde{F}_{\alpha}^m dX_3 \, ds, \qquad \forall v\in \V,
\end{multline}
where
\begin{gather*}
 \sigma^{\pm} = \lambda^b (\mathrm{Tr} \, (\nabla u^{\pm})_S) I + 2 \mu^b (\nabla u^{\pm})_S \in L^2 (\Omega^{\pm}; \R^9),\\
 K_{\alpha} (X_3, y_3) = \delta(X_3 - y_3) X_3^2 (3 - 2 X_3) + 6 (1 - 2 X_3) \left( (X_3 - y_3) H(X_3 - y_3) + (1 - y_3)^2 (y_3 - 2 y_3 X_3 - X_3)\right).
\end{gather*}
\end{lemma}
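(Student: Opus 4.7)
I start from the weak formulation \eqref{limf} combined with the facts $\widetilde{\mathcal R}_3=0$ (Proposition \ref{prop61}) and $\widetilde{\mathcal U}'_3=0$ (Lemma \ref{lem62}), which kill the terms involving $\psi_4$ and $\Phi_3$. The goal is to eliminate the auxiliary test functions $\psi_1,\psi_2$ in favor of the boundary traces $v^\pm_{\alpha|\Sigma}$. To that end, I choose $\psi_\alpha=\psi^{\mathrm{hom}}_\alpha$, the clamped Hermite cubic interpolant on $(0,1)$ prescribed by $\psi^{\mathrm{hom}}_\alpha(0)=v^-_{\alpha|\Sigma}$, $\psi^{\mathrm{hom}}_\alpha(1)=v^+_{\alpha|\Sigma}$, $\partial_{X_3}\psi^{\mathrm{hom}}_\alpha(0)=\partial_{X_3}\psi^{\mathrm{hom}}_\alpha(1)=0$; this is an admissible element of $\V_T$.

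Using the strong bending ODE \eqref{bend-pr} together with two integrations by parts (the boundary contributions of $\partial_{X_3}\psi^{\mathrm{hom}}_\alpha$ vanish by construction), the combination $\frac{\pi\kappa_0^4}{4\kappa_1^3}E^m\int_0^1\partial^2_{X_3}\widetilde{\mathcal U}_\alpha\,\partial^2_{X_3}\psi^{\mathrm{hom}}_\alpha\,dX_3-\int_0^1\widetilde{F}^m_\alpha\psi^{\mathrm{hom}}_\alpha\,dX_3$ collapses to the pure boundary expression $\frac{\pi\kappa_0^4}{4\kappa_1^3}E^m\bigl[\partial^3_{X_3}\widetilde{\mathcal U}_\alpha(\cdot,0)\,v^-_{\alpha|\Sigma}-\partial^3_{X_3}\widetilde{\mathcal U}_\alpha(\cdot,1)\,v^+_{\alpha|\Sigma}\bigr]$. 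I then split $\widetilde{\mathcal U}_\alpha=\widetilde{\mathcal U}^{\mathrm{hom}}_\alpha+\widetilde{\mathcal U}^{\mathrm{part}}_\alpha$, where the homogeneous part is the Hermite interpolant of $u^\pm_{\alpha|\Sigma}$, giving the constant $\partial^3_{X_3}\widetilde{\mathcal U}^{\mathrm{hom}}_\alpha\equiv -12[u^\pm_\alpha]_{|\Sigma}$ and thus the Robin stiffness $\frac{3\pi\kappa_0^4}{\kappa_1^3}E^m\int_\Sigma\sum_\alpha[u^\pm_\alpha]_{|\Sigma}[v^\pm_\alpha]_{|\Sigma}\,ds$. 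For the particular part (the clamped-beam response to the load $\widetilde{F}^m_\alpha$), four successive integrations of the ODE and the resolution of the four clamped boundary conditions yield $\frac{\pi\kappa_0^4}{4\kappa_1^3}E^m\,\partial^3_{X_3}\widetilde{\mathcal U}^{\mathrm{part}}_\alpha(0)=-\int_0^1 N_1(y_3)\widetilde{F}^m_\alpha(y_3)\,dy_3$, and the mirror formula at $X_3=1$ with $+N_2$, where $N_1(t)=1-3t^2+2t^3$ and $N_2(t)=3t^2-2t^3$ are the standard Hermite cubic shape functions (satisfying $N_1+N_2=1$).

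Assembling all pieces and using $N_1+N_2=1$, the particular-part contribution splits as $v^-_{\alpha|\Sigma}\overline{F}^m_\alpha+[v^\pm_\alpha]_{|\Sigma}\int_0^1 N_2(y_3)\widetilde{F}^m_\alpha(y_3)\,dy_3$; the first piece combines with the pre-existing $\int_\omega\overline{F}^m_3 v_3\,dx'$ term (using $v^-_3=v^+_3$ on $\Sigma$) to give the full interfacial loading $\int_\Sigma\sum_{i=1}^3\overline{F}^m_i v^-_i\,ds$ in \eqref{limwf}. The only remaining step is to identify the coefficient of $[v^\pm_\alpha]_{|\Sigma}$ with $\int_0^1 K_\alpha * \widetilde{F}^m_\alpha\,dX_3$, which reduces to verifying that $\int_0^1 K_\alpha(X_3,y_3)\,dX_3=N_2(y_3)=y_3^2(3-2y_3)$: the Dirac part of $K_\alpha$ contributes this polynomial directly, while the smooth part (the Heaviside term together with the polynomial tail in $y_3$) integrates to zero in $X_3$ by a short direct computation. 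This final kernel-matching step is the main technical (though not conceptual) obstacle; the structural work is carried by the Hermite-interpolant test function and the integration-by-parts identity that converts the bending ODE into a Robin-type boundary operator on $\Sigma$.
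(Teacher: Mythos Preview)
Your proof is correct and follows the same structural ideas as the paper's: you pick the Hermite cubic interpolant as the admissible $\psi_\alpha$, and you split $\widetilde{\mathcal U}_\alpha$ into the Hermite interpolant of the traces $u^\pm_{\alpha|\Sigma}$ plus the clamped--clamped particular solution driven by $\widetilde F^m_\alpha$.

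The organizational difference is worth recording. The paper substitutes the test function directly into \eqref{limf}, writes the particular part $\widetilde{\widetilde{\mathcal U}}_\alpha$ through the Green's function $\xi_\alpha$, and leaves $\frac{d^2\xi_\alpha}{dX_3^2}$ inside the $X_3$--integral, which is how the kernel $K_\alpha$ arises in its displayed form. You instead integrate by parts twice against the strong ODE \eqref{bend-pr}, which collapses everything to the boundary shears $c\,\partial^3_{X_3}\widetilde{\mathcal U}_\alpha(\cdot,0)$ and $c\,\partial^3_{X_3}\widetilde{\mathcal U}_\alpha(\cdot,1)$; evaluating these via $\partial^3_{X_3}\xi_\alpha(0,y_3)=-N_1(y_3)$ and $\partial^3_{X_3}\xi_\alpha(1,y_3)=N_2(y_3)$ gives the compact load term $[v^\pm_\alpha]_{|\Sigma}\int_0^1 N_2(y_3)\,\widetilde F^m_\alpha(\cdot,y_3)\,dy_3$ directly. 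Your route makes the Robin structure transparent (the stiffness $12c=\frac{3\pi\kappa_0^4}{\kappa_1^3}E^m$ comes from $N_1'''\equiv 12$), and your final kernel check $\int_0^1 K_\alpha(X_3,y_3)\,dX_3=N_2(y_3)$ is indeed a one--line computation: the Dirac piece gives $N_2(y_3)$, while the two smooth pieces integrate to $-(1-y_3)^2(1+2y_3)$ and $(1-y_3)^2(1+2y_3)$ respectively and cancel. The paper's route, by contrast, produces the explicit two--variable kernel $K_\alpha$ without needing the extra $H^4$ regularity of $\widetilde{\mathcal U}_\alpha$ that you implicitly use when invoking \eqref{bend-pr} pointwise (this regularity is available since $\widetilde F^m_\alpha\in L^2$, so there is no gap).
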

\begin{proof}
\textit{Step 1.} Decomposition of $\widetilde{\mathcal U}_{\alpha}$.

Denote
$$
\begin{aligned}
{\cal V}_d&=\Big\{ \eta\in {\cal C}^3([0,1])\;| \; \eta(X_3)=(b-a)X_3^2(3 - 2X_3)+a,\;\; (a,b)\in \R^2\Big\}.
\end{aligned}
$$
Observe that a function $X_3\longmapsto\eta(X_3)=(b-a)X_3^2(3 - 2X_3)+a$ of ${\cal V}_d$ satisfies
$$\eta(0)=a,\quad \eta(1)=b,\quad {d \eta\over dX_3}(0)=0,\quad {d \eta\over dX_3}(1)=0,\quad \hbox{and}\quad {d^4 \eta\over dX_3^4}=0\enskip \hbox{in}\;\; (0,1).$$
Hence for any function $\psi\in H^2_0(0,1)$ we have
$$\int_0^1{d^2 \eta\over dX_3^2}(t){d^2 \psi\over dX_3^2} (t) \, dt=0.$$
Let $\widetilde{\widetilde{\mathcal U}_\alpha}$ be in $L^2(\omega; H^2_0(0,1))$ the solution of the following problem:
\begin{equation*}
 \left\{
\begin{array}{l}
 \ds \frac{\pi \kappa_0^4}{4 \kappa_1^3} E^m \frac{\partial^4 \widetilde{\widetilde{\mathcal U}_\alpha}}{\partial X_3^4}(x', X_3) = \widetilde{F}^m_{\alpha}(x', X_3) \quad \text{a.e. in } \omega \times (0, 1),\\
 \ds \frac{\partial \widetilde{\widetilde{\mathcal U}_\alpha}}{\partial X_3} (\cdot, \cdot, 0) = \frac{\partial\widetilde{\widetilde{\mathcal U}_\alpha}}{\partial X_3} (\cdot, \cdot, 1)=0, \quad \text{a.e. in } \omega,\\
 \ds  \widetilde{\widetilde{\mathcal U}_\alpha}(\cdot, \cdot, 0) =\widetilde{\widetilde{\mathcal U}_\alpha}(\cdot, \cdot, 1)= 0, \quad \text{a.e. in } \omega.
\end{array}
\right.
\end{equation*}
Using Green's function we can write $\widetilde{\widetilde{\mathcal U}_\alpha}$ in the following way:
\begin{equation*}
 \widetilde{\widetilde{\mathcal U}_\alpha} (x', X_3) = \frac{4 \kappa_1^3}{\pi E^m \kappa_0^4} \int_0^1 \xi_{\alpha} (X_3, y_3) \widetilde{F}_{\alpha}^m (x', y_3) \, dy_3,
\end{equation*}
where $\xi_{\alpha}$ is the solution of the equation
\begin{equation*}
 \left\{
 \begin{aligned}
  &\frac{d^4 \xi_{\alpha}}{d X_3^4} = \delta (X_3 - y_3), \quad y_3 \in (0, 1),\\
  &\frac{d \xi_{\alpha}}{d X_3} (0) = \frac{d \xi_{\alpha}}{d X_3} (1) = 0,\\
  &\xi_{\alpha} (0) = \xi_{\alpha} (1) = 0.
 \end{aligned}
 \right.
\end{equation*}
Solving the above equation we obtain
\begin{equation*}
 \xi_{\alpha} (X_3, y_3) = \frac{1}{6} (X_3 - y_3)^3 H(X_3 - y_3) - \frac{1}{6} (1 - y_3)^2 (2y_3 + 1)X_3^3 + \frac{1}{2}(1 - y_3)^2 y_3 X_3^2,
\end{equation*}
where $H$ is the Heaviside function.

The function $\widetilde{\mathcal U}_\alpha$ is uniquely decomposed as a function belonging to $L^2(\omega; {\cal V}_d)$ and a function\\ in $L^2(\omega; H^2_0(0,1))$
\begin{equation}\label{Eq.65}
\begin{aligned}
\widetilde{\mathcal U}_\alpha(x',X_3)&=(1-X_3)^2(2X_3+1)u^-_{\alpha|\Sigma}(x')-X_3^2(3 - 2X_3)u^+_{\alpha|\Sigma}(x')+\widetilde{\widetilde{\mathcal U}_\alpha}(x',X_3)\\
&=\overline{\mathcal U}_\alpha(x',X_3)+\widetilde{\widetilde{\mathcal U}_\alpha}(x',X_3)\qquad \hbox{for a.e. }(x',X_3)\in \omega\times(0,1).
\end{aligned}
\end{equation}

\textit{Step 2.}
Taking into account decomposition \eqref{Eq.65} and using as a test function $\psi_{\alpha} = [v_{\alpha}^{\pm}]_{|\Sigma} X_3^2 (3 - 2 X_3) + v_{\alpha| \Sigma}^-$ in \eqref{limf} we obtain
\begin{multline}
\int_{\Omega^+ \cup \Omega^-} \sigma^{\pm} \, : (\nabla v)_S \, dx + \frac{3\pi \kappa_0^4}{2\kappa_1^3} E^m \int_{\omega} \sum_{\alpha = 1}^2 \int_0^1 \Big(\frac{\partial^2 \overline{\mathcal U}_1}{\partial X_3^2} [v_1^{\pm}]_{|\Sigma} + \frac{\partial^2 \overline{\mathcal U}_2}{\partial X_3^2}[v_2^{\pm}]_{|\Sigma} \Big) (1 - 2X_3) \, dX_3 \, dx'  =  \\
= \int_{\Omega^+ \cup \Omega^-} F \,v \,dx + \int_{\omega} \sum_{\alpha = 1}^2 [v_{\alpha}^{\pm}]_{|\Sigma} \int_0^1 \Big(\widetilde{F}_{\alpha}^m X_3^2 (3 - 2 X_3) - \frac{3\pi \kappa_0^4}{2 \kappa_1^3} E^m \frac{\partial^2 \widetilde{\widetilde{\mathcal U}_{\alpha}}}{\partial X_3^2}(1 - 2X_3) \Big) dX_3 \, dx'+ \\
+ \int_{\omega} \sum_{\alpha = 1}^3 \overline{F}_{\alpha}^m \, v_{\alpha}^- \, dx'.
\end{multline}
Making use of the solutions for $\overline{\mathcal U}_{\alpha}$ and $\widetilde{\widetilde{\mathcal U}_{\alpha}}$ we can write
\begin{multline}
\int_{\Omega^+ \cup \Omega^-} \sigma^{\pm} \, : (\nabla v)_S \, dx + \frac{3\pi \kappa_0^4}{\kappa_1^3} E^m \int_{\omega} \sum_{\alpha = 1}^2 [u_{\alpha}^{\pm}]_{|\Sigma} [v_{\alpha}^{\pm}]_{|\Sigma} \, dx'  = \int_{\Omega^+ \cup \Omega^-} F \,v \,dx + \\
+ \int_{\omega} \sum_{\alpha = 1}^2 [v_{\alpha}^{\pm}]_{|\Sigma} \int_0^1 \Big( \widetilde{F}_{\alpha}^m X_3^2 (3 - 2 X_3) - 6 (1 - 2 X_3)\int_0^1 \frac{d^2 \xi_{\alpha}}{d X_3^2} (X_3, y_3) \widetilde{F}_{\alpha}^m (x', y_3) dy_3 \Big) dX_3 \, dx'+ \\
+ \int_{\omega} \sum_{\alpha = 1}^3 \overline{F}_{\alpha}^m \, v_{\alpha}^- \, dx'.
\end{multline}
Using the notation for convolution and the expression for $\ds \frac{d^2 \xi_{\alpha}}{d X_3^2}$ we get the result.
\end{proof}
From variational formulation \eqref{limwf} the final strong formulation is obtained.

\subsection{Convergences}
\begin{theorem}
 Under the assumptions \eqref{forassump} on the applied forces, we first have (convergence of the stress energy)
\begin{equation}\label{EQ. 7.5}
\begin{aligned}
\lim_{\e\to 0}{\cal E}(u_\e)  =
 &\int_{\Omega^+ \cup \Omega^-} \sigma^{\pm} \, : (\nabla u)_S \, dx + \frac{\pi \kappa_0^4}{4 \kappa_1^3} E^m \int_{\omega \times (0, 1)} \sum_{\alpha = 1}^2 \Big|\frac{\partial^2 \widetilde{\mathcal U}_{\alpha}}{\partial X_3^2}\Big|^2 \, dx' \, dX_3.
\end{aligned}
\end{equation}
The sequence $(u_{\e}, \sigma_{\e})$ satisfy the following convergences:
\begin{itemize}
 \item $u_{\e} \rightarrow u^-$ strongly in $H^1(\Omega^-)$,\\[2mm]
 $u_{\e}(\cdot + \delta e_3) \rightarrow u^+$ strongly in $H^1(\Omega^+)$,
 \item $\sigma_{\e} \rightarrow \sigma^-$ strongly in $L^2(\Omega^-)$,\\[2mm]
 $\sigma_{\e}(\cdot + \delta e_3) \rightarrow \sigma^+$ strongly in $L^2(\Omega^+)$,
\item $\ds \frac{\d^2}{r} \mathcal T_{\e}' (\sigma_{\e}) \rightarrow \Theta$ strongly in $L^2(\omega \times B_1)$, where
 $$\Theta_{ij} = \left\{
 \begin{aligned}
 &-E^m \Big(X_1\frac{\partial^2 \widetilde{\mathcal U}_1}{\partial X_3^2} + X_2 \frac{\partial^2 \widetilde{\mathcal U}_2}{\partial X_3^2} \Big), &\quad (i, j) = (3, 3),\\
 &0, &\quad \text{otherwise},
 \end{aligned}
 \right.$$
 \item  $\delta \mathcal T_{\e}(\widetilde{\mathcal R}_{\e, \alpha}) \rightarrow \widetilde{\mathcal R}_{\alpha}, \quad \alpha = 1,2$ strongly in $L^2 (\omega; H^1 (0, 1)),$
 \item  $ \mathcal T_{\e} (\widetilde{\mathcal U}_{\e,\alpha}) \rightarrow \widetilde{\mathcal U}_{\alpha}, \quad \alpha = 1,2$ strongly in $L^2(\omega; H^1 (0, 1))$,\\[2mm]
$ \mathcal T_{\e} (\widetilde{\mathcal U}_{\e,3}) \rightarrow u_3^{\pm} (\cdot , 0)$ strongly in $L^2(\omega; H^1(0, 1)),$\\[2mm]
$\ds\frac{\delta}{r}\mathcal T_{\e}(\widetilde{\mathcal U}_{\e,3} - \widetilde{\mathcal U}_{\e,3} (\cdot, \cdot, 0)) \rightarrow 0$ strongly in $L^2 (\omega; H^1 (0, 1))$.
\end{itemize}
\end{theorem}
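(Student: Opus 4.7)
The strategy is the classical energy-convergence argument. First pass to the limit in the energy identity $\mathcal E(u_\e)=\int_{\Omega_\e}f_\e\cdot u_\e\,dx$ given by \eqref{(2.9)}; second, identify this limit with the quadratic part of \eqref{limf} evaluated at the limit fields themselves; third, use weak lower semi-continuity on the two non-negative pieces of $\mathcal E(u_\e)$ separately to force the limits to be attained in each piece; and finally, translate these quadratic convergences into the announced strong convergences via coercivity of Hooke's law and Korn's inequality.

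For the right-hand side of the energy identity, split $\int_{\Omega_\e}f_\e\cdot u_\e\,dx$ into contributions from $\Omega^-$, $\Omega^+_\e$ and $\Omega^i_\e$. The first two pass to $\int_{\Omega^\pm}F\cdot u^\pm\,dx$ using \eqref{ulim1}--\eqref{ulim2}. For the beam contribution, unfolding with $\mathcal T'_\e$ and plugging in \eqref{forassump} yields $\int_{\Omega^i_\e}f_\e\cdot u_\e\,dx=\int_{\omega\times B_1}F^m\cdot\mathcal T'_\e(u_\e)\,dx'dX$; Proposition \ref{prop41} (together with $\widetilde{\mathcal U}_3=u_3^\pm|_\Sigma$, $\widetilde{\bar u}\in L^2$) shows $\mathcal T'_\e(u_\e)\to(\widetilde{\mathcal U}_1,\widetilde{\mathcal U}_2,u_3^-|_\Sigma)$ strongly in $L^2(\omega\times B_1;\R^3)$, and the limit is $\int_{\omega\times(0,1)}\widetilde F^m_\alpha\widetilde{\mathcal U}_\alpha\,dx'dX_3+\int_\omega\overline F^m_3\,u_3^-|_\Sigma\,dx'$. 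On the other side, testing \eqref{limf} with $(u^\pm,\widetilde{\mathcal U}_1,\widetilde{\mathcal U}_2,0,0)$ and $\Phi_3=0$ -- admissible since $\widetilde{\mathcal R}_3=0$ (Proposition \ref{prop61}) and $\widetilde{\mathcal U}'_3=0$ (Lemma \ref{lem62}) -- produces exactly $\int_{\Omega^\pm}\sigma^\pm:(\nabla u^\pm)_S\,dx+\frac{\pi\kappa_0^4}{4\kappa_1^3}E^m\int|\partial^2_{X_3}\widetilde{\mathcal U}_\alpha|^2=$ the same right-hand side. This gives \eqref{EQ. 7.5}.

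Next, decompose $\mathcal E(u_\e)=A_\e+B_\e$ with $A_\e=\int_{\Omega^-\cup\Omega^+_\e}\sigma_\e:(\nabla u_\e)_S\,dx$ and $B_\e=\int_{\Omega^i_\e}\sigma_\e:(\nabla u_\e)_S\,dx$, both non-negative. Weak lower semi-continuity of the Hooke quadratic form, together with \eqref{ulim1}--\eqref{ulim2}, gives $\liminf A_\e\ge\int_{\Omega^\pm}\sigma^\pm:(\nabla u^\pm)_S\,dx$. For $B_\e$, unfolding turns it into $\frac{r^4}{\e^2\d^3}\int_{\omega\times B_1}\bigl(\tfrac{\d^2}{r}\mathcal T'_\e(\sigma_\e)\bigr):\bigl(\tfrac{\d^2}{r}\mathcal T'_\e((\nabla u_\e)_S)\bigr)\,dx'dX$. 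In cases $(i)$ and $(ii)$ one has $r^4/(\e^2\d^3)\to\kappa_0^4/\kappa_1^3$; combined with Lemma \ref{symmX} and the weak convergence $\tfrac{\d^2}{r}\mathcal T'_\e(\sigma_\e)\rightharpoonup\Theta$, weak lower semi-continuity yields $\liminf B_\e\ge\frac{\kappa_0^4}{\kappa_1^3}\int_{\omega\times B_1}\Theta:X\,dx'dX$. Using \eqref{theta-bis} and \eqref{beams-grad}, together with $\int_{D_1}X_\alpha X_\beta\,dX_1dX_2=\frac{\pi}{4}\delta_{\alpha\beta}$, this reduces to $\frac{\pi\kappa_0^4}{4\kappa_1^3}E^m\int|\partial^2_{X_3}\widetilde{\mathcal U}_\alpha|^2$. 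Since the sum of the two lower bounds equals $\lim\mathcal E(u_\e)$, both $\liminf$'s must be equalities.

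The strong convergences then follow by the standard $\int\sigma(u_\e-u^\pm):(\nabla(u_\e-u^\pm))_S\to0$ trick applied in $\Omega^\pm$: coercivity of Hooke's law gives the strong $L^2$-convergence of $(\nabla u_\e)_S$ and $(\nabla u_\e(\cdot+\d e_3))_S$, Korn's inequality \eqref{Eq19} upgrades this to strong $H^1$-convergence of $u_\e$ and $u_\e(\cdot+\d e_3)$, and Hooke's law yields the strong $L^2$-convergence of $\sigma_\e$, $\sigma_\e(\cdot+\d e_3)$. The same trick applied to the unfolded bilinear form in $\omega\times B_1$ gives the strong $L^2$-convergence of $\tfrac{\d^2}{r}\mathcal T'_\e((\nabla u_\e)_S)$ to $X$, whence the strong convergence $\tfrac{\d^2}{r}\mathcal T'_\e(\sigma_\e)\to\Theta$. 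The strong convergences of $\delta\mathcal T_\e(\widetilde{\mathcal R}_{\e,\alpha})$, $\mathcal T_\e(\widetilde{\mathcal U}_{\e,\alpha})$ and $\mathcal T_\e(\widetilde{\mathcal U}_{\e,3})$ then result from their weak limits being identified and the corresponding quadratic $L^2$-norms converging (by the bending term just controlled), while $\tfrac{\d}{r}\mathcal T_\e(\widetilde{\mathcal U}_{\e,3}-\widetilde{\mathcal U}_{\e,3}(\cdot,\cdot,0))\to 0$ is precisely Lemma \ref{lem62} reformulated.

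\textbf{Main obstacle.} The delicate step is (3): showing that the bulk and beam pieces of $\mathcal E(u_\e)$ are \emph{individually} captured by the correct limit forms, which hinges on the exact critical scaling $r^4/(\e^2\d^3)\to\kappa_0^4/\kappa_1^3$ characterizing cases $(i)$--$(ii)$, on the vanishing of the potentially cross contributions from $\widetilde{\mathcal R}_3$ and $\widetilde{\mathcal U}'_3$ (which is why Lemma \ref{lem62} and Proposition \ref{prop61} are crucial), and on the explicit moment computation $\int_{D_1}X_\alpha X_\beta=\frac{\pi}{4}\delta_{\alpha\beta}$ that produces the coefficient $\pi\kappa_0^4/(4\kappa_1^3)$. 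Once this is in place, everything else is routine.
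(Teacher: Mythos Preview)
Your plan matches the paper's proof closely: energy identity \eqref{(2.9)}, limit of the right-hand side via Proposition~\ref{prop41}, identification of that limit with the quadratic form through \eqref{limf} tested at the solution, splitting $\mathcal E(u_\e)=A_\e+B_\e$ into non-negative bulk and beam pieces with separate weak lower semi-continuity (the paper uses the algebraic decomposition \eqref{EQ.770} to break the beam Hooke form into non-negative summands), and then strong convergences from equality in each liminf.

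Three points in your last paragraph are imprecise or wrong. First, the strong $H^1$-convergences of $\delta\mathcal T_\e(\widetilde{\mathcal R}_{\e,\alpha})$, $\mathcal T_\e(\widetilde{\mathcal U}_{\e,\alpha})$, $\mathcal T_\e(\widetilde{\mathcal U}_{\e,3})$ do \emph{not} follow from ``their quadratic $L^2$-norms converging via the bending term'': that term is the limit value, not a norm of the $\e$-sequence. The actual mechanism (used in the paper) is that once $\tfrac{\d^2}{r}\mathcal T'_\e((\nabla u_\e)_S)\to X$ strongly in $L^2(\omega\times B_1)$, one takes $D_1$-moments of its $(3,3)$-component and uses the orthogonality relations \eqref{prop-warp} to isolate $\tfrac{\d}{r}\partial_{X_3}\mathcal T_\e(\widetilde{\mathcal U}_{\e,3})$ and $\d\partial_{X_3}\mathcal T_\e(\widetilde{\mathcal R}_{\e,\alpha})$ individually; together with the trace estimates \eqref{Diff} and \eqref{estu1}--\eqref{estu2} this yields the stated $H^1$-convergences. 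Second, the last convergence $\tfrac{\d}{r}\mathcal T_\e(\widetilde{\mathcal U}_{\e,3}-\widetilde{\mathcal U}_{\e,3}(\cdot,\cdot,0))\to 0$ strongly is \emph{not} ``Lemma~\ref{lem62} reformulated'': that lemma only shows the \emph{weak} limit $\widetilde{\mathcal U}'_3$ vanishes, which is strictly weaker; the strong convergence comes from the same moment-projection argument (integrate the $(3,3)$-strain over $D_1$). Third, for $\Omega^+$ you cannot simply invoke \eqref{Eq19}, which is Korn on $\Omega^-$ only; the paper first pushes strong trace convergence through the chain $\Sigma\to$ beams $\to\Sigma^+_\d$ and then uses \eqref{estset0}$_2$ to conclude strong $H^1(\Omega^+)$-convergence.
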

\begin{proof}
\textit{Step 1.} We prove \eqref{EQ. 7.5}.

We first recall the classical identity: if $T$ is a symmetric $3\times 3$ matrix we have
\begin{equation}\label{EQ.770}
\begin{aligned}
\lambda^m Tr(T) Tr(T) + \sum_{i,j=1}^3 2\mu^m T_{ij} T_{ij} &= E^m T_{33}^2 +  \frac{E^m}{(1+\nu^m)(1-2\nu^m)}(T_{11}+T_{22}+2\nu^m T_{33})^2\\
 &+\frac{E^m}{2(1+\nu^m)}[(T_{11}-T_{22})^2 + 4(T_{12}^2+T_{13}^2+T_{23}^2)].
\end{aligned}
\end{equation}
Now, we consider the total elastic energy of displacement $u_\e$ given by  \eqref{(2.9)}
\begin{equation}\label{EQ.77}
{\cal E}(u_\e)=\int_{\Omega_{\e}} \sigma_{\e} : (\nabla u_\e)_S \, dx = \int_{\Omega_{\e}} f_{\e}\cdot u_\e\,dx.
\end{equation}
The left hand side of \eqref{EQ.77} is
\begin{equation}\label{EQ.78}
\begin{aligned}
{\cal E}(u_\e)&=\int_{\Omega_{\e}} \sigma_{\e} : (\nabla u_\e)_S\, dx \\
& = \int_{\Omega^{-}} \sigma_{\e} : (\nabla u_\e)_S dx +\int_{\Omega^{i}_{\e}} \sigma_{\e} : (\nabla u_\e)_S dx +\int_{\Omega^{+}_{\e}} \sigma_{\e} : (\nabla u_\e)_S dx \\
& = \int_{\Omega^{-}} \sigma_{\e} : (\nabla u_\e)_S dx +{r^2\d\over \e^2}\int_{\omega\times B_1} {\cal T}'_\e(\sigma_{\e}) : {\cal T}'_\e\big((\nabla u_\e)_S\big) dx' dX +\int_{\Omega^{+}_{\e}} \sigma_{\e} : (\nabla u_\e)_S dx. 
\end{aligned}
\end{equation}
The second term of the right hand side of the above equation is transformed using the identity \eqref{EQ.770}
$$
\begin{aligned}
&\int_{\omega\times B_1} {\cal T}'_\e(\sigma_{\e}) : {\cal T}'_\e\big((\nabla u_\e)_S\big) dx' dX\\
 = &\int_{\omega\times B_1} \Big( E^m ((\nabla u_\e)_S^{33})^2 +  \frac{E^m}{(1+\nu^m)(1-2\nu^m)} \Big((\nabla u_\e)_S^{11}+ (\nabla u)_S^{22}+2\nu^m (\nabla u_\e)_S^{33} \Big)^2  \\
+ &\frac{E^m}{2(1+\nu^m)} \left[\Big((\nabla u_\e)_S^{11} - (\nabla u_\e)_S^{22} \Big)^2+ 4 \Big(((\nabla u_\e)_S^{12})^2 + ((\nabla u_\e)_S^{13})^2 + ((\nabla u_\e)_S^{23})^2 \Big) \right] \Big) dx'dX.
\end{aligned}
$$ Then by standard weak lower-semi-continuity, Lemma \ref{symmX} and  \eqref{theta} give (we recall that $\widetilde{\cal R}_3=0$)
\begin{equation}
\label{inf}
 \int_{\Omega^+ \cup \Omega^-} \sigma^{\pm} \, : (\nabla u)_S \, dx +\frac{\kappa_0^4}{\kappa_1^3} \int_{\omega\times B_1}  E^m ((\nabla u)_S^{33})^2 \, dx'dX \leq \liminf_{\e \to 0} {\cal E}(u_\e).
\end{equation}
Besides, the convergences in Proposition \ref {prop41} lead to
$$
\begin{aligned}
\limsup_{\e \to 0}{\cal E}(u_\e)&=\limsup_{\e \to 0} \int_{\Omega_{\e}} f_{\e}\cdot u_\e\,dx=\lim_{\e \to 0} \int_{\Omega_{\e}} f_{\e}\cdot u_\e\,dx\\
&=\int_{\Omega^+ \cup \Omega^-} F \cdot u \,dx + \int_{\omega \times (0, 1)} \sum_{\alpha = 1}^2 \widetilde{F}_{\alpha}^m \widetilde{\cal U}_{\alpha} dx' \, dX_3 + \int_{\omega} \overline{F}_3^m u_3 dx'.
\end{aligned}
$$
Hence
$$ \limsup_{\e \to 0}{\cal E}(u_\e) \leq \liminf_{\e \to 0} {\cal E}(u_\e).$$
Therefore (we recall that $\widetilde{\cal U}'_3=0$),
\begin{equation}\label{Conv711}
\lim_{\e \to 0}{\cal E}(u_\e) = \int_{\Omega^+ \cup \Omega^-} \sigma^{\pm} \, : (\nabla u)_S \, dx + \frac{\pi \kappa_0^4}{4 \kappa_1^3} E^m \int_{\omega \times (0, 1)} \sum_{\alpha = 1}^2 \Big|\frac{\partial^2 \widetilde{\mathcal U}_{\alpha}}{\partial X_3^2}\Big|^2 \, dx' \, dX_3.
\end{equation}
\noindent \textit{Step 2.}  As immediate consequence of the above convergence \eqref{Conv711} we have 
\begin{gather}
\label{conv-st}
\begin{aligned}
&\sigma_{\e} \rightarrow \sigma^- \quad \text{ strongly in }L^2(\Omega^-),\\
&\sigma_{\e}(\cdot + \delta e_3) \rightarrow \sigma^+ \quad \text{ strongly in }L^2(\Omega^+),\\
&\frac{\delta^2}{r} \mathcal T_{\e}^{'} (\sigma_{\e}) \rightarrow \Theta, \quad \text{ strongly in } L^2 (\omega \times B_1).
\end{aligned}
\end{gather}
Hence
\begin{gather}
\label{conv-st-bis}
\begin{aligned}
&(\nabla u_{\e})_S \rightarrow (\nabla u^-)_S \quad \text{ strongly in }L^2(\Omega^-),\\
&(\nabla u_{\e} (\cdot + \delta e_3))_S \rightarrow (\nabla u^+)_S \quad \text{ strongly in }L^2(\Omega^+),\\
&\frac{\delta^2}{r} \mathcal T_{\e}^{'} ((\widetilde{\nabla u_{\e}})_S) \rightarrow X \quad \text{ strongly in } L^2 (\omega \times B_1).
\end{aligned}
\end{gather}
From \eqref{conv-st-bis}$_1$ and the Korn inequality  \eqref{Eq19} in $\O^-$ we deduce that
$$u_{\e} \rightarrow u^-\;\;\hbox{ strongly in }\;\; H^1(\Omega^-).$$
The above strong convergence and estimates \eqref{estu1}-\eqref{estu2} yield
\begin{equation}\label{EQ.710}
u_{\e}(\cdot, 0)1_{\widehat{\omega}_\e}\rightarrow u^-_{|\Sigma}=\widetilde{\cal U}(\cdot, 0)\;\; \hbox{strongly in } L^2(\Sigma).
\end{equation}
From \eqref{conv-st-bis}$_3$ and \eqref{beams-grad}$_4$ we derive that
\begin{multline*}
 \frac{\delta}{r}\frac{\partial \mathcal T_{\e}(\widetilde{\mathcal U}_{\e,3})}{\partial X_3} + \delta\frac{\partial \mathcal T_{\e}(\widetilde{\mathcal R}_{\e,1})}{\partial X_3} X_2  - \delta\frac{\partial \mathcal T_{\e}(\widetilde{\mathcal R}_{\e,2})}{\partial X_3} X_1 + \frac{\delta^2}{r} \mathcal T_{\e}^{'}((\widetilde{\nabla \bar u_{\e}})_S)_{33} \rightarrow  \frac{\partial \widetilde{\mathcal R}_1}{\partial X_3} X_2  - \frac{\partial \widetilde{\mathcal R}_2}{\partial X_3} X_1 \\ \text{strongly in } L^2(\omega \times B_1).
\end{multline*}
Hence, equalities \eqref{prop-warp}$_1$-\eqref{prop-warp}$_3$ with estimates \eqref{Diff}$_1$ - \eqref{diffu1} and convergence \eqref{EQ.710} lead to
\begin{gather}
\label{conv-r}
 \delta \mathcal T_{\e}(\widetilde{\mathcal R}_{\e, \alpha}) \rightarrow \widetilde{\mathcal R}_{\alpha}, \quad \alpha = 1,2 \quad \text{ strongly in } L^2 (\omega; H^1 (0, 1)),\\
 \frac{\delta}{r}\mathcal T_{\e}(\widetilde{\mathcal U}_{\e,3} - \widetilde{\mathcal U}_{\e,3} (\cdot, \cdot, 0)) \rightarrow 0 \quad \text{ strongly in } L^2 (\omega; H^1 (0, 1)),\\
\label{conv-du}
\mathcal T_{\e}(\widetilde{\mathcal U}_{\e,3}) \rightarrow u^{-}_{3|\Sigma} =u^{+}_{3|\Sigma} \quad \text{ strongly in } L^2 (\omega , H^1(0, 1)).
\end{gather}
The fourth estimate in Lemma \ref{unfest}, the convergences \eqref{EQ.710}-\eqref{conv-r} and the equalities \eqref{uc6} imply that
\begin{equation}\label{EQ.718}
 \mathcal T_{\e} (\widetilde{\mathcal U}_{\e,\alpha}) \rightarrow \widetilde{\mathcal U}_{\alpha}, \quad \alpha = 1,2 \quad \text{ strongly in } L^2(\omega; H^1 (0, 1)).
\end{equation}
From convergences \eqref{conv-du}-\eqref{EQ.718} and estimates \eqref{diffu1}-\eqref{diffu2} we obtain
\begin{equation}\label{EQ.719}
u_{\e}(\cdot, \delta)1_{\widehat{\omega}_\e}\rightarrow u^+_{|\Sigma}=\widetilde{\cal U}(\cdot, 1)\;\; \hbox{strongly in } L^2(\Sigma).
\end{equation}
Finally, due to the above strong convergence together with \eqref{conv-st-bis}$_2$ and \eqref{estset0}$_2$, we get  $u_{\e}(\cdot + \delta e_3) \rightarrow u^+$ strongly in $H^1(\Omega^+)$. 
\end{proof}

\section{Complements}

The case
\begin{equation*}
r = \kappa_1 \e^2, \quad \delta = \kappa_2 \e^2, \quad \kappa_1, \kappa_2 > 0,
\end{equation*}
can also be considered, but should be studied separately. The structure obtained will no longer correspond to the set of the thin beams but to some kind of  perforated domain.
%

\section{Appendix}
\noindent Let $\chi$ be in ${\cal C}^\infty_c(\R^2)$ such that $\chi(y)=1$ in   $D_1$.

\begin{lemma}\label{lem61} Let $\phi$ be in $W^{1,\infty}(\omega)$ and $\phi_{\e,r}$ defined by
$$
\phi_{\e,r}(x')=\chi\Big({\e\over r}\Big\{{x'\over \e}\Big\}_Y\Big)\phi\Big(\e\Big[{x'\over \e}\Big]_Y\Big)+\Big[1-\chi\Big({\e\over r}\Big\{{x'\over \e}\Big\}_Y\Big)\Big]\phi(x')\quad \hbox{for a.e. }\;x'\in \omega.
$$
If $\ds {r\over \e}\to 0$ then for every $p\in [1,+\infty)$ we have
$$
\phi_{\e,r}\longrightarrow \phi\quad \hbox{strongly in } \; W^{1,p}(\omega).
$$ 
\end{lemma}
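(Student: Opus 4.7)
Since $\chi\in {\cal C}^\infty_c(\R^2)$, there is $R>0$ with $\mathrm{supp}(\chi)\subset B(0,R)$. Set
$$\chi_{\e,r}(x')\stackrel{\text{def}}{=}\chi\Big({\e\over r}\Big\{{x'\over \e}\Big\}_Y\Big),\qquad A_{\e,r}\stackrel{\text{def}}{=}\{x'\in\omega\;|\;\chi_{\e,r}(x')\neq 0\}.$$
The key geometric observation driving everything is that if $x'\in A_{\e,r}$, then $|\{x'/\e\}_Y|\le Rr/\e$, so in particular $|x'-\e[x'/\e]_Y|\le Rr$, and moreover $|A_{\e,r}|\le C|\omega|(r/\e)^2\to 0$. (Without loss of generality $\phi$ is extended as a Lipschitz function to $\R^2$, so $\phi(\e[x'/\e]_Y)$ is well defined near $\partial\omega$.)

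First I would establish $L^p$ convergence. Since $\phi_{\e,r}-\phi=\chi_{\e,r}\,[\phi(\e[\cdot/\e]_Y)-\phi]$ and $\phi$ is Lipschitz with constant $\|\nabla\phi\|_{L^\infty}$, the geometric observation gives the pointwise bound
$$|\phi_{\e,r}(x')-\phi(x')|\le \|\chi\|_\infty\|\nabla\phi\|_{L^\infty}\,R\,r\cdot \mathbbm{1}_{A_{\e,r}}(x').$$
Raising to power $p$ and integrating yields $\|\phi_{\e,r}-\phi\|_{L^p(\omega)}^p\le C r^p(r/\e)^2\to 0$.

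For the gradient, I compute, using that $[x'/\e]_Y$ is locally constant on each $\e$-cell and that $\nabla\{x'/\e\}_Y=(1/\e)\mathrm{Id}$ (away from cell boundaries),
$$\nabla\phi_{\e,r}(x')=\frac{1}{r}\nabla\chi\Big({\e\over r}\Big\{{x'\over \e}\Big\}_Y\Big)\big[\phi(\e[x'/\e]_Y)-\phi(x')\big]+\big(1-\chi_{\e,r}(x')\big)\nabla\phi(x').$$
The second term converges to $\nabla\phi$ in $L^p(\omega)$ by dominated convergence: it is bounded by $\|\nabla\phi\|_{L^\infty}$ and $\chi_{\e,r}\to 0$ almost everywhere (because $|A_{\e,r}|\to 0$).

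The main (and essentially only) obstacle is the first term, which looks dangerous because of the factor $1/r$. The resolution relies on a cancellation: $|\nabla\chi_{\e,r}|\le \|\nabla\chi\|_\infty/r$, while on $A_{\e,r}$ we have $|\phi(\e[x'/\e]_Y)-\phi(x')|\le\|\nabla\phi\|_{L^\infty} R\, r$. The two opposite powers of $r$ cancel exactly, producing the pointwise bound
$$\Big|\frac{1}{r}\nabla\chi\Big({\e\over r}\Big\{{x'\over \e}\Big\}_Y\Big)\big[\phi(\e[x'/\e]_Y)-\phi(x')\big]\Big|\le C\cdot \mathbbm{1}_{A_{\e,r}}(x'),$$
with $C$ depending only on $\chi$ and $\|\nabla\phi\|_{L^\infty}$. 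Integrating the $p$-th power gives an upper bound $C^p|A_{\e,r}|\le C^p|\omega|(r/\e)^2\to 0$, so this term vanishes in $L^p(\omega)$ for every finite $p$. Combining with the $L^p$ convergence of $\phi_{\e,r}$ itself yields $\phi_{\e,r}\to\phi$ strongly in $W^{1,p}(\omega)$.
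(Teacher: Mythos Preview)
Your argument is correct and follows essentially the same route as the paper's proof: extend $\phi$ to a Lipschitz function on $\R^2$, observe that the support of $\chi_{\e,r}$ in each cell has radius $\sim r$ (hence total measure $\sim (r/\e)^2|\omega|$), and use the Lipschitz bound $|\phi(\e[x'/\e]_Y)-\phi(x')|\le C r$ on that support to cancel the $1/r$ coming from $\nabla\chi_{\e,r}$. One small remark: the implication ``$|A_{\e,r}|\to 0 \Rightarrow \chi_{\e,r}\to 0$ a.e.'' is not valid in general (only up to a subsequence), but you don't actually need it---the direct bound $\|\chi_{\e,r}\nabla\phi\|_{L^p}^p\le \|\chi\|_\infty^p\|\nabla\phi\|_\infty^p\,|A_{\e,r}|\to 0$ already gives the $L^p$ convergence of that term.
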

\begin{proof}
For the sake of simplicity we extend $\phi$ in a function belonging to $W^{1,\infty}(\R^2)$ still denoted $\phi$. We denote
$$\widetilde{\Xi}_\e=\Big\{\xi\in \Z^2\;;\; (\e \xi+\e Y)\cap \omega\not= \emptyset\Big\}.$$  Observe that $\Xi_\e \subset \widetilde{\Xi}_\e$. Consider the following estimate:
\begin{equation}\label{Linf}
\begin{aligned}
 \| \phi_{\e,r} - \phi \|_{L^\infty(\omega)} = & \left\| \chi\Big({\e\over r}\Big\{{\cdot\over \e}\Big\}_Y\Big) \left( \phi\Big(\e\Big[{\cdot\over \e}\Big]_Y \Big) - \phi \right) \right\|_{L^\infty (\omega)} \leq  \sup_{\xi \in \widetilde{\Xi}_\e} \left\| \chi\Big({\cdot\over r}\Big) ( \phi(\e\xi) - \phi(\e \xi + \cdot)) \right\|_{L^\infty (Y_{\e})} \\ 
= & \sup_{\xi \in \widetilde{\Xi}_\e} \left\| \chi\Big({\e \over r} \cdot\Big) ( \phi(\e\xi) - \phi(\e \xi + \e \cdot)) \right\|_{L^\infty (Y)} \leq  \e \|\chi\|_{L^\infty(\R^2)}\|\nabla \phi\|_{L^\infty(\R^2)}.
\end{aligned}
\end{equation}
The partial derivative of $\phi_{\e,r} - \phi$ with respect to $x_\alpha$ is 
$$
\begin{aligned}
&\frac{\partial(\phi_{\e,r} - \phi)}{\partial x_{\alpha}} (x') ={1\over r}  \frac{\partial \chi}{\partial X_{\alpha}}\Big({\e\over r}\Big\{{x'\over \e}\Big\}_Y\Big) \left( \phi\Big(\e\Big[{x'\over \e}\Big]_Y \Big) - \phi(x')\right) - \chi\Big({\e\over r}\Big\{{x'\over \e}\Big\}_Y\Big) \frac{\partial \phi}{\partial x_{\alpha}}(x'),\quad \hbox{for a.e. } x'\in \omega,\\
&\frac{\partial(\phi_{\e,r} - \phi)}{\partial x_{\alpha}} (\e \xi+\e y') ={1\over r}  \frac{\partial \chi}{\partial X_{\alpha}}\Big({\e\over r} y' \Big) \left( \phi(\e \xi) - \phi(\e \xi+\e y')\right) - \chi\Big({\e\over r}y'\Big) \frac{\partial \phi}{\partial x_{\alpha}}(\e \xi+\e y'),\quad \xi\in \widetilde{\Xi}_\e,\;\; \hbox{for a.e. } y'\in Y.
\end{aligned}
$$
\noindent Since $\chi$ has a compact support in $\R^2$, there exists $R>0$ such that $\hbox{supp}(\chi)\subset D_R$. Thus, the support of the function $y'\longmapsto \ds  \chi\Big({\e\over r}y'\Big)$ is included in the disc $D_{rR/\e}$. As a consequence we get for a.e. $y'\in D_{rR/\e}$
$$\left| \phi(\e \xi) - \phi(\e \xi+\e y')\right|\le r R\|\nabla\phi\|_{L^\infty(\R^2)}.$$ 
 Using the above estimate, for the norms of the derivatives we first have
$$
\begin{aligned}
 \left\| \frac{\partial(\phi_{\e,r} - \phi)}{\partial x_{\alpha}} \right\|^p_{L^p (\e\xi+\e Y)} &=\e^{2} \left\|\frac{\partial \chi}{\partial X_{\alpha}}\Big({\e \over r}\cdot\Big){ \phi (\e\xi) - \phi(\e\xi+\e \cdot)\over r} - \chi\Big({\e\over r}\cdot \Big) \frac{\partial \phi}{\partial x_{\alpha}} (\e\xi+\e\cdot) \right\|^p_{L^p (Y)} \\
&\leq  Cr^{2} \|\nabla\chi\|^p_{L^\infty(\R^2)}\|\nabla\phi\|^p_{L^\infty(\R^2)}.
\end{aligned}
$$
The constant does not depend on $\e$ and $r$. Combining the above estimates for $\xi\in \widetilde{\Xi}_\e$, that gives 
\begin{equation}\label{GLinf}
 \left\| \nabla (\phi_{\e,r} - \phi) \right\|_{L^p (\omega)} \leq  C \Big({r \over \e}\Big)^{2/p}\|\nabla\chi\|_{L^\infty(\R^2)}\|\nabla\phi\|_{L^\infty(\R^2)}.
 \end{equation}
The constant does not depend on $r$ and $\e$. 
Hence, estimates \eqref{Linf} and \eqref{GLinf} imply that $\phi_\e$ strongly converges toward $\phi$ in $W^{1,p}(\omega)$.
\end{proof}

\section{Simulation results}
In this section solutions $u_{r, \e, \d}$ of the equation \eqref{strf} are compared with the solution $u$ of \eqref{strlim1}--\eqref{strlim2} for the case $n = 2$. The solutions $u_{r, \e, \d}$ are computed numerically for different $r, \e, \d$ with the commercial finite element software COMSOL Multiphysics. The relation between the parameters is chosen in a following way
$$ r = \e^{3/2}, \qquad \d = \e^{4/3}, $$
what corresponds to the Case~$(ii)$ with $\eta_0 = 1.5, \kappa_0 = \kappa_1 = 1$. Comparison between sequence of the solutions $u_{\e}$ and $u$ is done for jumps in displacement and stress. Components of the jumps are computed for different $\e$ and it is shown that the following norms tend to 0 as $\e$ tends to 0:
$$ \| [u_{\e,1}] - [u_1] \|_{L^2 (\Sigma)}, \quad \| [u_{\e, 2}] \|_{L^2 (\Sigma)}, \quad \| \sigma^+_{\e,12} - \sigma^+_{12} \|_{L^2 (\Sigma)}, \quad \| \sigma^+_{\e,22} - \sigma^+_{22} \|_{L^2 (\Sigma)}.$$
The stiffness coefficients and applied force are chosen as follows
$$ E = 2\cdot 10^{11}, \qquad \nu = 0.3, \qquad f_{\e} = (10^3, 10^3).$$
 \begin{figure}[h!]
\centering
\label{sim1}
  \subfigure[Stresses $\sigma_{eq}^+, \sigma_{eq}^-$ in the regions $\Omega^+ \cup \Omega^-$]{\includegraphics[width = 0.4\textwidth]{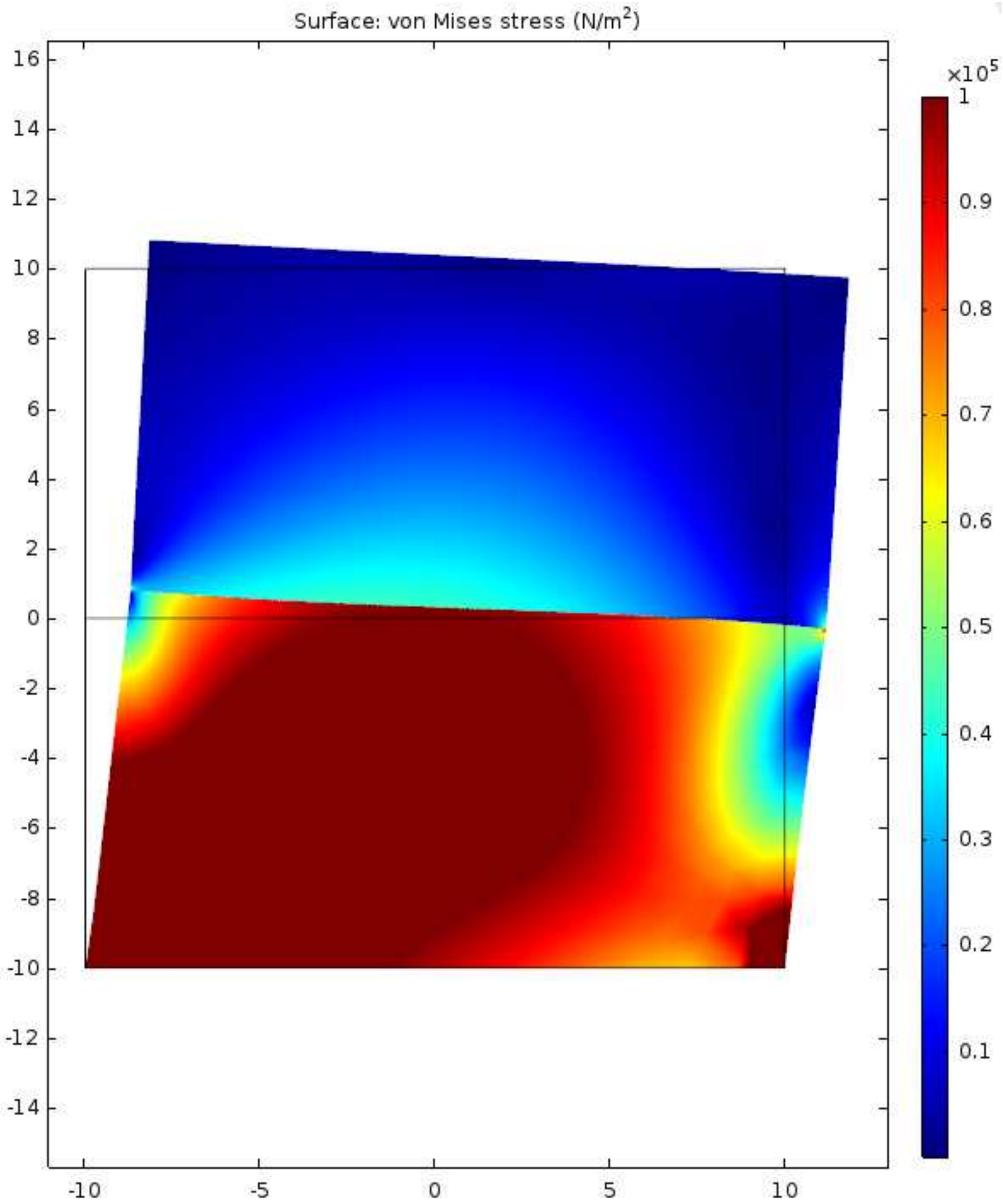}} \hspace{2cm}
  \label{sim2}
  \subfigure[Stresses $\sigma_{\e, eq}$ in the layer $\Omega_{\e}^i$]{\includegraphics[width = 0.4\textwidth]{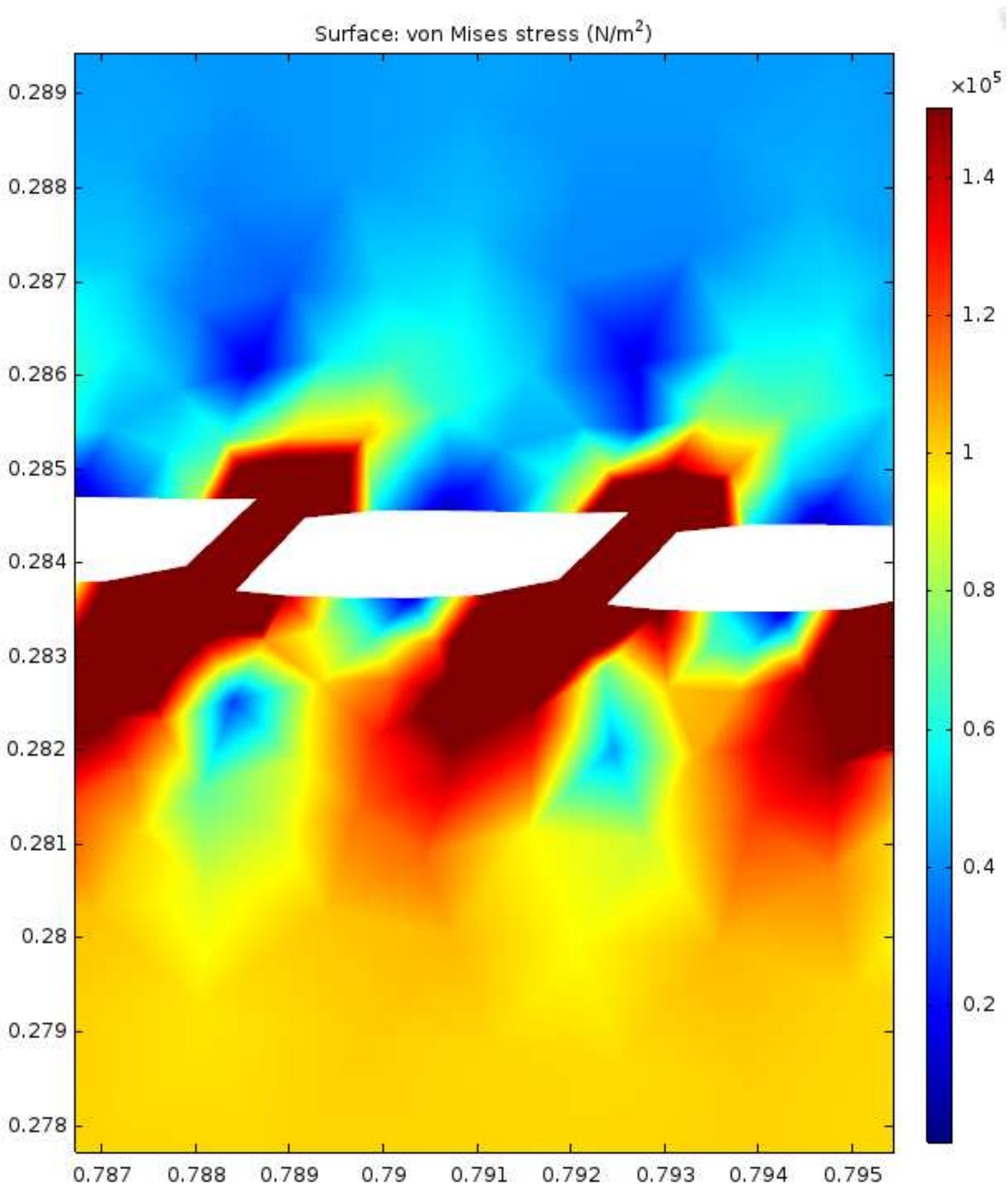}}
\caption{Simulation results}
\label{sim}
 \end{figure}
 
The equivalent von Mises stresses for macro-- and (local) micro--problems are presented in Fig.~\ref{sim}. Fig.~\ref{sim}~(a) provides the solution of the equation \eqref{strlim1}--\eqref{strlim2} in macroscopic blocks and jumps in the equivalent von Mises stresses across the interface can be observed. Fig.~\ref{sim}~(b) shows the local $\e$--solution in the layer for $\e = 0.004$.

Comparison results for chosen $\e$ are gathered in the Table \ref{conv-sim}.
 
  \begin{table}[h!]
  \centering
 \begin{tabular}{|c|c|c|c|c|}
 \hline
 $\e$ & {\small $\| [u_{\e,1}] - [u_1] \|$} &  {\small $\| [u_{\e, 2}] \|$} &  {\small $\| \sigma^+_{\e,12} - \sigma^+_{12} \|$} &  {\small $\| \sigma^+_{\e,22} - \sigma^+_{22} \|$} \\ \hline
 $0.1$ & $2.9 \cdot 10^{-5}$ & $3 \cdot 10^{-5}$ & 1.05 & 1.1 \\\hline
 $0.02$ & $2.5 \cdot 10^{-5}$ & $1.6 \cdot 10^{-5}$ & 0.59 & 0.6 \\\hline
 $0.004$ & $7.6 \cdot 10^{-6}$ & $7.7 \cdot 10^{-6}$ & 0.3 & 0.1 \\\hline
 \end{tabular}
   \caption{Norms of the residual}
  \label{conv-sim}
 \end{table}

{\bf Acknowledgements.} This work was supported by Deutsche Forschungsgemeinschaft (Grants No. OR 190/4--2 and OR 190/6--1).

\end{document}